\documentclass[11pt]{article}
\let\epsilon\varepsilon
\setcounter{tocdepth}{2}

\usepackage[british]{babel}
\usepackage{csquotes}
\usepackage[figuresright]{rotating}

\let\phi\varphi
%\usepackage{palatino}
%\usepackage{showframe}

%\usepackage{csquotes}
%\usepackage[backend=bibtex,maxbibnames=10,maxcitenames=4,maxalphanames=4,style=alphabetic,isbn=false]{biblatex}
%\usepackage{bibstyle-patch}
%\addbibresource{bibliography.bib}
%\appto\bibsetup{\raggedright}

\usepackage{amsmath,amsthm,amssymb}
\usepackage[hidelinks,draft=false]{hyperref}
\usepackage{booktabs}
\usepackage{nicefrac,xfrac}

\usepackage{geometry}
\usepackage{enumitem}
\usepackage[all]{xy}
\usepackage{subcaption}
\usepackage{mathtools}
\usepackage{xparse}
\usepackage{floatrow}
\usepackage{caption}
\usepackage{xparse}
\usepackage{float}
\usepackage{setspace}

% The folder in which images are stored for this project.
% If this is enabled, the folder doesn't need to be specified in each
% call to \includegraphics, i.e
%   \includegraphics{picturename}
% rather than
%   \includegraphics{img/picturename}
%\graphicspath{./img}

\usepackage{graphicx}

% Thmtools sets up theorem-like environments, and modifies the autoref
% command from hyperref to work when these environments share a counter
% Thmtools also defines an Autoref command for capitalising at the start
% of a sentence
\usepackage{thmtools}
\declaretheorem[
style=plain,
name=Theorem,
numberwithin=section
]{thm}
\declaretheorem[
style=plain,
name=Proposition,
numberlike=thm
]{prop}
\declaretheorem[
style=plain,
name=Lemma,
numberlike=thm
]{lem}
\declaretheorem[
style=plain,
name=Lemma,
unnumbered
]{lem*}
\newtheoremstyle{TheoremNum}
        {\topsep}{\topsep}              %%% space between body and thm
        {\itshape}                      %%% Thm body font
        {}                              %%% Indent amount (empty = no indent)
        {\bfseries}                     %%% Thm head font
        {.}                             %%% Punctuation after thm head
        { }                             %%% Space after thm head
        {\thmname{#1}\thmnote{ \bfseries #3}}%%% Thm head spec
    \theoremstyle{TheoremNum}
    \newtheorem{thmn}{Theorem}
\newtheoremstyle{PropNum}
        {\topsep}{\topsep}              %%% space between body and thm
        {\itshape}                      %%% Thm body font
        {}                              %%% Indent amount (empty = no indent)
        {\bfseries}                     %%% Thm head font
        {.}                             %%% Punctuation after thm head
        { }                             %%% Space after thm head
        {\thmname{#1}\thmnote{ \bfseries #3}}%%% Thm head spec
    \theoremstyle{PropNum}

    \newtheoremstyle{LemNum}
        {\topsep}{\topsep}              %%% space between body and thm
        {\itshape}                      %%% Thm body font
        {}                              %%% Indent amount (empty = no indent)
        {\bfseries}                     %%% Thm head font
        {.}                             %%% Punctuation after thm head
        { }                             %%% Space after thm head
        {\thmname{#1}\thmnote{ \bfseries #3}}%%% Thm head spec
    \theoremstyle{LemNum}
    \newtheorem{lemn}{Lemma}    
    
        \newtheoremstyle{CorNum}
        {\topsep}{\topsep}              %%% space between body and thm
        {\itshape}                      %%% Thm body font
        {}                              %%% Indent amount (empty = no indent)
        {\bfseries}                     %%% Thm head font
        {.}                             %%% Punctuation after thm head
        { }                             %%% Space after thm head
        {\thmname{#1}\thmnote{ \bfseries #3}}%%% Thm head spec
    \theoremstyle{CorNum}
    \newtheorem{corn}{Corollary}    
    
\declaretheorem[
style=plain,
name=Corollary,
numberlike=thm
]{cor}
\declaretheorem[
style=definition,
name=Definition,
numberlike=thm
]{defn}
    
        \newtheoremstyle{DefNum}
        {\topsep}{\topsep}              %%% space between body and thm
        {}                      %%% Thm body font
        {}                              %%% Indent amount (empty = no indent)
        {\bfseries}                     %%% Thm head font
        {.}                             %%% Punctuation after thm head
        { }                             %%% Space after thm head
        {\thmname{#1}\thmnote{ \bfseries #3}}%%% Thm head spec
    \theoremstyle{DefNum}
    \newtheorem{defnum}{Definition}

\declaretheorem[
style=definition,
name=Remark,
numberlike=thm
]{rem}

\declaretheorem[
style=plain,
name=The Mixed Littlewood Conjecture,
unnumbered
]{mconj}

\declaretheorem[
style=plain,
name=The $p$-adic Littlewood Conjecture,
unnumbered
]{pconj}

\numberwithin{equation}{section}

% Provide itemize without extra spacing
\newenvironment{itemize*}%
{\begin{itemize}%
	\setlength{\itemsep}{0pt}%
	\setlength{\parskip}{0pt}}%
{\end{itemize}}
\newenvironment{enumerate*}%
{\begin{enumerate}%
	\setlength{\itemsep}{0pt}%
	\setlength{\parskip}{0pt}}%
{\end{enumerate}}

% In align*, use this to number a particular line
% Rather than using align, and \nonumber-ing every other line

% Change autoref names. Generally I want sections to be capitalised at all
% times, not just when starting a sentence.
% (I'm not sure whether all these are necessary nor what the defaults are)

% Annoyingly these defs need to come *after* \begin{document}, so add to begin
% document hook.
\AtBeginDocument{%
}

\let\phi\varphi
\usepackage[backend=bibtex,maxbibnames=10,maxcitenames=4,maxalphanames=4,style=alphabetic,isbn=false]{biblatex}
\addbibresource{bibliography1.bib}
\appto\bibsetup{\raggedright}

\begin{document}
\title{\textbf{Infinite Loops and the $p$-adic Littlewood Conjecture.\\ \Large{Part I: Reformulating the $p$-adic Littlewood Conjecture in Terms of Infinite Loops.}}}
\author{John Blackman}
\date{\today}
\maketitle

\begin{abstract} In this paper we introduce the concept of an infinite loop mod $n$ and discuss the properties that these objects have. In particular, we show that a real number $\alpha$ is a counterexample to the $p$-adic Littlewood Conjecture if and only if  there exists some $m\in\mathbb{N}$ such that $p^k\alpha$ is an infinite loop mod $p^m$, for all $k\in\mathbb{N}$. This paper is the first of a two part series, which investigate the link between infinite loops and the $p$-adic Littlewood Conjecture.
\end{abstract}

\tableofcontents

\section{Introduction}

The $p$-adic Littlewood Conjecture (pLC) is a well-known unsolved problem in Diophantine approximation. The conjecture states that for every real number $\alpha\in\mathbb{R}$, we have the following equality:
$$m_p(\alpha):=\inf\limits_{q\in\mathbb{N}}\left\{q\cdot|q|_p\cdot\|q\alpha\|\right\}=0,$$
where $|\cdot|_p$ is the \textit{$p$-adic norm} and $\|\cdot\|$ is the \textit{distance to the nearest integer function}. Of course, since $|q|_p\leq{1}$ for all $q\in\mathbb{N}$, if $\alpha$ is not an element of the \textit{set of badly approximable numbers}:
$$\textbf{Bad}:=\left\{\alpha\in\mathbb{R}\mid c(\alpha)
:=\inf\limits_{q\in\mathbb{N}}q\cdot{\|q\alpha\|}>0\right\},$$ then $\alpha$ satisfies pLC. 

With this framework in mind and with a bit of work, it is then possible to show that pLC is true if and only if the \textit{set of multiplicatively badly approximable numbers}:
$$\textbf{Mad}(p):=\left\{\alpha\in\mathbb{R}\mid \inf\limits_{k\in\mathbb{N}\cup\{0\}}c(p^k\alpha)>0\right\}$$ is empty. 
One can then use the theory of continued fractions to see that:
$$\frac{1}{B(\alpha)+2}<c(\alpha)<\frac{1}{B(\alpha)},$$
where $B(\alpha):=\sup\limits_{k\in\mathbb{N}}\{a_k:\overline{\alpha}=[a_0;a_1,\ldots]\}$ is the \textit{height function} of $\alpha$. This allows us to deduce that $\alpha\in\mathbb{R}\setminus\mathbb{Q}$ satisfies pLC if and only if:
$$\sup\limits_{\ell\in\mathbb{N}\cup\{0\}}B(p^\ell\alpha)=\infty.$$ In particular, understanding the behaviour of continued fractions under integer multiplication is very closely related to understanding the $p$-adic Littlewood Conjecture.

\subsection{Main Results}

This paper provides a novel way of looking at the $p$-adic Littlewood Conjecture, by using infinite loops. We initially derive these objects using the geometric link between cutting sequences and continued fractions, however, they can also be described as real numbers which have continued fraction expansions that satisfy certain properties. In particular:

\begin{defnum}[\ref{infloopdef} (b)] An \textit{infinite loop} mod $n$ is any real number $\alpha\in\mathbb{R}_{>0}$ with no semi-convergent denominators which are  divisible by $n$ (other than $q_{-1}=0$). (See definition~\ref{semiconv} for the formally definition of a semi-convergent).
\end{defnum}

%The continued fraction expansions of infinite loops (mod $n$) behave very badly under multiplication by an integer $n$. In particular, if $\alpha$ is an infinite loop mod $n=p^k$, where $p^k$ is some prime power, then no information about the (semi-)convergents of $p^k\alpha$ can be deduced from the (semi-)convergents of $\alpha$ without explicit computation. On the other hand, if $\alpha$ is not an infinite loop mod $p^k$, then we can deduce information about the (semi-)convergents of $p^k\alpha$ using just the (semi-)convergents of $\alpha$. 

As we will discuss later, the continued fraction expansions of infinite loops (mod $n$) behave badly under multiplication by $n$. Since the behaviour of continued fraction expansions under integer multiplication is very closely tied to the $p$-adic Littlewood Conjecture, infinite loops intuitively seem like a good place to investigate for potential counterexamples to pLC. In fact, this intuition is correct and leads to the main theorem of this paper.

\begin{thmn}[\ref{theorem}]
Let $\alpha\in\textbf{Bad}$. Then
$\alpha$ satisfies pLC if and only if there is a sequence of natural numbers $\left\{\ell_m\right\}_{m\in\mathbb{N}}$ such that $p^{\ell_m}\alpha$ is not an infinite loop mod $p^m$.
\end{thmn}

This theorem comes from the combination of two smaller results. Firstly, we show that if $\alpha$ is not an infinite loop mod $n$, then the height function $B(\cdot)$ can not be small for both $\alpha$ and $n\alpha$ simultaneously.

\begin{lemn}[\ref{noloop}]
Assume that $\alpha\in\mathbb{R}_{>0}$ is not an infinite loop mod $n$. Then:
$$\max\left\{B(\alpha),B(n\alpha)\right\}\geq{\left\lfloor{2\sqrt{n}}\right\rfloor{-1}},$$
where $\lfloor\cdot\rfloor$ is the standard floor function.
\end{lemn}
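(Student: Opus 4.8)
The plan is to work directly from the semi-convergent characterisation of non-infinite-loops and to turn a semi-convergent of $\alpha$ whose denominator is divisible by $n$ into an unexpectedly good rational approximation of $n\alpha$. First I would record the trivial reductions: if $B(\alpha)=\infty$ or $B(n\alpha)=\infty$ the inequality is immediate, so assume both are finite (equivalently both $\alpha$ and $n\alpha$ are badly approximable). By Definition~\ref{infloopdef}(b), since $\alpha$ is not an infinite loop mod $n$ there is a semi-convergent $P/Q$ of $\alpha$ (in the sense of Definition~\ref{semiconv}) with $Q>0$ and $n\mid Q$; write $Q=nQ'$ with $Q'\in\mathbb{N}$. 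Writing this semi-convergent in the standard form $Q=jq_k+q_{k-1}$, $P=jp_k+p_{k-1}$ through the convergents of $\alpha$, and setting $s:=[a_{k+1};a_{k+2},a_{k+3},\dots]\in(a_{k+1},a_{k+1}+1)$ and $r:=q_{k-1}/q_k\in[0,1)$, the standard continued-fraction identities $|q_k\alpha-p_k|=\tfrac{1}{sq_k+q_{k-1}}$, $|q_{k-1}\alpha-p_{k-1}|=\tfrac{s}{sq_k+q_{k-1}}$ together with the fact that consecutive convergents straddle $\alpha$ give the clean formula $|Q\alpha-P|=\tfrac{s-j}{sq_k+q_{k-1}}$.

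Next I would transfer this to $n\alpha$. Since $n\mid Q$ we have $Q'(n\alpha)=Q\alpha$, so $P$ is an integer at distance $|Q\alpha-P|$ from $Q'(n\alpha)$, whence $\|Q'(n\alpha)\|\le|Q\alpha-P|$; as $Q'\ge 1$ is a legitimate denominator in the infimum defining $c(\cdot)$, this gives
$$c(n\alpha)\ \le\ Q'\,\|Q'(n\alpha)\|\ \le\ \frac{Q}{n}\cdot\frac{s-j}{sq_k+q_{k-1}}\ =\ \frac{(j+r)(s-j)}{n\,(s+r)}.$$
The key point — and the source of the factor $2$ in $2\sqrt n$, rather than the $\sqrt n$ one gets by estimating $\|Q'(n\alpha)\|$ and $Q'$ separately — is to keep this single product intact and apply AM–GM to the factors $j+r$ and $s-j$, whose sum is $s+r$: namely $(j+r)(s-j)\le\big(\tfrac{s+r}{2}\big)^2$, and hence $c(n\alpha)\le\tfrac{s+r}{4n}$.

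Finally I would feed in the elementary bounds $s<a_{k+1}+1\le B(\alpha)+1$ and $r<1$, so $s+r<B(\alpha)+2$, together with the inequality $c(n\alpha)>\tfrac{1}{B(n\alpha)+2}$ from the introduction, to obtain $4n<(B(\alpha)+2)(B(n\alpha)+2)\le\big(\max\{B(\alpha),B(n\alpha)\}+2\big)^2$. Taking square roots yields $\max\{B(\alpha),B(n\alpha)\}+2>2\sqrt n$, and since the left side is an integer this forces $\max\{B(\alpha),B(n\alpha)\}\ge\lfloor 2\sqrt n\rfloor-1$.

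I expect the main obstacle to be the second paragraph: identifying that the right quantity to bound is the product $Q'\|Q'(n\alpha)\|$ written as $\tfrac{(j+r)(s-j)}{n(s+r)}$, and that an AM–GM in the two variables $j+r$ and $s-j$ (not a separate estimate of numerator and denominator) is exactly what produces the optimal constant. Once the semi-convergent formula $|Q\alpha-P|=\tfrac{s-j}{sq_k+q_{k-1}}$ is set up correctly, the rest is bookkeeping; only minor care is needed for the degenerate case $k=0$ (where $r=0$) and for rational $\alpha$ (where $c(n\alpha)=0$ and the claim is either vacuous or immediate), neither of which affects the argument.
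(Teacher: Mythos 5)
Your argument is correct, and it is a genuinely different proof from the one in the paper. The paper works geometrically from Definition~\ref{infloopdef}(a): it applies $\phi^{-1}\in\Gamma_0(n)$ to bring the crossed edge to the imaginary axis, reads off fan sizes $r_\alpha,s_\alpha$ against $\mathcal{F}$ and $R_\alpha=\lfloor n/(r_\alpha+1)\rfloor$, $S_\alpha=\lfloor n/(s_\alpha+1)\rfloor$ against $\tfrac{1}{n}\mathcal{F}$, and then minimises $f(x)=\tfrac{n}{x+1}+\tfrac{n}{\lfloor 2\sqrt n\rfloor-x-1}$ by calculus, with a case split on the parity of $\lfloor 2\sqrt n\rfloor$. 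You instead work from Definition~\ref{infloopdef}(b) and stay entirely inside continued-fraction arithmetic: the identity $|Q\alpha-P|=\tfrac{s-j}{sq_k+q_{k-1}}$, the observation that $n\mid Q$ lets $Q'=Q/n$ serve as a test denominator for $c(n\alpha)$, and a single AM--GM on $(j+r)+(s-j)=s+r$. This buys the cleaner and strictly stronger intermediate statement
\[
4n<\bigl(B(\alpha)+2\bigr)\bigl(B(n\alpha)+2\bigr),
\]
from which the paper's bound follows by taking square roots and using integrality; it also makes transparent where the constant $2$ in $2\sqrt n$ comes from. The cost is that you lean on the equivalence of the two definitions of an infinite loop (Lemma~\ref{lem1} together with Corollaries~\ref{semiconvedge} and~\ref{edgesemiconv}) and on Corollary~\ref{uplowcor}, both of which the paper has already established, so nothing is circular. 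Two small points of hygiene: your claim $r=q_{k-1}/q_k\in[0,1)$ can fail when $k=1$ and $a_1=1$ (then $r=1$), but since $s<a_{k+1}+1$ is strict for irrational $\alpha$ you still get $s+r<B(\alpha)+2$ and the argument is unaffected; and the rational case is only ``immediate'' under the paper's convention that a rational number carries a terminal partial quotient of size $\infty$ (so $B=\infty$), which is exactly the convention the paper adopts in the remark following Definition~\ref{infloopdef}(b).
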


When we consider $n=p^m$ to be some prime power, this then leads an important corollary which effectively proves one direction of Theorem~\ref{theorem}:

\begin{corn}[\ref{infl}] If $\alpha\in\mathbb{R}_{>0}$ is not an infinite loop mod $p^m$, then:
\[ m_p(\alpha)\leq{\frac{1}{\left\lfloor{2\sqrt{p^m}}\right\rfloor{-1}}}.\] 
\end{corn}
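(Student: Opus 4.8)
The plan is to combine Lemma~\ref{noloop}, applied with $n=p^m$, with the two facts recalled in the introduction: the upper bound $c(\beta)<1/B(\beta)$ relating the approximation constant to the height function, and the observation that $m_p(\alpha)\le c(p^k\alpha)$ for every $k\in\mathbb{N}\cup\{0\}$. Only the latter is not already isolated above, so I would prove it first. Given $k\ge 0$ and $q'\in\mathbb{N}$, put $q=p^kq'$; then $|q|_p\le p^{-k}$, so $q\cdot|q|_p\le q'$, while $\|q\alpha\|=\|q'(p^k\alpha)\|$. Hence
\[
m_p(\alpha)=\inf_{q\in\mathbb{N}}q\cdot|q|_p\cdot\|q\alpha\|\ \le\ \inf_{q'\in\mathbb{N}}q'\cdot\|q'(p^k\alpha)\|\ =\ c(p^k\alpha),
\]
the middle infimum being over the subfamily of denominators of the form $p^kq'$.

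Next I would dispose of the degenerate cases. If $\alpha\in\mathbb{Q}$, then $\|q\alpha\|=0$ for $q$ the denominator of $\alpha$, so $m_p(\alpha)=0$ and the inequality is trivial; so assume $\alpha$ is irrational. Applying Lemma~\ref{noloop} with $n=p^m$ yields $\max\{B(\alpha),B(p^m\alpha)\}\ge\lfloor 2\sqrt{p^m}\rfloor-1$; write $\beta$ for whichever of $\alpha$, $p^m\alpha$ realises this, so $B(\beta)\ge\lfloor 2\sqrt{p^m}\rfloor-1$, and note $\lfloor 2\sqrt{p^m}\rfloor-1\ge 1$. If $B(\beta)=\infty$ then $\beta\notin\textbf{Bad}$, so $c(\beta)=0$; otherwise $c(\beta)<1/B(\beta)\le 1/(\lfloor 2\sqrt{p^m}\rfloor-1)$. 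Either way $c(\beta)\le 1/(\lfloor 2\sqrt{p^m}\rfloor-1)$, and combining with the first paragraph (taking $k=0$ if $\beta=\alpha$, or $k=m$ if $\beta=p^m\alpha$) gives
\[
m_p(\alpha)\ \le\ c(\beta)\ \le\ \frac{1}{\lfloor 2\sqrt{p^m}\rfloor-1},
\]
as required.

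There is no genuine obstacle here: the substantive input — that failing to be an infinite loop mod $n$ forces a large partial quotient in $\alpha$ or in $n\alpha$ — is precisely Lemma~\ref{noloop}, which we may assume, and the remaining steps are the elementary reduction $m_p(\alpha)\le c(p^k\alpha)$ and the dictionary between $c(\cdot)$ and $B(\cdot)$. The only point needing a little care is the bookkeeping over which of the two continued fractions is the large one, so that the correct instance of $m_p(\alpha)\le c(p^k\alpha)$ is invoked, together with the harmless edge cases $\alpha\in\mathbb{Q}$ and $B(\beta)=\infty$.
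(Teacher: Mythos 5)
Your argument is correct and follows essentially the same route as the paper: apply Lemma~\ref{noloop} with $n=p^m$, take whichever of $B(\alpha)$, $B(p^m\alpha)$ is large, and convert via $m_p(\alpha)\le c(p^k\alpha)<1/B(p^k\alpha)$ (the paper simply cites Corollary~\ref{uplow2} for this last chain rather than re-deriving $m_p(\alpha)\le c(p^k\alpha)$, and omits the rational and $B(\beta)=\infty$ edge cases that you handle explicitly).
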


Secondly, if there is some fixed $m\in\mathbb{N}$ such that $p^\ell\alpha$ is an infinite loop $p^m$ for all $\ell\in\mathbb{N}$, then we can guarantee that $B(\alpha)<p^m-4$. We can then apply this statement to $p^\ell\alpha$ - for each $\ell\in\mathbb{N}\cup\{0\}$ - to get the following result:

\begin{lemn}[\ref{count}]
Let $\alpha\in\textbf{Bad}$ and assume there exists an $m\in\mathbb{N}$ such that $p^\ell\alpha$ is an infinite loop mod $p^m$, for all $\ell\in\mathbb{N}\cup\left\{0\right\}$. Then $\alpha$ is a counterexample to pLC and $m_p(\alpha)\geq{\frac{1}{p^m-2}}$.
\end{lemn}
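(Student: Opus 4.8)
The plan is to bypass the qualitative claim and instead prove the quantitative bound $m_p(\alpha)\ge\frac{1}{p^m-2}$ outright; since $m_p$ is always non-negative, this simultaneously shows $m_p(\alpha)\neq 0$, i.e.\ exhibits $\alpha$ as a counterexample to pLC. The engine of the argument is the height estimate flagged in the discussion above: \emph{if $\beta\in\mathbb{R}_{>0}$ and $p^{\ell}\beta$ is an infinite loop mod $p^m$ for every $\ell\in\mathbb{N}$, then $B(\beta)<p^m-4$}. I expect establishing this estimate to be the substantive obstacle, as it is where the structural facts about how continued fraction expansions and their semi-convergent denominators transform under multiplication by $p$ get used: the heuristic is that a partial quotient of size at least $p^m-4$ occurring in the expansion of some $p^{\ell}\beta$ forces the associated block of semi-convergent denominators $q_{k-2}+t\,q_{k-1}$ (for $1\le t\le a_k$) to run through an arithmetic progression modulo $p^m$ long enough to meet $0$, contradicting the infinite-loop hypothesis for the relevant $p^{\ell'}\beta$. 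Granting this estimate, the remainder is bookkeeping.

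First I would observe that the hypothesis of the lemma is self-similar under multiplication by powers of $p$: for any fixed $j\in\mathbb{N}\cup\{0\}$ and any $\ell\in\mathbb{N}$ we have $p^{\ell}(p^{j}\alpha)=p^{j+\ell}\alpha$ with $j+\ell\in\mathbb{N}\cup\{0\}$, so $p^{\ell}(p^{j}\alpha)$ is again an infinite loop mod $p^m$ by assumption. Hence the height estimate above applies with $\beta=p^{j}\alpha$, yielding $B(p^{j}\alpha)<p^m-4$ for every $j\in\mathbb{N}\cup\{0\}$; in particular $\sup_{j\ge 0}B(p^{j}\alpha)<\infty$.

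Next, since $\alpha\in\textbf{Bad}$ we have $c(\alpha)>0$, so $\alpha$, and hence each $p^{j}\alpha$, is irrational; this lets us apply the inequality $c(\gamma)>\frac{1}{B(\gamma)+2}$ recorded in the introduction with $\gamma=p^{j}\alpha$. Combined with $B(p^{j}\alpha)<p^m-4$, this gives $c(p^{j}\alpha)>\frac{1}{B(p^{j}\alpha)+2}>\frac{1}{p^m-2}$ for every $j\ge 0$, and therefore $\inf_{j\ge 0}c(p^{j}\alpha)\ge\frac{1}{p^m-2}$.

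Finally I would pass from $c$ to $m_p$ by splitting the variable: given $q\in\mathbb{N}$, write $q=p^{v}r$ with $p\nmid r$, so that $|q|_p=p^{-v}$, whence $q\,|q|_p=r$ and $\|q\alpha\|=\|r\,(p^{v}\alpha)\|$; consequently $q\cdot|q|_p\cdot\|q\alpha\|=r\,\|r\,(p^{v}\alpha)\|\ge c(p^{v}\alpha)\ge\inf_{j\ge 0}c(p^{j}\alpha)$. Taking the infimum over $q\in\mathbb{N}$ gives $m_p(\alpha)\ge\inf_{j\ge 0}c(p^{j}\alpha)\ge\frac{1}{p^m-2}>0$, which proves both assertions of the lemma. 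Apart from the height estimate in the first paragraph — which is where the genuine work lies — every step here is elementary.
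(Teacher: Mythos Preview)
Your reduction matches the paper exactly: isolate the height estimate ``if every $p^j\beta$ ($j\ge 0$) is an infinite loop mod $p^m$ then $B(\beta)\le p^m-4$'', apply it with $\beta=p^j\alpha$ for each $j$, and combine $c(\gamma)>\tfrac{1}{B(\gamma)+2}$ with $m_p(\alpha)=\inf_{j\ge 0}c(p^j\alpha)$. The paper compresses the last two steps into a citation of Corollary~\ref{uplow2}, whereas you re-derive $m_p=\inf_j c(p^j\,\cdot\,)$ by writing $q=p^v r$; that difference is cosmetic. One small indexing point: as you phrase it the height estimate assumes only $\ell\in\mathbb{N}$, but its proof (the paper's Claim) uses that $\beta$ itself is an infinite loop, so $\ell=0$ should be included --- harmless in your application since $\beta=p^j\alpha$ is already an infinite loop by hypothesis.

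Where your sketch of the height estimate undershoots is in two places, and these are exactly the substance of the paper's Claim. First, the arithmetic-progression idea on the semi-convergent denominators of the fan with fixed vertex $\tfrac{p_k}{q_k}$ only runs through all residues mod $p^m$ when $\gcd(q_k,p)=1$; the paper isolates this as Case~I. When $p\mid q_k$, say $p^j\mid q_k$ but $p^{j+1}\nmid q_k$, one cannot argue directly on $\beta$: the paper's Case~II invokes Proposition~\ref{pro2} to show that $\tfrac{p_k}{q_k/p^j}$ is a convergent of $p^j\beta$ whose associated partial quotient is at least $p^j b_{k+1}$, and then applies Case~I to $p^j\beta$ --- this is the ``relevant $p^{\ell'}\beta$'' you gesture at, made concrete. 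Second, a progression of length $b_{k+1}$ with step coprime to $p^m$ is only forced to hit $0$ once $b_{k+1}\ge p^m$, so your heuristic as stated yields $b_{k+1}<p^m$, not $p^m-4$. The sharper constant comes from observing that not only the semi-convergents $n_s,\dots,n_t$ inside the fan but also the two flanking semi-convergents $n_{s-1}$ and $n_{t+1}$ (which belong to the adjacent fans) must avoid denominators $\equiv 0\pmod{p^m}$; this forces $s\ge 2$ and $t\le p^m-2$, whence $b_{k+1}=t-s\le p^m-4$.
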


This proves Theorem~\ref{theorem} in the other direction.

\subsection{Structure of the Paper}
This paper is the first of a two part series, which investigate the link between infinite loops and the $p$-adic Littlewood Conjecture. This paper is more theoretical in nature and looks at the main properties of infinite loops. The second paper \cite{part2} looks at improving known upper bounds to pLC. In particular, it uses the results in this paper to construct an algorithm that provides us with better upper bounds on $m_{PLC}(p):=\sup\limits_{\alpha\in\mathbb{R}}\{m_p(\alpha)\}$.

Section~2 of this paper provides context for the rest of the paper. We begin Section~\ref{CF} by giving a brief summary of the main results regarding continued fractions that we will use. More details about these topics can be found in \cite{Khin:1963,HW:1938,Bur:2000}. Section~\ref{bad} introduces the set of badly approximable numbers, partly to provide a certain amount of context to the $p$-adic Littlewood Conjecture and partly to allow us to make further statements about the $p$-adic Littlewood Conjecture. Section~\ref{mlc2} then formally introduces the mixed and $p$-adic Littlewood Conjectures, as well as reformulating the $p$-adic Littlewood Conjecture in terms of the height function $B(\alpha)$. For an excellent overview of recent results relating to the Littlewood-type problems see \cite{Bug:14}.

Section~\ref{CS} then introduces the notion of a cutting sequence and explains the relationship between cutting sequences and continued fractions. This connection between cutting sequences and continued fractions was first noted by Humbert \cite{Humbert:1916} and built upon by Series \cite{Series:1985,Series2:1985}. Most importantly, we discuss how if $\zeta_\alpha$ is a geodesic ray in $\mathbb{H}$ starting at the $y$-axis and terminating at a point $\alpha\in\mathbb{R}_{>0}$, then the cutting sequence of $(\zeta_\alpha,\mathcal{F})$ relative to the Farey tessellation is in some way equivalent to the continued fraction expansion of $\alpha$. 

In Section~\ref{Infloop}, we will introduce the concept of an infinite loop and motivate why these objects are important. We begin with Section~\ref{mult}, which is based on the author's previous work \cite{paper} and gives an overview of how replacing the Farey tessellation $\mathcal{F}$ with the \textit{$\frac{1}{n}$-scaled Farey tessellation} $\frac{1}{n}\mathcal{F}$ induces integer multiplication by $n$ on the corresponding continued fraction expansions. We then look at the common structure of $\mathcal{F}$ and $\frac{1}{n}\mathcal{F}$, with the intention of ascertaining more information about how integer multiplication affects continued fractions. This provides the motivation for why we look at infinite loops mod $n$ and allows us to discuss some of the more important properties of infinite loops. Most importantly, we show that for every $n\geq{4}$ there exist infinite loops mod $n$. Finally, in Section~\ref{ILpLC2} we introduce and prove the main results of this paper.

This paper was adapted from the author's Ph.D. thesis \cite{thesis}. 

\section{Context}
%
%\subsection{The Mixed and $p$-adic Littlewood Conjectures}

\subsection{\texorpdfstring{The Mixed and $p$-adic Littlewood Conjectures}{The Mixed and p-adic Littlewood Conjectures}}\label{mlc}
\subsubsection{Continued Fractions}\label{CF}
In this Section, we will give a brief overview of some properties of continued fractions. For a more in depth look at these topics, see~\cite{Khin:1963,HW:1938,Bur:2000}.

\begin{defn}
A (simple) \textit{continued fraction} $\overline{\alpha}$ is an expression of the form \begin{singlespace}
$$\overline{\alpha}:=a_0+\cfrac{1}{a_1+\cfrac{1}{a_2+\cfrac{1}{{\ldots}}}},$$
\end{singlespace}

\noindent
where $a_0\in\mathbb{Z}$ and $a_i\in\mathbb{N}$ for $i\geq1$.
\end{defn}

We will usually identify continued fractions with their sequence of $a_i$'s, $\overline{\alpha}=[a_0;a_1,\ldots]$ and refer to the $a_i$'s as \textit{partial quotients}. We note that explicit evaluation of the continued fraction expansion produces a real number $\alpha$. Similarly, for any real number $\alpha$ we can find an associated continued fraction expansion $\overline{\alpha}$ by using Euclid's algorithm. If $\alpha\in\mathbb{Q}$, then there are two equivalent continued fraction expansions which are both finite. These two continued fraction expansions will be of the form $[a_0;a_1,\ldots,a_m,1]$ and $[a_0;a_1,\ldots,a_m+1]$. If $\alpha\in\mathbb{R}\setminus\mathbb{Q}$, then there is a unique continued fraction expansion, which has infinitely many partial quotients. 

\begin{defn}Let $\overline\alpha=[a_0;a_1,a_2,\ldots]$ be a continued fraction. We define the \textit{$k$-th convergent} of $\overline{\alpha}$ to be $\frac{p_k}{q_k}:=[a_0;a_1,\ldots,a_k]$. 
We can define this iteratively where:
\begin{align*} p_{-1} &= 1   &p_0 &= a_0  &p_k &= a_kp_{k-1}+p_{k-2} \\
q_{-1} &= 0  &q_0 &= 1  &q_k &= a_kq_{k-1}+q_{k-2} 
\end{align*}
We refer to the term $p_k$ as the \textit{$k$-th convergent numerator} of $\alpha$ and $q_k$ as the \textit{$k$-th convergent denominator}.
\end{defn}

One nice property of the convergents $\frac{p_k}{q_k}$ of a real number $\alpha$ is that the convergents give very good rational approximations of $\alpha$ (especially compared to the size of their denominator). In particular, the following Theorem gives an idea of just how good these approximations are.

\begin{thm}\label{uplow}
Let $\alpha\in\mathbb{R}$ and let $\overline{\alpha}:=[a_0;a_1,\ldots]$ be the corresponding continued fraction expansion.
Then:
$$\frac{1}{(a_{k+1}+2)q_k^2}<\left|\alpha-\frac{p_k}{q_k}\right|<\frac{1}{a_{k+1}q_k^2}.$$
\end{thm}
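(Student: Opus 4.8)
The plan is to reduce everything to one exact formula for $\alpha-p_k/q_k$ and then estimate the single quantity appearing in its denominator. First I would introduce the \emph{complete quotient} $\alpha_{k+1}:=[a_{k+1};a_{k+2},\ldots]$, so that $\alpha=[a_0;a_1,\ldots,a_k,\alpha_{k+1}]$, and prove by induction on $k$ — using only the recurrences $p_k=a_kp_{k-1}+p_{k-2}$, $q_k=a_kq_{k-1}+q_{k-2}$ from the definition of the convergents — the identity
$$\alpha=\frac{\alpha_{k+1}p_k+p_{k-1}}{\alpha_{k+1}q_k+q_{k-1}}.$$
Alongside this I would record the determinant identity $p_kq_{k-1}-p_{k-1}q_k=(-1)^{k-1}$, which is again a one-line induction from the same recurrences.

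Next I would substitute the identity into $\alpha-p_k/q_k$ and clear denominators:
$$\alpha-\frac{p_k}{q_k}=\frac{(\alpha_{k+1}p_k+p_{k-1})q_k-p_k(\alpha_{k+1}q_k+q_{k-1})}{q_k(\alpha_{k+1}q_k+q_{k-1})}=\frac{p_{k-1}q_k-p_kq_{k-1}}{q_k(\alpha_{k+1}q_k+q_{k-1})},$$
so that, by the determinant identity and positivity of the denominator,
$$\left|\alpha-\frac{p_k}{q_k}\right|=\frac{1}{q_k\left(\alpha_{k+1}q_k+q_{k-1}\right)}.$$
It then remains only to bound $\alpha_{k+1}q_k+q_{k-1}$ from both sides. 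Since $\alpha_{k+1}=a_{k+1}+1/\alpha_{k+2}$ with $\alpha_{k+2}>1$, one has $a_{k+1}<\alpha_{k+1}<a_{k+1}+1$; and the $q$-recurrence gives $0=q_{-1}\leq q_{k-1}\leq q_k$. Hence $a_{k+1}q_k\leq a_{k+1}q_k+q_{k-1}<\alpha_{k+1}q_k+q_{k-1}<(a_{k+1}+1)q_k+q_k=(a_{k+1}+2)q_k$, and dividing $1$ by $q_k$ times these three quantities yields the stated pair of inequalities.

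The only point requiring care is the strictness of $a_{k+1}<\alpha_{k+1}<a_{k+1}+1$. For irrational $\alpha$ every complete quotient $\alpha_j=[a_j;a_{j+1},\ldots]$ is itself irrational with $\alpha_j>1$, so writing $\alpha_{k+1}=a_{k+1}+1/\alpha_{k+2}$ with $1<\alpha_{k+2}<\infty$ makes both inequalities strict and no exceptional cases arise; this is the version invoked later in the paper. For rational $\alpha$ the expansion is finite, $a_{k+1}$ is defined only for $k$ short of the last index, and the upper bound must be read as non-strict at the penultimate convergent (for instance $\tfrac32=[1;2]$ gives $|\tfrac32-1|=\tfrac12=\tfrac{1}{a_1q_0^2}$). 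Beyond this bookkeeping the argument is entirely mechanical, its substance being the closed formula for $|\alpha-p_k/q_k|$, so I do not anticipate a genuine obstacle.
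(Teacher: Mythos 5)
Your proof is correct and is the standard argument: the exact formula $\bigl|\alpha-\tfrac{p_k}{q_k}\bigr|=\tfrac{1}{q_k(\alpha_{k+1}q_k+q_{k-1})}$ combined with $a_{k+1}<\alpha_{k+1}<a_{k+1}+1$ and $0\leq q_{k-1}\leq q_k$. The paper states this theorem without proof, citing the classical references, and your argument is exactly the textbook derivation it implicitly relies on (your remark about the rational/terminating case is the right caveat for the statement as written).
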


For our purposes, it will be useful to also introduce a slightly weaker notion of a convergent, known as a \textit{semi-convergent} or \textit{secondary convergent}.

\begin{defn}\label{semiconv}
Let $\overline\alpha=[a_0;a_1,a_2,\ldots]$ be a continued fraction expansion of some real number $\alpha$. We define the \textit{$\left\{k,m\right\}$-th semi-convergent} of $\overline{\alpha}$ to be $\frac{p_{\left\{k,m\right\}}}{q_{\left\{k,m\right\}}}:=[a_0;a_1,\ldots,a_{k},m]$, where $0\leq{m}\leq{a_{k+1}}$. 
We can define this iteratively using the standard convergents: $$ p_{\left\{k,m\right\}} = mp_{k}+p_{k-1},$$ $$
q_{\left\{k,m\right\}}  = mq_{k}+q_{k-1}.
$$
We refer to the term $p_{\left\{k,m\right\}}$ as the \textit{$\left\{k,m\right\}$-th semi-convergent numerator} of $\alpha$ and $q_{\left\{k,m\right\}}$ as the \textit{$\left\{k,m\right\}$-th semi-convergent denominator}. 
\end{defn}

\subsubsection{The Badly Approximable Numbers}\label{bad}

In order to properly discuss the mixed and $p$-adic Littlewood conjectures, we will give a very brief overview of  \textit{set of badly approximable numbers} \textbf{Bad}. More information can be found in \cite{Bur:2000}.

A real number $\alpha$ is said to be \textit{badly approximable}, if there exists some constant $c>0$ such that for every rational number $\frac{p}{q}\in\mathbb{Q}$, we have:
$$\frac{c}{q^2}\leq\left|\alpha-\frac{p}{q}\right|.$$
In other words, no rational number $\frac{p}{q}$ gives an arbitrarily good approximation of $\alpha$ relative to the size of the denominator squared $q^2$. Of course, every rational number gives an arbitrarily ``good'' approximation of itself, and so all rational numbers are well approximable.

By rearranging the above equation and taking the infimum, we can define the function $c(\alpha)$:
$$c(\alpha):=\inf\limits_{\frac{p}{q}\in\mathbb{Q}}\left\{q^2\cdot\left|\alpha-\frac{p}{q}\right|\right\}.$$ 
It follows by definition, that $\alpha$ is badly approximable if and only if $c(\alpha)>0$. As a result, we can define the set of badly approximable numbers as: 
$$\textbf{Bad}:=\{\alpha\in\mathbb{R}:c(\alpha)>0\}.$$

If we define the \textit{distance to the nearest integer function} $\|\cdot\|:\mathbb{R}\to\big[0,\frac{1}{2}\big)$ to be the function given by: 
$$\|\alpha\|:=\min\limits_{n\in\mathbb{Z}}\{|\alpha-n|\},$$
then it follows that we can rewrite $c(\alpha)$ as: $$c(\alpha):=\inf\limits_{q\in\mathbb{N}}\{q\cdot\|q\alpha\|\}.$$ 

 We define the \textit{height function} $B(\alpha)$ to be the largest partial quotient in the corresponding continued fraction expansion $\overline{\alpha}:=[a_0;a_1,\ldots]$, excluding the initial partial quotient $a_0$. In particular,
$$B(\alpha):=\sup\limits_{k\in\mathbb{N}}\{a_k:\overline{\alpha}:=[a_0;a_1,\ldots]\}.$$ Then, by noting that $q\cdot\|q\alpha\|$ is minimised when $q$ is a convergent denominator of $\alpha$ and using Theorem~\ref{uplow}, one can deduce the following: 

\begin{cor}\label{uplowcor}
For every $\alpha\in\mathbb{R}\setminus{\mathbb{Q}}$,  we have:$$\frac{1}{B(\alpha)+2}<c(\alpha)<\frac{1}{B(\alpha)}.$$
\end{cor}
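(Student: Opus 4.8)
The plan is to reduce the infimum defining $c(\alpha)$ to an infimum taken only over convergent denominators, and then apply Theorem~\ref{uplow} term by term. Throughout I would assume $B(\alpha) < \infty$ (equivalently $\alpha\in\textbf{Bad}$, which is the case of interest); if $B(\alpha)=\infty$, then Theorem~\ref{uplow} forces a subsequence of $q_k^2|\alpha-p_k/q_k|$ to tend to $0$, so $c(\alpha)=0$ and the displayed chain degenerates. Write $\overline\alpha=[a_0;a_1,a_2,\dots]$ with convergents $p_k/q_k$ and complete quotients $\alpha_k':=[a_k;a_{k+1},\dots]$.

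First I would establish that
\[
c(\alpha)\;=\;\inf_{q\in\mathbb{N}}q\|q\alpha\|\;=\;\inf_{k\ge 0}q_k\|q_k\alpha\|\;=\;\inf_{k\ge 0}q_k^2\Big|\alpha-\tfrac{p_k}{q_k}\Big|.
\]
The middle equality is the law of best approximation: for $1\le q<q_{k+1}$ one has $\|q\alpha\|\ge\|q_k\alpha\|$, and since $q_0=1$ and $q_k\to\infty$ every $q\in\mathbb{N}$ lies in some interval $[q_k,q_{k+1})$, whence $q\|q\alpha\|\ge q_k\|q_k\alpha\|$. The last equality uses the standard fact that $p_k$ is a nearest integer to $q_k\alpha$, so $\|q_k\alpha\|=|q_k\alpha-p_k|=q_k|\alpha-p_k/q_k|$. (Alternatively one may simply quote the characterisation $c(\alpha)=\inf_k q_k^2|\alpha-p_k/q_k|$.)

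For the upper bound, Theorem~\ref{uplow} gives $q_k^2|\alpha-p_k/q_k|<1/a_{k+1}$ for every $k$; since $B(\alpha)$ is finite and each $a_i\in\mathbb{N}$, the supremum $B(\alpha)=\sup_k a_k$ is attained, say $a_{j+1}=B(\alpha)$, and then $c(\alpha)\le q_j^2|\alpha-p_j/q_j|<1/a_{j+1}=1/B(\alpha)$. For the lower bound, Theorem~\ref{uplow} gives $q_k^2|\alpha-p_k/q_k|>1/(a_{k+1}+2)\ge 1/(B(\alpha)+2)$ for every $k$, which only yields $c(\alpha)\ge 1/(B(\alpha)+2)$; to promote this to a strict inequality I would use the exact identity underlying Theorem~\ref{uplow}, namely $q_k^2|\alpha-p_k/q_k|=1/(\alpha_{k+1}'+q_{k-1}/q_k)$. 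Since $q_{k-1}/q_k<1$ and $\alpha_{k+1}'=a_{k+1}+1/\alpha_{k+2}'<B(\alpha)+\tfrac{B(\alpha)+1}{B(\alpha)+2}$ (using $\alpha_{k+2}'\ge 1+1/\alpha_{k+3}'$ and $\alpha_{k+3}'<B(\alpha)+1$), one gets $\alpha_{k+1}'+q_{k-1}/q_k<B(\alpha)+2-\tfrac{1}{B(\alpha)+2}$ uniformly in $k$, hence $c(\alpha)\ge\big(B(\alpha)+2-\tfrac{1}{B(\alpha)+2}\big)^{-1}>\tfrac{1}{B(\alpha)+2}$.

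The only genuine subtlety — the ``hard part'' of an otherwise routine argument — is this last point: Theorem~\ref{uplow} taken as a black box delivers only the non-strict bound $c(\alpha)\ge 1/(B(\alpha)+2)$, and upgrading to strictness requires either re-opening its proof to recover the exact value $1/(\alpha_{k+1}'+q_{k-1}/q_k)$, or an independent argument showing the terms $q_k^2|\alpha-p_k/q_k|$ are uniformly bounded away from $1/(B(\alpha)+2)$. Everything else — the best-approximation reduction, the identification of $\|q_k\alpha\|$ with $|q_k\alpha-p_k|$, and the attainment of the supremum defining $B(\alpha)$ — is standard continued-fraction bookkeeping.
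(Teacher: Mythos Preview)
Your proposal is correct and follows essentially the approach the paper sketches: the paper gives no formal proof, merely the sentence ``by noting that $q\cdot\|q\alpha\|$ is minimised when $q$ is a convergent denominator of $\alpha$ and using Theorem~\ref{uplow}, one can deduce the following,'' which is exactly your reduction-to-convergents followed by termwise application of Theorem~\ref{uplow}.

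Your additional care about strictness of the lower bound is well placed and goes beyond what the paper indicates. Taken literally, the combination of best-approximation and the black-box inequality from Theorem~\ref{uplow} yields only $c(\alpha)\ge 1/(B(\alpha)+2)$; your use of the exact identity $q_k^2|\alpha-p_k/q_k|=1/(\alpha'_{k+1}+q_{k-1}/q_k)$ together with the uniform estimate $\alpha'_{k+1}+q_{k-1}/q_k<B(\alpha)+2-\tfrac{1}{B(\alpha)+2}$ is a clean way to recover the strict inequality, and the arithmetic checks out. Your remark that the case $B(\alpha)=\infty$ degenerates is also appropriate, since the displayed inequality as stated cannot hold strictly when both bounds collapse to $0$.
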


In particular, one can also define the set of badly approximable numbers as:
$$\textbf{Bad}:=\{\alpha\in\mathbb{R}\setminus\mathbb{Q}:B(\alpha)<\infty\}$$
% The \textit{$k$-th convergent} is the rational number given by $$\frac{p_k}{q_k}:=[a_0;a_1,\ldots,a_k].$$
%
%\begin{thm}\emph{(\cite[Theorem 13 and ]{Khin:1963})}
%The $k$-th convergent of a real number $\alpha$ satisfies the following property:
%$$\frac{1}{(a_{k+1}+2)q_k^2}<\frac{1}{q_k(q_{k+1}+q_{k})}<\left|\alpha-\frac{p_k}{q_k}\right|<\frac{1}{q_kq_{k+1}}<\frac{1}{a_{k+1}q_k^2}\leq\frac{1}{q_k^2}.$$
%\end{thm}

\subsubsection{The Mixed and $p$-adic Littlewood Conjectures}\label{mlc2}

The \textit{mixed Littlewood Conjecture} (mLC) was first proposed by de Mathan and Teuli\'e in 2004, as a 1-dimensional analogue of the classical Littlewood Conjecture \cite{dMT:2004}. The main purpose of this conjecture was to gain further insight into the Littlewood Conjecture. However, this problem has proved very interesting in its own right, and whilst significant progress has been made, the conjecture remains open. More recently, the \textit{$t$-adic Littlewood Conjecture} - an analogue of pLC over function fields - was proven to be false for $\mathbb{F}_3$ \cite{ANL:2018}. This provides some credence to the notion that pLC (or mLC) may also be false.

In order to explicitly state the mixed (and $p$-adic) Littlewood Conjecture, we must first introduce some definitions. Let $\mathcal{C}=(c_k)_{k\in\mathbb{N}}$ be a sequence of integers with $c_k\geq{2}$ for all $k$. Then we set $d_0=1$ and $d_k=c_kd_{k-1}$ for all $k\in\mathbb{N}$, \textit{i.e.} $d_k=c_1\cdot{c_2}\cdot\ldots\cdot{c_k}$. We refer to any sequence $\mathcal{D}:=(d_k)_{k\in\mathbb{N}}$ which can be defined in this way as a \textit{pseudo-absolute sequence}. If we define $v_\mathcal{D}(q):=\sup\limits_{n\in\mathbb{N}}\left\{d_n:d_n\mid{q}\right\}$, then the $\mathcal{D}\textit{-adic norm}$ (or \textit{pseudo-absolute norm}) is given by:
 $$|q|_\mathcal{D}:=\frac{1}{v_\mathcal{D}(q)}.$$

The mixed Littlewood Conjecture is then stated as follows:

\begin{mconj}
For every real number $\alpha\in\mathbb{R}$ and every pseudo-absolute sequence $\mathcal{D}$, we have:
$$m_{\mathcal{D}}(\alpha):=\inf\limits_{q\in\mathbb{N}}\left\{q\cdot|q|_\mathcal{D}\cdot\|q\alpha\|\right\}=0.$$
\end{mconj} 

Here, we note that the function $m_\mathcal{D}(\alpha)$ looks remarkably similar to the function $$c(\alpha):=\inf\limits_{q\in\mathbb{N}}\{q\cdot\|q\alpha\|\}.$$ Of course, this is no coincidence and with a bit of work one can show that:
$$m_{\mathcal{D}}(\alpha)=\inf\limits_{k\in\mathbb{N}}\{c(d_k\alpha)\},$$
where $\mathcal{D}=\{d_k\}_{k\in\mathbb{N}}$ is a pseudo-absolute norm. It is for this reason, that the set of counterexamples to mLC are occasionally referred to as \textit{the set of multiplicatively badly approximable numbers}. We will denote this set  as: $$\textbf{Mad}(\mathcal{D}):=\left\{\alpha\in\mathbb{R}: \inf\limits_{k\in\mathbb{N}}\{c(d_k\alpha)\}>0\right\}.$$ 
 
When $\mathcal{C}$ is the constant sequence (\textit{i.e.} every $c_k=a$ for some $a\geq{2}$), then we will write $|\cdot|_a$ to mean $|\cdot|_\mathcal{D}$, where $\mathcal{D}$ is the corresponding pseudo-absolute sequence. In this case, we have $\mathcal{D}=\left\{1,a,a^2,a^3,\ldots\right\}$. When $a=p$ is a prime, the $\mathcal{D}$-adic norm $|\cdot|_p$ is just the standard \textit{$p$-adic norm}. For a fixed prime $p$, we obtain a specific case of the mixed Littlewood conjecture, known as the $p$\textit{-adic Littlewood Conjecture} (pLC). We state the conjecture as follows:

\begin{pconj}
For every real number $\alpha\in\mathbb{R}$, we have:
$$m_p(\alpha):=\inf\limits_{q\in\mathbb{N}}\left\{q\cdot|q|_p\cdot\|q\alpha\|\right\}=0.$$
\end{pconj}

Similar to the case of the mixed Littlewood conjecture, we can also rewrite the $p$-adic Littlewood conjecture in terms of the function $c(\alpha)$. More specifically, $$m_p(\alpha)=\inf\limits_{\ell\in\mathbb{N}\cup\{0\}}\{c(p^\ell\alpha)\}.$$

Here, we can use Corollary~\ref{uplowcor} to get nice bounds on $m_p(\alpha)$, as well as a reformulation of pLC:

\begin{cor}\label{uplow2}
Every real number $\alpha\in\mathbb{R}\setminus\mathbb{Q}$ satisfies the following inequality:
$$\inf_{\ell\in\mathbb{N}}\frac{1}{B(p^\ell\alpha)+2}<m_{p}(\alpha)<\inf_{\ell\in\mathbb{N}}\frac{1}{B(p^\ell\alpha)}.$$
In particular, if $\alpha\in\mathbb{R}\setminus\mathbb{Q}$, then $\alpha$ satisfies pLC if and only if:
$$\sup\limits_{\ell\in\mathbb{N}\cup\{0\}}B(p^\ell\alpha)=\infty.$$
\end{cor}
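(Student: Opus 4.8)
The plan is to deduce everything from Corollary~\ref{uplowcor} applied one factor at a time, together with the identity $m_p(\alpha)=\inf_{\ell\in\mathbb{N}\cup\{0\}}c(p^\ell\alpha)$ recorded immediately before the statement. In other words, the corollary is really just the ``infimum over $\ell$'' version of Corollary~\ref{uplowcor}, and the reformulation of pLC is then a one-line case distinction.

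First I would note that, since $\alpha$ is irrational and each $p^\ell$ is a nonzero integer, every $p^\ell\alpha$ is again irrational, so Corollary~\ref{uplowcor} legitimately applies to it and gives $\tfrac1{B(p^\ell\alpha)+2}<c(p^\ell\alpha)<\tfrac1{B(p^\ell\alpha)}$ for each $\ell$. Taking infima over $\ell$ across all three quantities and using $m_p(\alpha)=\inf_\ell c(p^\ell\alpha)$ yields the chain displayed in the statement. The only point needing a line of care is strictness: a strict term-wise inequality only survives as ``$\le$'' after passing to an infimum, so one argues directly — for instance $m_p(\alpha)\le c(p^\ell\alpha)<\tfrac1{B(p^\ell\alpha)}$ for all $\ell$ forces $m_p(\alpha)<\inf_\ell\tfrac1{B(p^\ell\alpha)}$ whenever that infimum is attained, which, since $B$ is integer-valued, is exactly the case $\sup_\ell B(p^\ell\alpha)<\infty$; in the complementary case all three quantities are $0$. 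The left-hand inequality is handled symmetrically.

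For the reformulation I would split on whether $M:=\sup_{\ell}B(p^\ell\alpha)$ is finite. If $M=\infty$, choose $\ell_k$ with $B(p^{\ell_k}\alpha)\to\infty$; then $c(p^{\ell_k}\alpha)<\tfrac1{B(p^{\ell_k}\alpha)}\to0$, so $m_p(\alpha)=0$ and pLC holds for $\alpha$. If $M<\infty$, then $c(p^\ell\alpha)>\tfrac1{B(p^\ell\alpha)+2}\ge\tfrac1{M+2}$ for every $\ell$, so $m_p(\alpha)\ge\tfrac1{M+2}>0$ and pLC fails for $\alpha$; combining the two directions gives the stated equivalence. There is no genuinely hard step here: the whole corollary is a term-wise invocation of Corollary~\ref{uplowcor} followed by taking infima, and the only thing to watch is the bookkeeping around strict versus non-strict inequalities (together with the degenerate case $m_p(\alpha)=0$), plus the trivial observation that irrationality is preserved under multiplication by $p^\ell$.
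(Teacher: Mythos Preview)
Your approach is exactly what the paper intends: Corollary~\ref{uplow2} is stated without proof in the paper, as an immediate consequence of Corollary~\ref{uplowcor} together with the identity $m_p(\alpha)=\inf_{\ell}c(p^\ell\alpha)$ recorded just before it, and your argument spells this out correctly. You are in fact more careful than the paper about the strictness of the displayed inequalities (noting that when $\sup_\ell B(p^\ell\alpha)=\infty$ all three quantities collapse to $0$, so the strict chain degenerates); this is a genuine, if harmless, imprecision in the paper's statement rather than a flaw in your reasoning, and the ``in particular'' reformulation --- which is what the rest of the paper actually uses --- is unaffected.
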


Finally, we should note that if $\alpha\in[0,1]$, then $\alpha$ satisfies pLC if and only if $\alpha+k$ satisfies pLC for all $k\in\mathbb{Z}$. This follows since: 
\begin{align*}
m_p(\alpha+k) &= \inf\limits_{q\in\mathbb{N}}\left\{q\cdot|q|_p\cdot\|q(\alpha+k)\|\right\}\\
&= \inf\limits_{q\in\mathbb{N}}\left\{q\cdot|q|_p\cdot\|q\alpha+qk\|\right\}\\
&=\inf\limits_{q\in\mathbb{N}}\left\{q\cdot|q|_p\cdot\|q\alpha\|\right\}\\
&= m_p(\alpha).
\end{align*}
As a result, we will typically only look at pLC for $\alpha\in\mathbb{R}_{\geq{}0}$ (or even $\alpha\in[0,1]$). However, our results hold for all $\alpha\in\mathbb{R}$ by extension.
\subsection{Continued Fractions and Cutting Sequences of Geodesic Rays} \label{CS}
In this section, we will introduce the notion of a cutting sequence and explain the connection between cutting sequences and continued fractions. We base this section on the work of Series \cite{Series:1985,Series2:1985}.

Throughout this paper we will work with the \textit{hyperbolic plane} $\mathbb{H}$. We will represent the hyperbolic plane by the upper half plane model $\mathbb{H}:=\{z\in\mathbb{C}\cup\{\infty\}:Im(z)\geq0\}$ with boundary $\partial\mathbb{H}=\mathbb{R}\cup\{\infty\}$. Geodesic lines are given by Euclidean half-lines of the form $\{a+iy:0\leq{y}\leq\infty\}$ and semicircles centred on $\partial{\mathbb{H}}$. 

We define a \textit{hyperbolic $n$-gon} $P$ to be the region enclosed by (and including) the \textit{edges} $l_1,\ldots,l_n$, where:
\begin{enumerate}
\item each $l_i$ is a geodesic segment,
\item consecutive edges $l_i$ and $l_{i+1}$  intersect only at a common endpoint $v_i$ and no other edges pass through $v_i$ - here, we treat $l_{n+1}$ as $l_1$, 
\item and the edges are otherwise pairwise disjoint, \textit{i.e.}:$$ 
l_i\cap{l_j}= \begin{cases} \emptyset &
\text{If } j\neq{i-1,i+1} \text{,} \\ v_{i-1} \text{ or } v_i &
\text{otherwise.}  
\end{cases}$$
\end{enumerate}  

Given two consecutive edges $l_i$ and $l_{i+1}$ in $P$, we refer to the common endpoint of these edges $v_i$ as a \textit{vertex} of $P$. A hyperbolic $n$-gon is \textit{ideal} if all of its vertices lie on the boundary of the hyperbolic plane  $\partial\mathbb{H}$. A \textit{tessellation} of $\mathbb{H}$ will be a collection of hyperbolic polygons $\mathcal{P}=\{\tau_i\}_{i\in\mathbb{N}}$ such that the collection of these polygons cover $\mathbb{H}$, \textit{i.e.} $\bigcup\limits_{i\in\mathbb{N}}\tau_i=\mathbb{H}$, and for any two polygons $\tau_j,\tau_k$ in $\mathcal{P}$ these polygons either do not intersect, \textit{i.e.} $\tau_j\cap\tau_k=\emptyset$, intersect only at a common vertex, \textit{i.e.} $\tau_j\cap\tau_j=z_i$, or intersect along a common edge, \textit{i.e.} $\tau_j\cap\tau_k=l_i$, where $l_i$ is an edge of both $\tau_1$ and $\tau_2$. If $E$ is an edge of a polygon $\tau\in\mathcal{P}$, we will say that $E$ is an edge of the tessellation $\mathcal{P}$. If these polygons in $\mathcal{P}$ are all ideal $3$-gons, then we refer to $\mathcal{P}$ as an \textit{ideal triangulation} of $\mathbb{H}$. 

%\begin{rem}
%We will only consider `nice' tessallations: we will assume that for every open neighbourhood $\nu$ in $\mathbb{H}$, there are only finitely many polygons in the tessellation which intersect $\nu$.
%\end{rem}

\subsubsection{Cutting Sequences}

Let $\zeta$ be an oriented geodesic ray which enters an ideal triangle $\bigtriangleup{ABC}$, labelled clockwise, through the edge $AB$. Then $\zeta$ can leave the triangle $\bigtriangleup{ABC}$ in one of three ways:
\begin{enumerate}
\item The geodesic $\zeta$ passes through the edge $BC$. This isolates the vertex $B$ (lying to the left of $\zeta$) from the vertices $A$ and $C$ (which lie to the right of $\zeta$). In this case, we say that $\zeta$ cuts $\bigtriangleup{ABC}$ to form a \textit{left triangle}. See Fig.~\ref{fans} (a).
\item The geodesic $\zeta$ passes through the edge $AC$. This isolates the vertex $A$ (lying to the right of $\zeta$) from the vertices $B$ and $C$ (which lie to the left of $\zeta$). In this case, we say that $\zeta$ cuts $\bigtriangleup{ABC}$ to form a \textit{right triangle}. See Fig.~\ref{fans} (b).
\item The geodesic terminates at the vertex $C$. Here, we refer to the vertex $C$ as the \textit{opposing vertex}.
\end{enumerate}

\begin{figure}[hbt]
        \centering
        \begin{subfigure}[b]{0.4\textwidth}
            \centering
            \includegraphics[width=\textwidth]{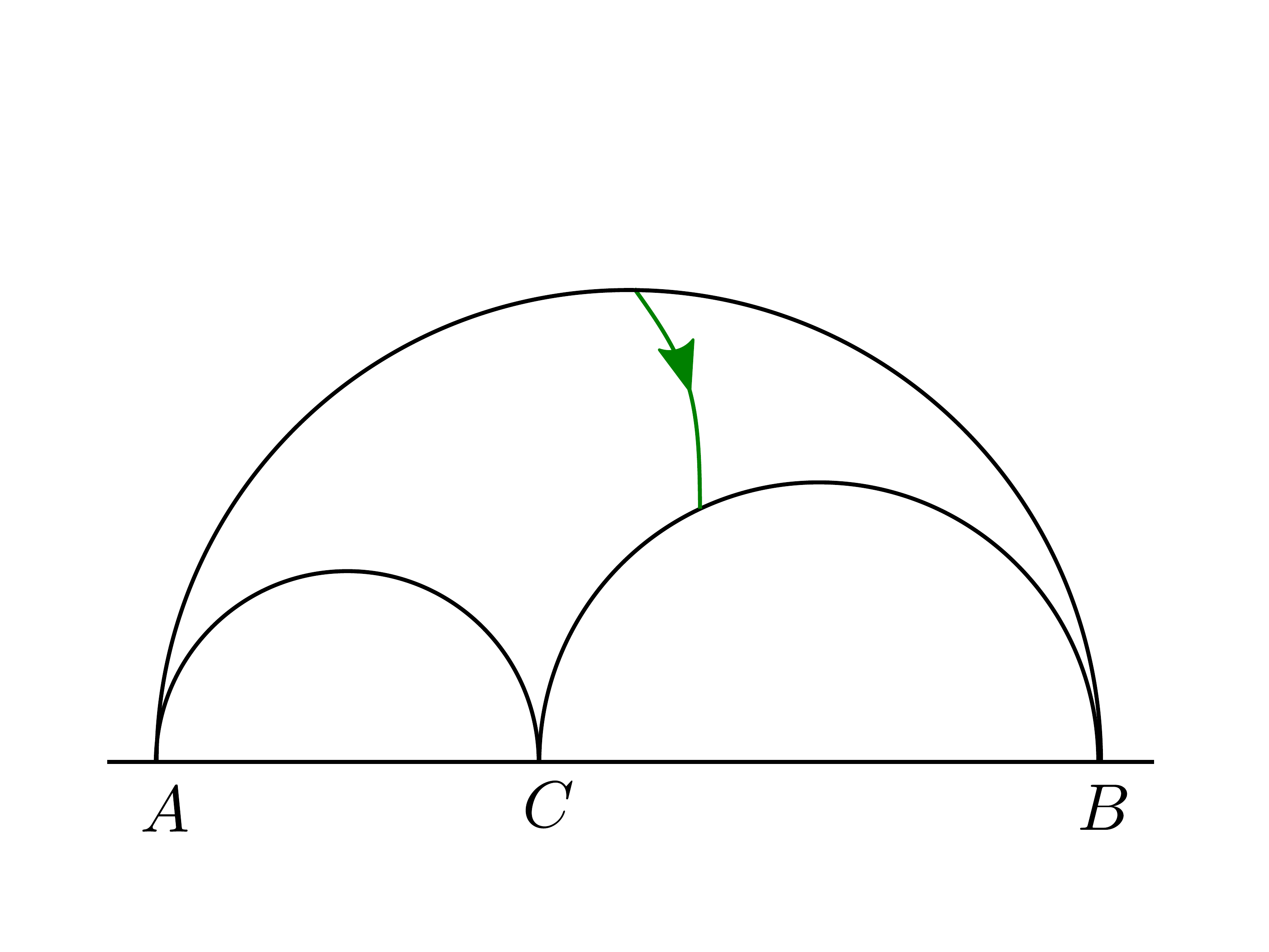}
            \caption{An example of a left triangle.}    
            
        \end{subfigure}
        \quad
        \begin{subfigure}[b]{0.4\textwidth}  
            \centering 
            \includegraphics[width=\textwidth]{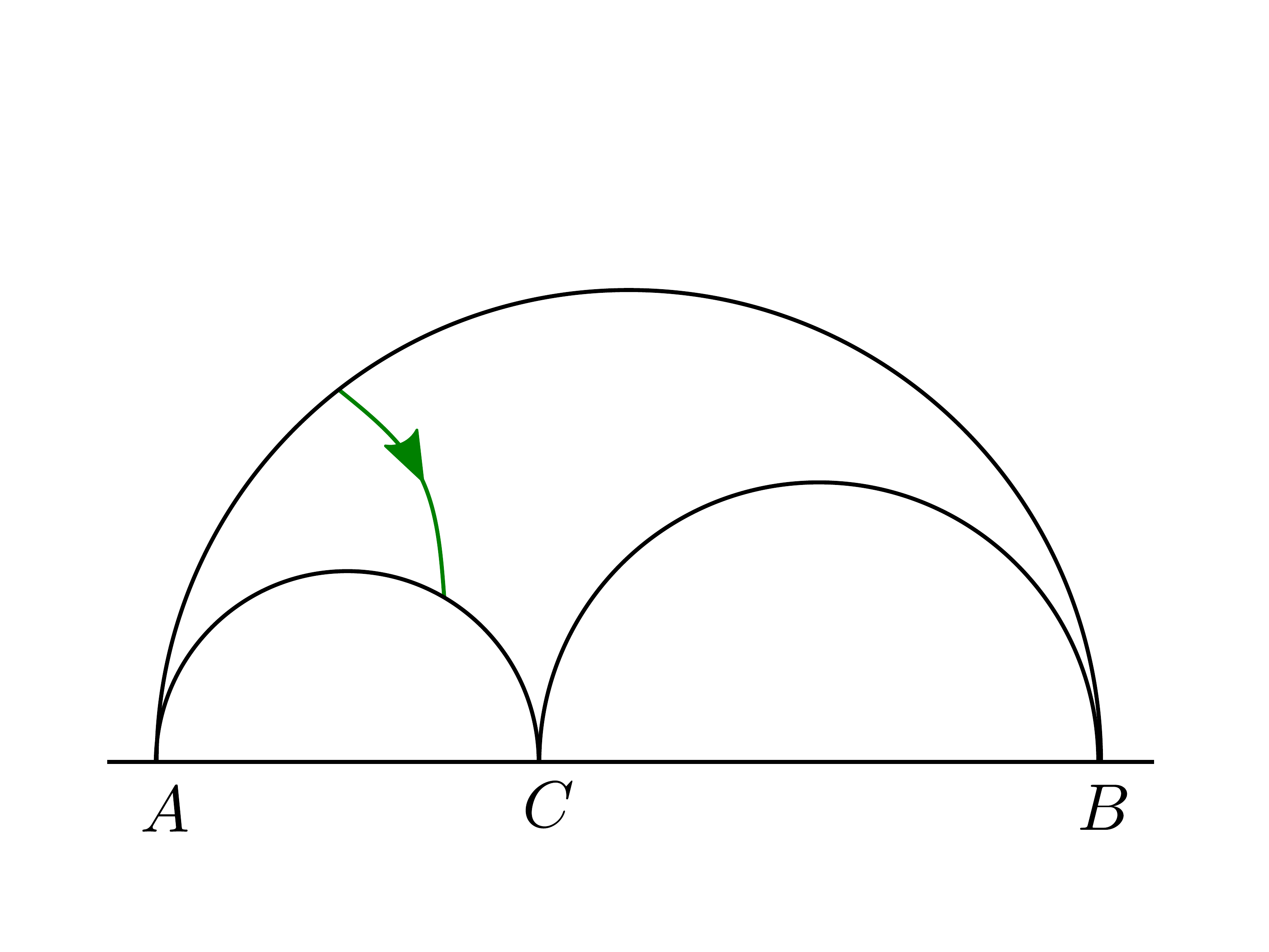}
            \caption{An example of a right triangle.}    
         
        \end{subfigure}
        \\
        \begin{subfigure}[b]{0.4\textwidth}   
            \centering 
            \includegraphics[width=\textwidth]{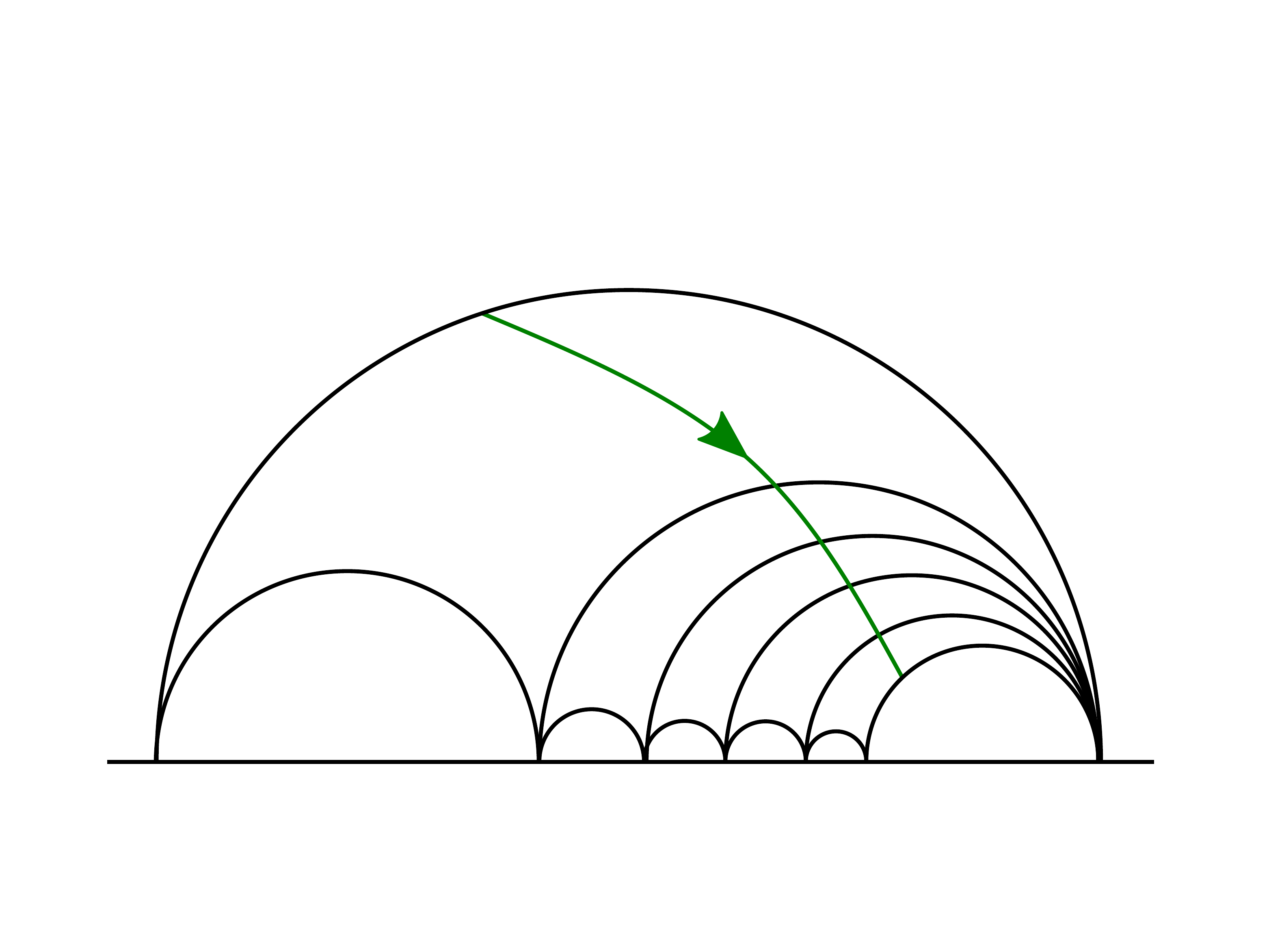}
            \caption{An example of a left fan.}    

        \end{subfigure}
        \quad
        \begin{subfigure}[b]{0.4\textwidth}   
            \centering 
            \includegraphics[width=\textwidth]{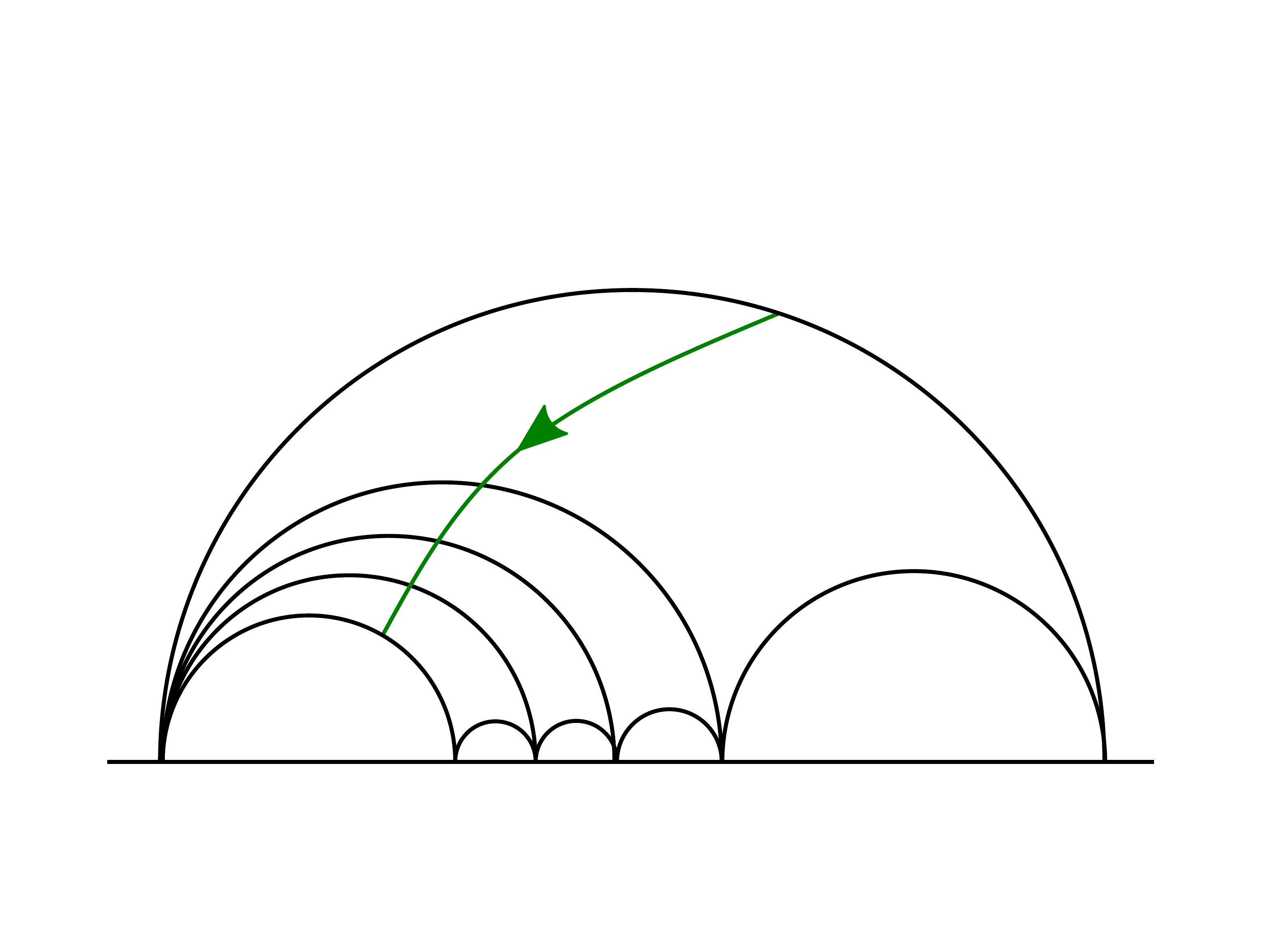}
            \caption{{An example of a right fan.}}    
       
        \end{subfigure}
        \caption{Examples of left and right triangles and fans.} 
        \label{fans}

    \end{figure}

Let $T$ be an ideal triangulation of $\mathbb{H}$ and let $\zeta$ be an oriented geodesic ray, starting at some edge $E$ of $T$ and terminating at some point $p\in\partial\mathbb{H}$ (where $p$ is not an endpoint of $E$).  We can then form an ordered collection $\{\tau_i\}_{i\in\mathbb{N}\cup\{0\}}$  of the all the triangles in $T$, which $\zeta$ non-trivially intersects, \textit{i.e.} $\zeta$ intersects the interior of each triangle $\tau_i$. For each triangle $\tau_i$, the geodesic ray $\zeta$ either cuts $\tau_i$ to form a left triangle, a right triangle, or terminates at the opposing vertex. If $\zeta$ intersects multiple left triangles in a row, then we refer to the collection of all these triangles as a \textit{left fan}. Similarly, if $\zeta$ intersects multiple right triangles in a row, then we refer to the collection of right triangles as a \textit{right fan}. See Fig.~\ref{fans} (c) and (d). If $\zeta$ passes through an opposing vertex of a triangle $\tau$, then we could think of this as $\zeta$ cutting $\tau$ to form either a left triangle or a right triangle - however, for the sake of uniqueness, we will always take this triangle to be a left triangle. If $\zeta$ terminates at an opposing vertex, then $\zeta$ does not intersect any more triangles in $T$. In particular, the collection of triangles that $\zeta$ intersects is finite if and only if $\zeta$ terminates at some opposing vertex.

Using these notions, we can define the \textit{cutting sequence} $(\zeta,T)$ of a geodesic ray $\zeta$ relative to a triangulation $T$, as follows:

\begin{defn}
Let $T$ be an ideal triangulation of $\mathbb{H}$, let $E$ be any edge of $T$ and let $\zeta$ be an oriented geodesic ray starting at $E$ and terminating at some point $p\in\partial{\mathbb{H}}$.  Also, let $\{\tau_i\}_{i\in\mathbb{N}\cup\{0\}}$ be the ordered collection of all triangles in $T$ which $\zeta$ non-trivially  intersects. Then, the $\textit{cutting sequence}$ of $\zeta$ with respect to $T$, denoted $(\zeta,T)$, is the (potentially) infinite word over the alphabet $\{L,R\}$, formed by the following algorithm:
\begin{enumerate}
\item Start with $i=0$ and $(\zeta,T)=\varepsilon$.
\item Repeat the following process until told to stop:
\begin{itemize}
\item If $\zeta$ cuts $\tau_i$ to form a  left triangle:
\begin{itemize}
\item Append the letter $L$ to $(\zeta,T)$.
\item Set $i=i+1$.
\end{itemize}
\item Else, if $\zeta$ cuts $\tau_i$ to form a  right triangle:
\begin{itemize}
\item Append the letter $R$ to $(\zeta,T)$.
\item Set $i=i+1$.
\end{itemize}
\item Else, $\zeta$ intersects the opposing vertex of $\tau_i$:
\begin{itemize}
\item Append $L$ to $(\zeta,T)$.
\item Stop.
\end{itemize}
\end{itemize}
\item End of algorithm.
\end{enumerate}
\end{defn}

We can write every cutting sequence $(\zeta,T)$ in the form $L^{n_0}R^{n_1}L^{n_2}\cdots$, where $n_0\in\mathbb{N}\cup\{0\}$ and $n_i\in\mathbb{N}$. Each index $n_i$ indicates the size of the $i$-th fan which $\zeta$ forms with $T$. We will abuse notation and also refer to the term $L^{n_i}$/$R^{n_i}$ in the cutting sequence as the \textit{$i$-th fan} of the cutting sequence  $(\zeta,T)$.

Since we can write each cutting sequence in the form $L^{n_0}R^{n_1}L^{n_2}\cdots$ for $n_0\in\mathbb{N}\cup\{0\}$ and $n_i\in\mathbb{N}$, there is an natural map $\eta$ between cutting sequences and continued fraction expansions of positive real numbers. This map converts each fan of size of $n_i$ into a partial quotient of size $n_i$. Explicitly, we have $\eta:L^{n_0}R^{n_1}L^{n_2}\cdots\mapsto[n_0;n_1,n_2,\ldots]$. If the cutting sequence is finite, then it maps to a finite continued fraction. If the cutting sequence is infinite, then it maps to an infinite continued fraction.

\begin{rem}If we have the cutting sequence $L^{n_0}R^{n_1}L^{n_2}\cdots{L^{n_k}L}$, then this would correspond to the continued fraction $[n_0;n_1,n_2,\ldots,n_k+1]$. In our convention, we will always take $L$ to be the final term. This ensures that the cutting sequence is formed in a unique way. However, we could have instead picked $R$ to be our final term, \textit{i.e.} $L^{n_0}R^{n_1}L^{n_2}\cdots{L^{n_k}R}$. This would correspond to the continued fraction $[n_0;n_1,n_2,\ldots,n_k,1]$. In particular, the choice of ending the cutting sequence with either  $L$ or $R$ is analogous to the choice of whether the continued fraction expansion is of the form $[n_0;n_1,n_2,\ldots,n_k+1]$ or $[n_0;n_1,n_2,\ldots,n_k,1]$.
\end{rem}

\subsubsection{The Farey Tessellation $\mathcal{F}$}\label{FT}
The Farey tessellation $\mathcal{F}$ is an ideal triangulation of the upper-half plane $\mathbb{H}$. The vertices are the set $\mathbb{Q}\cup\{\infty\}$. Two vertices $A$ and $B$ have a geodesic edge between them if once written in reduced form, $A=\frac{p}{q}$ and $B=\frac{r}{s}$, we have $\mid{ps-qr}\mid=1$. We will say that two vertices are \textit{neighbours}, if they have an edge between them. In this definition, we treat $\infty$ as $\frac{1}{0}$.

Given two vertices $A=\frac{p}{r}$ and $B=\frac{q}{s}$ in $\mathbb{Q}\cup\{\infty\}$, written in reduced form, we can define \textit{Farey addition} $\oplus$ and \textit{Farey subtraction} $\ominus$, as follows:
\[A\oplus{B} := \frac{p+r}{q+s}=\frac{r+p}{s+q}=:B\oplus{A} \]
\[A\ominus{B} := \frac{p-r}{q-s}=\frac{r-p}{s-q}=:B\ominus{A}\]

The first thing to note is that if $A=\frac{p}{q}$ and $B=\frac{r}{s}$  are neighbours in the Farey tessellation, \textit{i.e.} $\mid{ps-qr}\mid=1$, then the point $A\oplus{B}=\frac{p+r}{q+s}$ is a neighbour of both $A$ and $B$. The points $A$ and $A\oplus{B}$ are neighbours since:
$$ \mid{}p\cdot(q+s)-q\cdot(p+r)\mid{}=\mid{pq+ps-qp-qr\mid{}}=\mid{ps-qr}\mid=1,$$
and the points $B$ and $A\oplus{B}$ are neighbours since:
$$ \mid{}r\cdot(q+s)-s\cdot(p+r)\mid{}=\mid{rq+rs-ps-sr\mid{}}=\mid{-ps+qr}\mid=1.$$

As a result, the points $A,B$ and $A\oplus{B}$ each have a geodesic edge between them, and, therefore, form a triangle in $\mathcal{F}$. Similarly, if $A$ and $B$ are neighbours in the Farey tessellation, then the point $A\ominus{B}$ is also a neighbour of both $A$ and $B$ (and is not a neighbour of $A\oplus{B}$). 

If we start with the points $\frac{0}{1}$ and $\frac{1}{0}$, then we can generate all points in $\mathbb{Q}\cup\{\infty\}$ by using iterative Farey addition and Farey subtraction. See \cite{Series2:1985}. See Fig.~\ref{Conv} for a truncated picture of the Farey tessellation (in Section~\ref{CSFT}).

Given any point $z\in\mathbb{H}$ and any matrix $M=\begin{psmallmatrix} a & b \\ c& d\end{psmallmatrix}\in{PSL_2(\mathbb{R})}$, we can define the action of $M$ on each point $z\in\mathbb{H}$ as follows: $$M\cdot{z}:=\frac{az+b}{cz+d}.$$ The group $PSL_2(\mathbb{R})$ with action as defined above is isomorphic to the group of \textit{orientation preserving isometries} of $\mathbb{H}$, denoted $Isom^+(\mathbb{H})$. 

If we take $M=\begin{psmallmatrix} p & r \\ q & s\end{psmallmatrix}\in{PSL_2(\mathbb{Z})}<PSL_2(\mathbb{R})$, and we take the line $I$ between $0$ and $\infty$, then  the action of $M$ on $I$ maps $I$ to an edge between the points $M\cdot{0}=\frac{r}{s}$ and $M\cdot{\infty}=\frac{p}{q}$. Since $M\in{PSL_2(\mathbb{Z})}$, it follows that $\det(M)=ps-qr=1$. As a result, $M$ maps $I$ to an edge of $\mathcal{F}$. Alternatively, if $A=\frac{p}{q}$ and $B=\frac{r}{s}$ are neighbours in $\mathcal{F}$, then, since $\mid{ps-rq}\mid=1$, it follows trivially that either $\begin{psmallmatrix} p & r \\ q & s\end{psmallmatrix}$ or $\begin{psmallmatrix} p & -r \\ q & -s\end{psmallmatrix}$ is an element of $PSL_2(\mathbb{Z})$. This gives us the following proposition:

\begin{prop}\label{PSL}
Two points $A=\frac{p}{q}$ and $B=\frac{r}{s}$ are neighbours in $\mathcal{F}$ if and only if  either $\begin{psmallmatrix} p & r \\ q & s\end{psmallmatrix}$ or $\begin{psmallmatrix} p & -r \\ q & -s\end{psmallmatrix}$ is an element of $PSL_2(\mathbb{Z})$. 
\end{prop}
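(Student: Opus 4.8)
The plan is to reduce the whole statement to the determinant identity $\det\begin{psmallmatrix} p & r \\ q & s \end{psmallmatrix} = ps - rq$ together with the definition of a Farey neighbour and the convention, already used above, that an integer matrix represents an element of $PSL_2(\mathbb{Z})$ exactly when its determinant equals $1$. First I would record the standing hypotheses: writing $A = \tfrac{p}{q}$ and $B = \tfrac{r}{s}$ in reduced form (with $\infty$ read as $\tfrac{1}{0}$) guarantees that $p, q, r, s \in \mathbb{Z}$, so both arrays appearing in the statement have integer entries; it then only remains to track when their determinants are $1$.

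For the forward direction, suppose $A$ and $B$ are neighbours, i.e.\ $|ps - rq| = 1$. If $ps - rq = 1$, then $\begin{psmallmatrix} p & r \\ q & s \end{psmallmatrix}$ is an integer matrix of determinant $1$, hence an element of $PSL_2(\mathbb{Z})$. If instead $ps - rq = -1$, then $\det\begin{psmallmatrix} p & -r \\ q & -s \end{psmallmatrix} = p(-s) - (-r)q = -(ps - rq) = 1$, so the second array lies in $PSL_2(\mathbb{Z})$; one of the two cases always occurs. For the converse, if $\begin{psmallmatrix} p & r \\ q & s \end{psmallmatrix} \in PSL_2(\mathbb{Z})$ then $ps - rq = 1$, and if $\begin{psmallmatrix} p & -r \\ q & -s \end{psmallmatrix} \in PSL_2(\mathbb{Z})$ then $-(ps - rq) = 1$, i.e.\ $ps - rq = -1$; either way $|ps - rq| = 1$, which is precisely the condition for $A$ and $B$ to be neighbours in $\mathcal{F}$.

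Since the proposition is in essence a bookkeeping reformulation of the neighbour condition, there is no genuine obstacle here; the only point needing care is the sign ambiguity — handling the $\det = \pm 1$ dichotomy correctly and being explicit about what it means for an integer matrix to ``be an element of $PSL_2(\mathbb{Z})$'', so that the two matrices exhibited in the statement really do exhaust both signs. I would also recall, as the discussion preceding the proposition shows, that when $\det = 1$ the matrix $M = \begin{psmallmatrix} p & r \\ q & s \end{psmallmatrix}$ carries the geodesic from $0$ to $\infty$ onto the edge joining $M\cdot 0 = \tfrac{r}{s}$ to $M\cdot\infty = \tfrac{p}{q}$; this supplies the geometric meaning behind the determinant computation, though it is not logically required for the proof.
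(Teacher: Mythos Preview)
Your proof is correct and matches the paper's own argument, which is given in the paragraph immediately preceding the proposition: both directions are obtained by reading the neighbour condition $|ps-rq|=1$ as a determinant condition on the two displayed integer matrices, with the sign dichotomy handled exactly as you do. Your closing remark about $M$ carrying the edge $I$ to the edge between $M\cdot 0$ and $M\cdot\infty$ is also precisely the geometric observation the paper uses.
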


With the above proposition in mind, we can easily deduce that the set of edges of the Farey tessellation is equivalent to the set of edges $PSL_2(\mathbb{Z})\cdot{I}$, \textit{i.e.} the set of images of $I$ under the action of $PSL_2(\mathbb{Z})$. This allows us to deduce that $\mathcal{F}$ is \textit{preserved} under the action of $PSL_2(\mathbb{Z})$, \textit{i.e.} $M\cdot{\mathcal{F}}=\mathcal{F}$ for all $M\in{PSL_2(\mathbb{Z})}$. Furthermore, $PSL_2(\mathbb{Z})$ is the \textit{maximal orientation-preserving group} which preserves $\mathcal{F}$, \textit{i.e.} $M\cdot{\mathcal{F}}\neq\mathcal{F}$ for any $M\in{PSL_2(\mathbb{R})\setminus{PSL_2(\mathbb{Z})}}$. We write $Isom^+({\mathcal{F}})=PSL_2(\mathbb{Z})$ to indicate that $PSL_2(\mathbb{Z})$ is the {maximal orientation-preserving group} which preserves $\mathcal{F}$.

\subsubsection{Cutting Sequences and the Farey Tessellation}\label{CSFT}
   
The following theorem highlights the importance of the Farey tessellation with regards to continued fractions. Recall that $\eta$ is the map the converts cutting sequences into continued fractions expansions, \textit{i.e.} $\eta:L^{n_0}R^{n_1}\cdots\mapsto[n_0;n_1,\ldots]$.

\begin{thm}\label{thma}\emph{(\cite[Theorem A]{Series2:1985})} Let $\zeta$ be a geodesic in $\mathbb{H}$ with  endpoints $\alpha_1>0$ and $\alpha_2<0$, and let $I$ be the geodesic line between $0$ and $\infty$. Let $I_+$ be the region $\{z:Re(z)>0\}$ and $I_-$ be the region $\{z:Re(z)<0\}$. Then, for $\zeta^+=\zeta\cap{{I_+}}$ and $\zeta^-=\zeta\cap{{I_-}}$ (with implicit orientation), $\eta((\zeta^+,\mathcal{F}))$ is the continued fraction expansion of $\alpha_1$ and $\eta((\zeta^-,\mathcal{F}))$ is the continued fraction expansion of $\frac{-1}{\alpha_2}$.
\end{thm}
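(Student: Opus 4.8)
The plan is to reduce everything to the statement about $\zeta^{+}$. Write $\alpha:=\alpha_{1}>0$ with $\overline{\alpha}=[a_{0};a_{1},a_{2},\ldots]$. Because $\zeta$ has one positive and one negative endpoint it meets the edge $I$ (the geodesic of $\mathcal{F}$ joining $0$ and $\infty$) at a single interior point, so $\zeta^{+}$ is genuinely an oriented geodesic ray issuing from $I$, terminating at $\alpha\notin\{0,\infty\}$, and first entering the triangle of $\mathcal{F}$ with vertices $0,\infty,1$ (the unique triangle of $\mathcal{F}$ on the side $I_{+}$). The basic mechanism is that any $M\in PSL_{2}(\mathbb{Z})$ fixes $\mathcal{F}$ (Proposition~\ref{PSL}) and, acting as an orientation-preserving isometry, preserves the $L/R$-labelling of triangles, so $M$ carries the cutting sequence of a ray relative to $\mathcal{F}$ to the cutting sequence of its image, with no change. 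Applying this to $S\colon z\mapsto -1/z=\begin{psmallmatrix}0&-1\\1&0\end{psmallmatrix}\in PSL_{2}(\mathbb{Z})$, which fixes $I$, interchanges $I_{+}$ and $I_{-}$, and sends $\alpha_{2}$ to $-1/\alpha_{2}>0$, shows that $S(\zeta^{-})$ is a ray of exactly the form of $\zeta^{+}$ but terminating at $-1/\alpha_{2}$, and with the same cutting sequence as $\zeta^{-}$; so it is enough to prove $\eta((\zeta^{+},\mathcal{F}))=\overline{\alpha}$, and I would do this by peeling off the first fan, renormalising by an element of $PSL_{2}(\mathbb{Z})$, and recursing.

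First I would identify the opening fan of $(\zeta^{+},\mathcal{F})$. Tracking the edges of $\mathcal{F}$ that $\zeta^{+}$ crosses, one checks that so long as $\alpha$ exceeds the current integer the ray leaves the triangle with vertices $k,\infty,k+1$ through the edge $[k+1,\infty]$ --- that is, it keeps ``turning at the vertex $\infty$'' --- and that this happens exactly for $k=0,1,\ldots,a_{0}-1$; after that, inside the triangle with vertices $a_{0},\infty,a_{0}+1$, the ray exits through $[a_{0},a_{0}+1]$, so the fan centre switches to the vertex $a_{0}$. Comparing with the clockwise-labelling rule defining $L$ and $R$ triangles, the opening run is $L^{a_{0}}$ --- empty precisely when $0<\alpha<1$, in which case $a_{0}=0$ and the cutting sequence instead opens with an $R$. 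After this run, $\zeta^{+}$ has entered the triangle with vertices $a_{0},\infty,a_{0}+1$ through the edge $[a_{0},\infty]$.

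Next, $z\mapsto z-a_{0}$ lies in $PSL_{2}(\mathbb{Z})$ and carries $[a_{0},\infty]$ to $I$, the triangle with vertices $a_{0},\infty,a_{0}+1$ to the triangle with vertices $0,\infty,1$, and $\alpha$ to $\alpha-a_{0}=[0;a_{1},a_{2},\ldots]\in(0,1)$; hence the tail of $(\zeta^{+},\mathcal{F})$ after $L^{a_{0}}$ equals the cutting sequence of the standard ray to $[0;a_{1},a_{2},\ldots]$. A symmetric computation handles a standard ray to a point $\gamma=[0;c_{1},c_{2},\ldots]\in(0,1)$: its cutting sequence opens with $R^{c_{1}}$, after which the ray enters the triangle with vertices $0,\tfrac{1}{c_{1}},\tfrac{1}{c_{1}+1}$ through $[0,\tfrac{1}{c_{1}}]$, and $\begin{psmallmatrix}1&0\\-c_{1}&1\end{psmallmatrix}\in PSL_{2}(\mathbb{Z})$ renormalises the tail to the standard ray to $[c_{2};c_{3},\ldots]>1$. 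Alternating the two reductions consumes one partial quotient at a time, so by induction on $k$ the first $k$ maximal runs of $(\zeta^{+},\mathcal{F})$ have lengths $a_{0},a_{1},\ldots,a_{k-1}$. When $\alpha$ is irrational the cutting sequence is infinite and letting $k\to\infty$ gives $\eta((\zeta^{+},\mathcal{F}))=\overline{\alpha}$; when $\alpha$ is rational the recursion halts at the step where the ray meets an opposing vertex (the base cases being a ray to a positive integer, resp.\ to the reciprocal of one, which one checks directly), and the convention of closing the cutting sequence with $L$ matches the finite continued fraction, exactly as in the Remark following the definition of $\eta$.

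The step I expect to be the main obstacle is the orientation bookkeeping in the opening-fan analysis: reading off the clockwise vertex-labelling of each triangle correctly, confirming that the isolated vertex really is the fan centre and lies to the left throughout the $\infty$-fan (so that the run is $L^{a_{0}}$ and agrees with the convention for $\eta$), and treating the degenerate case $a_{0}=0$ separately. Closely tied to this is making sure each renormalising matrix is chosen with the right sign, so that it is orientation-preserving and does not accidentally interchange $L$ and $R$, and phrasing the irrational case as a statement about every finite prefix of the cutting sequence rather than trying to induct directly on an infinite word.
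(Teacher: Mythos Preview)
The paper does not prove this theorem at all: it is stated with attribution to Series \cite[Theorem~A]{Series2:1985} and used as a black box, so there is no ``paper's own proof'' to compare against. Your proposal therefore goes well beyond what the paper does.

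That said, your argument is essentially the standard one and is sound. The reduction of $\zeta^{-}$ to the $\zeta^{+}$ case via $S\colon z\mapsto -1/z$ is exactly right, and the core inductive step---peel off the first fan, renormalise by an element of $PSL_{2}(\mathbb{Z})$ that carries the exit edge back to $I$, and observe that orientation-preserving isometries of $\mathcal{F}$ preserve the $L/R$ labelling---is the heart of Series's proof as well. Your choice of renormalising matrices is correct: $z\mapsto z-a_{0}$ sends $[a_{0},\infty]$ to $I$ and $\alpha$ to $[0;a_{1},a_{2},\ldots]$, and $\begin{psmallmatrix}1&0\\-c_{1}&1\end{psmallmatrix}$ sends $[0,\tfrac{1}{c_{1}}]$ to $I$ and $[0;c_{1},c_{2},\ldots]$ to $[c_{2};c_{3},\ldots]$. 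Your self-diagnosis of the delicate point is also accurate: the only place one can slip is in matching the clockwise-labelling convention to the claim that the $\infty$-centred fan reads as $L^{a_{0}}$ rather than $R^{a_{0}}$, and in handling $a_{0}=0$ cleanly. Once that bookkeeping is done carefully for the first two steps, the induction carries the rest.
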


The main point we take away from the above theorem is the following: if $\zeta_\alpha$ is a geodesic ray starting at the the $y$-axis $I$ and terminating at the the point $\alpha\in\mathbb{R}_{>0}$, then $\eta((\zeta_\alpha,\mathcal{F}))=\overline{\alpha}$. As a result, we can identify the real number $\alpha\in\mathbb{R}_{>0}$ with any geodesic ray $\zeta_\alpha$ starting at $I$ and terminating at the point $\alpha$, and the cutting sequence $(\zeta_\alpha,\mathcal{F})$ is equivalent to the continued fraction expansion $\overline\alpha$. However, this is not the only connection between the cutting sequence of a geodesic ray $\zeta_\alpha$ with the Farey tessellation and the continued fraction expansion $\overline{\alpha}$.

\begin{figure}[htb]
\centering
  \includegraphics[width=0.8\linewidth]{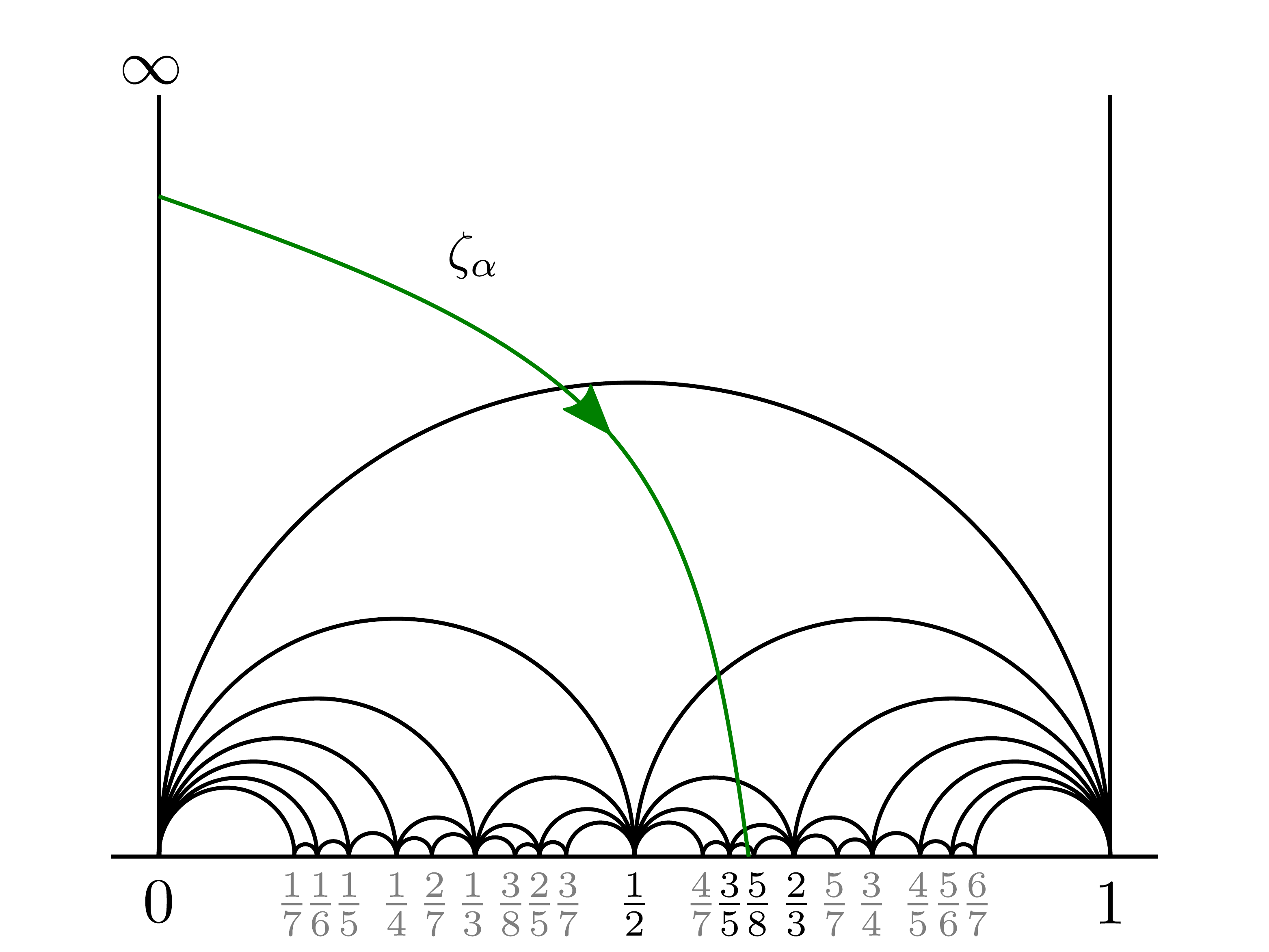} 
  \caption{An image of a geodesic ray $\zeta_{\alpha}$ intersecting the Farey tessellation $\mathcal{F}$ with (some of the) convergents shown in bold. The endpoint of $\zeta_\alpha$ is $\alpha=\frac{\sqrt{5}-1}{2}$. The convergents are $\infty,0,1,\frac{1}{2},\frac{2}{3},\frac{3}{5},\frac{5}{8},\ldots$.}
  \label{Conv}
\end{figure}

\begin{cor}\label{semiconvedge}
Let $\alpha\in\mathbb{R}_{>0}$.
If $A=\frac{p_k}{q_k}$ is the $k$-th convergent of $\alpha$ and $B=\frac{p_{\{k,m\}}}{q_{\{k,m\}}}:=\frac{mp_k+p_{k-1}}{mq_k+q_{k_1}}$ is a $\{k,m\}$-th semi-convergent, then there is an edge $E$ between $A$ and $B$ in $\mathcal{F}$. Moreover, the corresponding geodesic ray $\zeta_\alpha$ intersects $E$.
\end{cor}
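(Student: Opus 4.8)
The plan is to establish two things: that $A$ and $B$ are neighbours in $\mathcal{F}$, so that the edge $E$ exists, and that the geodesic ray $\zeta_\alpha$ crosses it. For the first, I would invoke the standard determinant identity $p_kq_{k-1}-p_{k-1}q_k=(-1)^{k-1}$ for convergents. Writing $B=\frac{mp_k+p_{k-1}}{mq_k+q_{k-1}}$, this gives
\[
\bigl|p_k(mq_k+q_{k-1})-q_k(mp_k+p_{k-1})\bigr|=\bigl|p_kq_{k-1}-p_{k-1}q_k\bigr|=1 .
\]
The same identity yields $\gcd(p_k,q_k)=1$, and any common divisor of $mp_k+p_{k-1}$ and $mq_k+q_{k-1}$ divides $q_k(mp_k+p_{k-1})-p_k(mq_k+q_{k-1})=\pm1$, so both $A$ and $B$ are in reduced form (with the convention $\infty=1/0$ when the relevant denominator vanishes). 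By the definition of $\mathcal{F}$, equivalently Proposition~\ref{PSL}, there is an edge $E$ of $\mathcal{F}$ between $A$ and $B$, and the displayed computation forces $A\ne B$.

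For the second part, the combinatorial input I would use is the behaviour of the semi-convergents $\frac{p_{\{k,m\}}}{q_{\{k,m\}}}$ for $0\le m\le a_{k+1}$: a short computation shows consecutive ones differ by $\frac{(-1)^{k}}{q_{\{k,m\}}q_{\{k,m+1\}}}$, so they are monotonic in $m$, running from the $(k-1)$-th convergent at $m=0$ to the $(k+1)$-th convergent at $m=a_{k+1}$. Since consecutive convergents of $\alpha$ straddle $\alpha$ and $\frac{p_{k+1}}{q_{k+1}}$ lies on the same side of $\alpha$ as $\frac{p_{k-1}}{q_{k-1}}$, the whole closed interval between $\frac{p_{k-1}}{q_{k-1}}$ and $\frac{p_{k+1}}{q_{k+1}}$ — in particular $B$ — lies strictly on the opposite side of $\alpha$ from $A=\frac{p_k}{q_k}$, while $A$ lies strictly on the other side; hence $\alpha$ lies strictly between $A$ and $B$ on the real line. (The one exception is the degenerate rational case in which a convergent or semi-convergent of $\alpha$ equals $\alpha$; there $\zeta_\alpha$ terminates at an endpoint of $E$ and the claim is immediate.)

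The final step is hyperbolic geometry. By Theorem~\ref{thma}, $\zeta_\alpha$ is the sub-ray $\zeta^+$ of a complete geodesic $\zeta$ with endpoints $\alpha>0$ and some $\alpha_2<0$. Each convergent and semi-convergent of $\alpha$ is a non-negative rational or $\infty$, so $\alpha_2<0\le\min(A,B)<\alpha<\max(A,B)$; thus $\{A,B\}$ separates $\{\alpha,\alpha_2\}$ on the circle $\partial\mathbb{H}=\mathbb{R}\cup\{\infty\}$, which means $\zeta$ and $E$ cross at a point of $\mathbb{H}$. Finally, unless $E=I$ — the case $\{A,B\}=\{0,\infty\}$, where the claim is trivial since $\zeta_\alpha$ starts on $I$ — the part of $E$ lying in $\mathbb{H}$ has $Re(z)>0$, because both endpoints of $E$ are non-negative; hence the crossing point lies on $\zeta^+=\zeta_\alpha$, as required.

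I expect the real content to be the monotonicity of the semi-convergents together with the linking criterion for when two geodesics cross; the chief nuisance is bookkeeping the boundary cases — $k=-1$ (so $A=\infty$), $a_0=0$ (so some convergents vanish and $E$ may be vertical or equal to $I$), and $m\in\{0,a_{k+1}\}$ (so $B$ is itself a convergent) — though in each the inequalities survive in the appropriate weak or limiting form. A cleaner alternative, better suited to the cutting-sequence framework already in place, is to argue directly that the fan of $(\zeta_\alpha,\mathcal{F})$ pivoting at the $k$-th convergent has precisely the semi-convergents $\frac{p_{\{k,m\}}}{q_{\{k,m\}}}$ as its non-pivot vertices, so that $\zeta_\alpha$ crosses each edge joining the pivot to one of them.
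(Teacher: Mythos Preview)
Your argument is correct and follows essentially the same route as the paper: the determinant identity $p_kq_{\{k,m\}}-p_{\{k,m\}}q_k=(-1)^{k-1}$ gives the edge $E$, then one shows $\alpha$ lies strictly between $A$ and $B$ (the paper simply cites \cite[Theorem~167]{HW:1938} rather than deriving the monotonicity of semi-convergents), and finally a separation argument forces $\zeta_\alpha$ to cross $E$. The only cosmetic difference is that the paper phrases the last step as ``$E$ separates $I$ from the interval $(\min(A,B),\max(A,B))$'' rather than your linked-pairs formulation on $\partial\mathbb{H}$; your cutting-sequence alternative at the end is precisely what the paper does in the companion Corollary~\ref{edgesemiconv}. (Minor typo: your displayed difference of consecutive semi-convergents should carry $(-1)^{k-1}$, not $(-1)^k$, but this does not affect the monotonicity claim.)
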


\begin{proof}
The first thing to note is that if $$\frac{p_{\{k,m\}}}{q_{\{k,m\}}}:=\frac{mp_k+p_{k-1}}{mq_k+q_{k_1}},$$ for some $m\in\mathbb{N}$, then by extension of a classical theorem of continued fractions, we have:
$$p_{\{k,m\}}q_k-p_kq_{\{k,m\}}=(-1)^{k}.$$
See \cite[Theorem~150]{HW:1938}.

In particular, $\frac{p_{\{k,m\}}}{q_{\{k,m\}}}$ and $\frac{p_k}{q_k}$ are neighbours in $\mathcal{F}$ and connected by some edge $E$. If $E=I$, then since is defined to $\zeta_\alpha$ start at $I$, it follows trivially that $\zeta_\alpha$ intersects $I$. We will assume that $E\neq{I}$. Similarly, if $\alpha=\frac{p_{\{k,m\}}}{q_{\{k,m\}}}$ or $\alpha=\frac{p_k}{q_k}$, then the corresponding geodesic ray trivially intersects $E$ (\textit{i.e.} at an endpoint).

Otherwise by extension of \cite[Theorem~167]{HW:1938}, the point $\alpha$ lies between the points $\frac{p_{\{k,m\}}}{q_{\{k,m\}}}$ and $\frac{p_k}{q_k}$, \textit{i.e.}: $$\alpha\in\left(\min\left\{\frac{p_{\{k,m\}}}{q_{\{k,m\}}},\frac{p_k}{q_k}\right\},\max\left\{\frac{p_{\{k,m\}}}{q_{\{k,m\}}},\frac{p_k}{q_k}\right\}\right).$$ The edge $E$ then separates $\mathbb{H}$ into two regions: one containing $I$ and the other containing the interval: $$\left(\min\left\{\frac{p_{\{k,m\}}}{q_{\{k,m\}}},\frac{p_k}{q_k}\right\},\max\left\{\frac{p_{\{k,m\}}}{q_{\{k,m\}}},\frac{p_k}{q_k}\right\}\right).$$ Since $\zeta_\alpha$ starts at $I$ and terminates at $\alpha$, which are contained in these two distinct regions, we can conclude that $\zeta_\alpha$ must intersect $E$ to pass from one region to the other, as required.
\end{proof}

\begin{cor}\label{edgesemiconv} Let $\zeta_\alpha$ be a geodesic ray that starts at $I$ and terminates at $\alpha\in\mathbb{R}_{>0}$. If $\zeta_\alpha$ intersects an edge $E$ in $\mathcal{F}$, then at least one of the vertices of this edge will be a convergent and the other will be a semi-convergent (and possibly even a convergent).
\end{cor}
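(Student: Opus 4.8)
The plan is to walk along the geodesic ray $\zeta_\alpha$ and describe, inductively, every edge it crosses together with the triangle lying immediately ahead of it. List the edges crossed by $\zeta_\alpha$ in order as $E_0=I,E_1,E_2,\dots$, and let $\tau_i$ be the triangle of $\mathcal{F}$ that $\zeta_\alpha$ enters through $E_i$. The invariant I would maintain is: the endpoints of $E_i$ are $\tfrac{p_k}{q_k}$ and $\tfrac{p_{\{k,m\}}}{q_{\{k,m\}}}$ for some $k\geq-1$ and some $m$ with $0\leq m\leq a_{k+1}$, and $\alpha$ lies in the open boundary arc cut off by $E_i$ on the side not containing the initial portion of $\zeta_\alpha$. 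For the base case, $E_0=I$ has endpoints $\infty=\tfrac{p_{-1}}{q_{-1}}$ and $0=\tfrac{p_{\{-1,0\}}}{q_{\{-1,0\}}}$, and $\alpha\in(0,\infty)$ since $\alpha>0$; so one endpoint is the convergent $\tfrac{p_{-1}}{q_{-1}}$ and the other the semi-convergent $\tfrac{p_{\{-1,0\}}}{q_{\{-1,0\}}}$.

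For the inductive step, write $E_i=\{A,B\}$ with $A=\tfrac{p_k}{q_k}$ and $B=\tfrac{p_{\{k,m\}}}{q_{\{k,m\}}}$, and let $C$ be the third vertex of $\tau_i$. Because $\alpha$ lies in the arc cut off by $E_i$, the ray $\zeta_\alpha$ enters the triangle on that side, so $C$ is the Farey mediant $A\oplus B$; by the semi-convergent recursion $p_{\{k,m+1\}}=p_{\{k,m\}}+p_k$ we get $C=\tfrac{p_{\{k,m+1\}}}{q_{\{k,m+1\}}}$ when $m<a_{k+1}$, while when $m=a_{k+1}$ we have $B=\tfrac{p_{k+1}}{q_{k+1}}$ and $C=\tfrac{p_{k+1}+p_k}{q_{k+1}+q_k}=\tfrac{p_{\{k+1,1\}}}{q_{\{k+1,1\}}}$; in every case $C$ is again a semi-convergent. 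Now $\zeta_\alpha$ leaves $\tau_i$ either through $\{A,C\}$ or through $\{B,C\}$ (or terminates at $C$, in the rational case), so $E_{i+1}$ is one of these two edges, and in both one endpoint is a convergent ($A$, or $B$ when $B=\tfrac{p_{k+1}}{q_{k+1}}$) and the other a semi-convergent — which is all the statement asks. To see that the invariant is actually preserved, and in particular to decide which of the two edges is $E_{i+1}$, I would use the classical fact already invoked in the proof of Corollary~\ref{semiconvedge} (the extension of \cite[Theorem~167]{HW:1938}): for fixed $k$ the semi-convergents $\tfrac{p_{\{k,m\}}}{q_{\{k,m\}}}$, $0\leq m\leq a_{k+1}$, all lie on one side of $\alpha$, while $\tfrac{p_k}{q_k}$ lies on the other. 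This pins down on which side of the mediant $C$ the point $\alpha$ sits, hence which edge $\zeta_\alpha$ exits through, and lets one re-express $E_{i+1}$ in the required form, switching the roles of the convergent and semi-convergent endpoints precisely when $\tfrac{p_{\{k,m\}}}{q_{\{k,m\}}}$ has advanced to the next convergent $\tfrac{p_{k+1}}{q_{k+1}}$.

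Since every edge that $\zeta_\alpha$ crosses is some $E_i$, this proves the statement; the rational case is handled by the same argument, noting that the opposing vertex at which $\zeta_\alpha$ terminates is $C$ or $\tfrac{p_k}{q_k}$, both of which are (semi-)convergents, and the final crossed edge is still an $E_i$ of the stated form. The main obstacle I foresee is purely bookkeeping: keeping track of which non-entry edge of $\tau_i$ is the exit edge across a fan transition, i.e.\ matching parities and the ranges $0\leq m\leq a_{k+1}$ so that a valid representation of every $E_i$ in the invariant can always be chosen. One could instead bypass the explicit induction by combining Corollary~\ref{semiconvedge} (which already shows that all convergent–semi-convergent edges are crossed) with a counting argument: the number of edges $\zeta_\alpha$ crosses between two consecutive convergents equals the intervening partial quotient, which is exactly the number of semi-convergent edges incident to the earlier convergent, leaving no room for any further edge.
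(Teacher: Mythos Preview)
Your inductive walk is correct and does the job, but it is not the route the paper takes. The paper argues via the fan structure of the cutting sequence rather than edge by edge: each fan in $(\zeta_\alpha,\mathcal{F})$ has a single fixed vertex $v_k$ common to all its triangles, and for every edge $E_{k,i}$ in that fan the ``other'' endpoint $v_{k,i}$ is identified by reading off the cutting sequence of a geodesic ray terminating there, which via Theorem~\ref{thma} gives exactly the continued fraction $[n_0;n_1,\dots,n_k,i]$; hence $v_{k,i}$ is the $\{k,i\}$-th semi-convergent and the fixed vertex $v_k=v_{k-1,n_k}=v_{k+1,0}$ is the $k$-th convergent. So the paper leans on the symbolic correspondence already established, while you instead use the Farey mediant recursion $p_{\{k,m+1\}}=p_{\{k,m\}}+p_k$ together with the classical ordering of the semi-convergents relative to $\alpha$ to propagate the invariant. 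Your approach is more arithmetic and self-contained (it never invokes Theorem~\ref{thma} again), and the counting alternative you sketch at the end is essentially the converse pairing of Corollary~\ref{semiconvedge}; the paper's approach is shorter once the cutting-sequence machinery is in place and makes the fan/convergent identification explicit, which is what later sections actually use. One small point: your base case silently uses the convention $p_{-2}=0$, $q_{-2}=1$ to make sense of $\frac{p_{\{-1,0\}}}{q_{\{-1,0\}}}=0$; this is standard but not stated in the paper, so it would be worth flagging.
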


\begin{proof}
For every fan that $\zeta_\alpha$ forms with $\mathcal{F}$, there is a vertex which is in all of the triangles of this fan. In particular, every edge in this fan will have a unique common endpoint. We refer to this vertex as the \textit{fixed vertex} of the fan. Let $v_k$ be the fixed vertex of the $(k+1)$-th fan. Then we can label each edge in the fan $E_{k,i}$, where $0\leq{i}\leq{n_{k+1}}$, using the order that $\zeta_\alpha$ intersects these edges. As previously mentioned, each of these edges $E_{k,i}$ has a common vertex $v_k$. For each edge $E_{k,i}$, we label the other vertex $v_{k,i}$. If $v_{k,i}$ is the final ``other vertex'' in this fan,  (\textit{i.e.} $i=n_{k+1}$), then this vertex is either the endpoint of $\zeta_\alpha$ or it is the fixed vertex of the next fan, \textit{i.e.} the $(k+2)$-th fan. Likewise, if $v_{k,0}$ is the first ``other vertex'' of the $(k+1)$-th fan, then $v_{k,0}$ is the fixed vertex of the previous fan, \textit{i.e.} the $k$-th fan. We can now note, that if $(\zeta_\alpha,\mathcal{F})=L^{n_0}R^{n_1}\cdots$, then the geodesic ray $\zeta_\alpha^{k,i}$ which starts at $I$ and terminates at the vertex $v_{k,i}$, has cutting sequence:  $$(\zeta_\alpha^{k,i},\mathcal{F})=L^{n_0}R^{n_1}\cdots{L^{n_k}}R^{i-1}L$$ or $$(\zeta_\alpha^{k,i},\mathcal{F})=L^{n_0}R^{n_1}\cdots{R^{n_k}}L^i, $$depending on whether $k$ is even or odd respectively. As a result, we find that the point $v_{k,i}$ has continued fraction expansion $[n_0;n_1,\ldots,n_k,i]$ (up to taking equivalent continued fraction expansions). However, this is simply the $\{k,i\}$-th semi-convergent of $\alpha$. See Definition~\ref{semiconv}. Note that by construction, the point $v_k=v_{k-1,n_k}=v_{k+1,0}$ is the $k$-th convergent $\frac{p_k}{q_k}$, which can also be written as the $\{k-1,n_k\}$-th semi-convergent $\frac{p_{k-1,n_k}}{q_{k-1,n_k}}$.

Except for possibly the point $\frac{p_{-1}}{q_{-1}}=\frac{1}{0}=\infty$, every convergent is a fixed point of a fan. This means that each convergent is the endpoint of at least two edges that $\zeta_\alpha$ intersects. Alternatively, if $\zeta_\alpha$ intersects two distinct edges, which have the same endpoint, then this endpoint is a fixed point of a fan and, therefore, this point is a convergent.
\end{proof}

\section{\texorpdfstring{Infinite Loops and the $p$-adic Littlewood Conjecture}{Infinite Loops and the p-adic Littlewood Conjecture}}\label{Infloop}

In this section, we will discuss \textit{infinite loops} mod $n$. In Section~\ref{mult}, we will motivate the concept of an infinite loop by first looking at how replacing the Farey tessellation $\mathcal{F}$ with the \textit{$\frac{1}{n}$-scaled Farey tessellation} $\frac{1}{n}\mathcal{F}$ induces integer multiplication by $n$ with respect to the corresponding cutting sequences, \textit{i.e.} if the cutting sequence $(\zeta_\alpha,\mathcal{F})$ corresponds to the continued fraction expansion of $\alpha$, then $(\zeta_\alpha,\frac{1}{n}\mathcal{F})$ corresponds to the continued fraction expansion of $n\alpha$. See~\cite{paper}.

Since the $p$-adic Littlewood Conjecture is closely related to the behaviour of continued fractions under integer multiplication, looking at the structure of $\mathcal{F}$ and $\frac{1}{n}\mathcal{F}$, as well as how these structures interact, seems like a natural place to find out more information about pLC. What we find is that if a geodesic ray $\zeta_\alpha$ intersects some edge $E$ in $\mathcal{F}\cap\frac{1}{n}\mathcal{F}$, then we can deduce that at least some of the convergents of $\alpha$ influence the convergents of  $n\alpha$ in a very nice, direct way. However, if $\zeta_\alpha$ does not intersect $\mathcal{F}\cap\frac{1}{n}\mathcal{F}$, then no convergents of $\alpha$ influence any convergents of $n\alpha$ directly. In particular, the continued fraction expansions of such geodesics behave ``badly'' relative to integer multiplication.

Instead of looking at geodesic rays which do not intersect $\mathcal{F}\cap\frac{1}{n}\mathcal{F}$, we will look at geodesic rays which satisfy a slightly weaker property: geodesic rays $\zeta_\alpha$ which do not intersect $\Gamma_0(n)\cdot{I}\subset\mathcal{F}\cap\frac{1}{n}\mathcal{F}$. This leads to the definition of an infinite loop mod $n$:

\begin{defnum}[\ref{infloopdef} (a)]
Let $\zeta_\alpha$ be a geodesic ray starting at the $y$-axis $I$ and terminating at the point $\alpha\in\mathbb{R}_{>0}$. Then $\zeta_\alpha$ is an \textit{infinite loop mod} $n$, if $\zeta_\alpha$ is disjoint from $\Gamma_0(n)\cdot{I}$  except for the edges of the form $I+k$, for $k\in\mathbb{Z}_{\geq{0}}$.
\end{defnum}

As we will see in Section~\ref{ILpLC2}, if $\alpha$ is not an infinite loop mod $n$, then we can get some nice information about $B(\alpha)$ and $B(n\alpha)$. In particular, $B(\alpha)$ and $B(n\alpha)$ can not both be small relative to $\sqrt{n}$. Furthermore, if there is some fixed $m\in\mathbb{N}$ such that $p^\ell\alpha$ is an infinite loop mod $p^m$ for all $\ell\in\mathbb{N}$, then $\alpha$ is a counterexample to pLC. These two facts combine together to give us the following reformulation of pLC:

\begin{thmn}[\ref{theorem}]
Let $\alpha\in\textbf{Bad}$. Then
$\alpha$ satisfies pLC if and only if there is a sequence of natural numbers $\left\{\ell_m\right\}_{m\in\mathbb{N}}$ such that $p^{\ell_m}\alpha$ is not an infinite loop mod $p^m$.
\end{thmn}

\subsection{Multiplication Described by Triangulation Replacement of Cutting Sequences}\label{mult}

Let $n^*:=\begin{psmallmatrix} \sqrt{n} & 0\\ 0 & \frac{1}{\sqrt{n}}\end{psmallmatrix} \in{PSL_2(\mathbb{R})}$ and define $\frac{1}{n^*}:=(n^*)^{-1}$ for $n\in\mathbb{N}$. These two maps scale both $\mathbb{H}$ and $\mathcal{F}$ by a factor of $n$ and $\frac{1}{n}$, respectively. In particular, they multiply the real axis by $n$ and $\frac{1}{n}$, respectively. For example, if $x\in\mathbb{R}$, then $n^*\cdot{x}=nx$ and $(n^*)^{-1}\cdot{}x=\frac{x}{n}$. Since $n^*\not\in{PSL_2(\mathbb{Z})}$ for $n>{1}$, these maps do not preserve $\mathcal{F}$ and we will refer to the images of $\mathcal{F}$ under these maps as $n\mathcal{F}$ and $\frac{1}{n}\mathcal{F}$, respectively. Both $n\mathcal{F}$ and $\frac{1}{n}\mathcal{F}$ will be ideal triangulations of $\mathbb{H}$, since the $n^*$ map will take geodesics to geodesics and triangles to triangles. It is worth noting that both of these maps preserve the line $I$ between $0$ and $\infty$, which is our conventional starting edge for our geodesic rays in $\mathcal{F}$.  It follows that for any geodesic ray $\zeta_\alpha$ starting at $I$ and terminating at $\alpha\in\mathbb{R}_{>0}$, the scaled geodesic ray $n^*(\zeta_\alpha)$ will also start at $I$ and terminate at the point $n\alpha\in\mathbb{R}_{>0}$. Note that $n^*(\zeta_\alpha)$ will also be a geodesic ray, since $n^*\in{PSL_2(\mathbb{R})}\cong{Isom^+(\mathbb{H})}$. As a result, the cutting sequence $(n^*(\zeta_\alpha),\mathcal{F})$ will be equivalent to the continued fraction expansion of $n\alpha$. 

Alternatively, we can scale the Farey tessellation by $(n^*)^{-1}$ to get the tessellation $\frac{1}{n}\mathcal{F}$. Relatively speaking, the geodesic ray $n^*(\zeta_\alpha)$ will intersect $\mathcal{F}$ in the same way that $\zeta_\alpha$ intersects $\frac{1}{n}\mathcal{F}$. Therefore, the cutting sequences will be equivalent, \textit{i.e.}  $(\zeta,\frac{1}{n}\mathcal{F})=(n^*(\zeta),\mathcal{F})$, and so, $\eta(\zeta_\alpha,\frac{1}{n}\mathcal{F})=\eta(n^*(\zeta_\alpha),\mathcal{F})=\overline{n\alpha}$. As a result, we can view the integer multiplication map of continued fractions $\overline{n}:\overline{\alpha}\rightarrow{\overline{n\alpha}}$ as being equivalent to replacing the triangulation $\mathcal{F}$ with $\frac{1}{n}\mathcal{F}$ in the corresponding cutting sequence. Explicitly, we can express $\overline{n}$ as the map between the cutting sequences $\overline{n}:\eta(\zeta_\alpha,\mathcal{F})\rightarrow{\eta(\zeta_\alpha,\frac{1}{n}\mathcal{F})}$. 

As a consequence, if we want to understand multiplication of continued fractions, it will be useful to further investigate the structure of $\frac{1}{n}\mathcal{F}$ relative to $\mathcal{F}$.

\subsubsection{The Structure of $\mathcal{F}\cap\frac{1}{n}\mathcal{F}$}

Recall from Section~\ref{FT}, that two points $A=\frac{p}{q}$ and $B=\frac{r}{s}$ in $\mathbb{Q}\cup\left\{\infty\right\}$ are neighbours in $\mathcal{F}$ if and only if  $|ps-rq|=1$. This in turn implies that there is some element $M\in{PSL_2(\mathbb{Z})}$ such that $M\cdot\infty=A$ and $M\cdot{0}=B$. This matrix $M$ is either of the form $\begin{psmallmatrix} p & r \\ q& s \end{psmallmatrix}$ or $\begin{psmallmatrix} p & -r \\ q& -s \end{psmallmatrix}$, depending on whether $ps-rq=1$ or $ps-rq=-1$, respectively. It is important to note that $A$ and $B$ can only be neighbours in $\mathcal{F}$ if $\gcd(ps,rq)=1$. By extension we must have that $\gcd(p,r)=\gcd(q,s)=1$. 

Using this information about $\mathcal{F}$, we can deduce similar information about $\frac{1}{n}\mathcal{F}$ by simply scaling $\mathcal{F}$ by the $(n^*)^{-1}$ map. Using this structure, we obtain the following lemma:

\begin{lem}\label{lem2} Two points $A$ and $B$ are neighbours in both $\mathcal{F}$ and $\frac{1}{n}\mathcal{F}$ if and only if they have reduced form $\frac{a}{n_{1}c_1}$ and $\frac{b}{n_2d_1}$, with $n=n_1 n_2$ and $|{an_2d_1-bn_1c_1}|=1$.
\end{lem}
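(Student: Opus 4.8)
The plan is to turn the geometric condition into arithmetic and then do gcd bookkeeping. Write $A=\frac aq$ and $B=\frac bs$ in reduced form (with the usual convention $\infty=\frac10$). By Proposition~\ref{PSL} and the description of $\mathcal F$ in Section~\ref{FT}, $A$ and $B$ are neighbours in $\mathcal F$ exactly when $|as-bq|=1$. Since $\frac1n\mathcal F=(n^*)^{-1}\mathcal F$ and $n^*\in PSL_2(\mathbb{R})$ is an isometry that carries $\frac1n\mathcal F$ onto $\mathcal F$ and acts on $\partial\mathbb H$ by $x\mapsto nx$, the points $A,B$ are neighbours in $\frac1n\mathcal F$ if and only if $nA$ and $nB$ are neighbours in $\mathcal F$. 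So everything reduces to understanding the reduced form of $nA$.

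Here the key elementary fact I would isolate is: if $A=\frac aq$ is reduced and we set $n_1:=\gcd(n,q)$, $c_1:=q/n_1$, then $\gcd(na,q)=\gcd(n,q)=n_1$ (because $\gcd(a,q)=1$), so $nA=\frac{(n/n_1)a}{c_1}$ is already in reduced form; likewise, with $n_2:=\gcd(n,s)$ and $d_1:=s/n_2$, we get $nB=\frac{(n/n_2)b}{d_1}$ in reduced form. Hence ``$A,B$ neighbours in $\frac1n\mathcal F$'' is equivalent to $\big|(n/n_1)\,a\,d_1-(n/n_2)\,b\,c_1\big|=1$.

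For the forward implication, suppose $A,B$ are neighbours in both and name $n_1,c_1,n_2,d_1$ as above. The $\mathcal F$-condition $|as-bq|=1$ is literally $\big|a\,n_2 d_1-b\,n_1 c_1\big|=1$, which already gives the claimed determinant equation and also $\gcd(q,s)=1$; since $n_1\mid q$ and $n_2\mid s$ this forces $\gcd(n_1,n_2)=1$, and as $n_1,n_2$ both divide $n$ we get $n_1n_2\mid n$, say $n=n_1n_2t$. Feeding $n/n_1=n_2t$ and $n/n_2=n_1t$ into the $\frac1n\mathcal F$-condition yields $t\big|n_2\,a\,d_1-n_1\,b\,c_1\big|=t=1$, so $n=n_1n_2$; with $q=n_1c_1$, $s=n_2d_1$ this is exactly the asserted form. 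Conversely, given $A=\frac a{n_1c_1}$, $B=\frac b{n_2d_1}$ reduced with $n=n_1n_2$ and $|a n_2d_1-bn_1c_1|=1$, setting $q=n_1c_1$, $s=n_2d_1$ gives $|as-bq|=1$, so $A,B$ are neighbours in $\mathcal F$; moreover $|a n_2d_1-bn_1c_1|=1$ forces $\gcd(n_1c_1,n_2d_1)=1$, in particular $\gcd(n_2,c_1)=\gcd(n_1,d_1)=1$, so together with $\gcd(a,c_1)=\gcd(b,d_1)=1$ the fractions $nA=\frac{n_2a}{c_1}$ and $nB=\frac{n_1b}{d_1}$ are reduced, and $|n_2\,a\,d_1-n_1\,b\,c_1|=1$ says they are neighbours in $\mathcal F$, i.e.\ $A,B$ are neighbours in $\frac1n\mathcal F$.

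I do not expect a genuine obstacle: the whole proof is a matter of recording which gcds are forced to equal $1$. The one spot that needs attention is the reducedness of $nA$ and $nB$ after cancelling $n_1$ (resp.\ $n_2$) --- immediate from $\gcd(a,q)=1$ in the forward direction, but in the converse direction requiring the coprimalities $\gcd(n_2,c_1)=\gcd(n_1,d_1)=1$ that one must first extract from the determinant equation --- together with the observation that the $\frac1n\mathcal F$-condition (rather than just the $\mathcal F$-condition) is exactly what upgrades $n_1n_2\mid n$ to $n=n_1n_2$.
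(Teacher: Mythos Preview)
The proposal is correct and follows essentially the same approach as the paper: translate neighbourhood in $\frac{1}{n}\mathcal{F}$ to neighbourhood of $nA,nB$ in $\mathcal{F}$, compute the reduced forms of the scaled fractions, and compare the two determinant conditions via gcd bookkeeping. Your route to $n=n_1n_2$ (set $t:=n/(n_1n_2)$ and read off $t=1$ directly from the two determinant equations) is a slightly crisper variant of the paper's chain of gcd identities showing $n_1=h$ and $g=n_2$, but the underlying strategy is identical.
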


\begin{proof}
$(\Rightarrow)$: Assume that $A=\frac{a}{c}$ and $B=\frac{b}{d}$ are neighbours in $\mathcal{F}$ and $\frac{1}{n}\mathcal{F}$. Since $A$ and $B$ are neighbours in $\mathcal{F}$, we can conclude that $|ad-bc|=1$, and more importantly for us: $$\gcd(c,d)=1.$$ 
Since $\frac{1}{n}\mathcal{F}$ is a scaled version of the Farey tessellation, $A$ and $B$ are neighbours in $\frac{1}{n}\mathcal{F}$ if and only if $n^*\cdot{A}=n\cdot{A}=\frac{na}{c}$ and $n^*\cdot{B}=n\cdot{B}=\frac{nb}{d}$ are neighbours in $\mathcal{F}$. Of course, $n\cdot{A}=\frac{na}{c}$ and $n\cdot{B}=\frac{nb}{d}$ will not necessarily be in reduced form. We will take $g:=\gcd(c,n)$ and $h:=\gcd(d,n)$. In this case, we can rewrite $c,d$ and $n$ in the following ways:
\begin{align*}
c=n_1c_1, \quad & n=n_1g, \\d=n_2d_1,  \quad & n=n_2h.
 \end{align*}
We can then rewrite $n\cdot{A}$ and $n\cdot{B}$ in reduced form as:
 $$n\cdot{}A=\frac{n_1ga}{n_1c_1}=\frac{ga}{c_1},$$ $${n}\cdot{}B=\frac{n_2hb}{n_2d_1}=\frac{hb}{d_1}.$$ 
Since $n\cdot{A}$ and $n\cdot{B}$ are neighbours in $\mathcal{F}$, we see that $|gad_1-hbc_1|=1$. Necessarily, we can not have $\gcd(g,h)=r\neq{1}$, since this would imply that $|gad_1-hbc_1|\equiv{0}\mod{r}$ and so $|gad_1-hbc_1|\neq{}1$. Therefore, we can conclude that: $$\gcd(g,h)=1.$$ 
Since we know that $gcd(c,d)=1$,  $c=n_1c_1$, and $d=n_2d_1$, we can conclude that:
 $$\gcd(c,d)=1=\gcd(n_1c_1,n_2d_1)=\gcd(n_1,n_2).$$
 Using this equality, we see that:
\begin{align*} n_1 &=\gcd(n_1,n)\\ &=\gcd(n_1,n_2h)\\ &=\gcd(n_1,n_2)\cdot\gcd(n_1,h)\\ &=1\cdot\gcd(n_1,h)\\ &=\gcd(n_1,h).
 \end{align*}
However, since $\gcd(g,h)=1$, we can also deduce that:  \begin{align*}
h &=\gcd(h,n)\\ &=\gcd(h,n_1g)\\ &=\gcd(h,n_1)\cdot\gcd(h,g)\\&=\gcd(h,n_1)\cdot1\\ &=\gcd(h,n_1),
\end{align*}
 and so: $$n_1=\gcd(n_1,h)=\gcd(h,n_1)=h.$$
 Since $n=n_1g=n_2h$, we can now conclude that $g=n_2$, and so: $$n=n_1n_2.$$
Combining this information all together, we can now write $A=\frac{a}{n_1c_1}$ and $B=\frac{b}{n_2d_1}$ with $|an_2d_1-bn_1c_1|=1$ and $n=n_1n_2$, as required.

$(\Leftarrow):$ Let $A=\frac{a}{n_{1}c}$ and $B=\frac{b}{n_2d}$ with $n=n_1 n_2$ and $|{an_2d-bn_1c}|=1$. Since $|{an_2d-bn_1c}|=1$, we see that $A$ and $B$ are neighbours in $\mathcal{F}$. Writing $n\cdot{A}$ and $n\cdot{B}$ in reduced form, we have that: $$n\cdot{A}=\frac{n_2a}{c},$$ and $$n\cdot{B}=\frac{n_1b}{d}.$$
We can now check to see if $n\cdot{A}$ and $n\cdot{B}$ are neighbours in $\mathcal{F}$ by computing the value of  $|{n_2ad-n_1bc}|$. Here, we have  $|{n_2ad-n_1bc}|=|{an_2d-bn_1c}|=1$, and so $n\cdot{A}$ and $n\cdot{B}$ are indeed neighbours in $\mathcal{F}$. By rescaling by a factor of $(n^*)^{-1}$, we now see that $A$ and $B$ are neighbours in $\frac{1}{n}\mathcal{F}$, as required.
\end{proof}

In the above lemma (Lemma~\ref{lem2}), requiring the condition that $A$ and $B$ have reduced form $\frac{a}{n_{1}c}$ and $\frac{b}{n_2d}$ with $n=n_1 n_2$ and $|{an_2d-bn_1c}|=1$, is equivalent to saying that if $A$ and $B$ are neighbours of this form in either $\mathcal{F}$ or $\frac{1}{n}\mathcal{F}$, then necessarily they are neighbours in both $\mathcal{F}$ and $\frac{1}{n}\mathcal{F}$. 

\subsubsection{Geodesics Intersecting $\mathcal{F}\cap\frac{1}{n}\mathcal{F}$}\label{FnF}

Assume that $\zeta_\alpha$ is a geodesic ray which starts at $I$ and terminates at $\alpha$. If $\zeta_\alpha$ intersects an edge $E$ in $\mathcal{F}\cap\frac{1}{n}\mathcal{F}$, then we can cut $\zeta_\alpha$ up into two pieces:
$\zeta_{\alpha,1}$, which runs along $\zeta_\alpha$ from $I$ to $E$, and $\zeta_{\alpha,2}$, which runs along $\zeta_\alpha$ from $E$ to $\alpha$. Note that $\zeta_{\alpha,1}$ starts at an edge $I$ in $\mathcal{F}$ and terminates at an edge $E$ in $\mathcal{F}$. Therefore, if $\zeta_{\alpha,1}$ intersects a triangle in $\mathcal{F}$, then it cuts this triangle to either form a left triangle or a right triangle. As a result, we can produce a well-defined cutting sequence $(\zeta_{\alpha,1},\mathcal{F})$ - even though $\zeta_{\alpha,1}$ is a geodesic segment and not a geodesic ray. Similarly, since $\zeta_{\alpha,2}$ starts at an edge $E$ in $\mathcal{F}$, the cutting sequence $(\zeta_{\alpha,2},\mathcal{F})$ is also well-defined. Furthermore, when we cut along $E$ to produce $\zeta_{\alpha,1}$ and $\zeta_{\alpha,2}$, we effectively split the cutting sequence $(\zeta_\alpha,\mathcal{F})$ into two smaller cutting sequences. These are exactly the cutting sequences $(\zeta_{\alpha,1},\mathcal{F})$ and $(\zeta_{\alpha,2},\mathcal{F})$. Gluing these cutting sequences back together we see that:
$$(\zeta_\alpha,\mathcal{F})=(\zeta_{\alpha,1},\mathcal{F})\cdot(\zeta_{\alpha,2},\mathcal{F}).$$

Similarly, since the edges $I$ and $E$ are also in $\frac{1}{n}\mathcal{F}$, we can conclude that the cutting sequences $(\zeta_{\alpha,1},\frac{1}{n}\mathcal{F})$ and $(\zeta_{\alpha,2},\frac{1}{n}\mathcal{F})$ are also well-defined. Again, we see that:
$$(\zeta_\alpha,\frac{1}{n}\mathcal{F})=(\zeta_{\alpha,1},\frac{1}{n}\mathcal{F})\cdot(\zeta_{\alpha,2},\frac{1}{n}\mathcal{F}).$$

Of course, we could do this procedure for all the edges in $\mathcal{F}\cap\frac{1}{n}\mathcal{F}$ that $\zeta_\alpha$ intersects. For example, if $\zeta_\alpha$ intersects a sequence of edges $\{E_0=I,E_1,\ldots,E_k\}$ in $\mathcal{F}\cap\frac{1}{n}\mathcal{F}$ (labelled such that $\zeta_\alpha$ cuts these edges in order), then we can cut $\zeta_\alpha$ into a sequence of geodesic segments $\{\zeta_{\alpha,1},\zeta_{\alpha,2},\ldots,\zeta_{\alpha,k+1}\}$ such that each segment $\zeta_{\alpha,i}$ runs along $\zeta_\alpha$ between $E_{i-1}$ and $E_{i}$ for $i\in\{1,\ldots,k\}$ and $\zeta_{\alpha,k+1}$ runs along $\zeta_\alpha$ from $E_k$ to $\alpha$. Since the cutting sequences $(\zeta_{\alpha,i},\mathcal{F})$ and $(\zeta_{\alpha,i},\frac{1}{n}\mathcal{F})$ are well defined for $i\in\{1,2,\ldots,k+1\}$, we see that:
$$(\zeta_\alpha,\mathcal{F})=\prod\limits_{i=1}^{k+1}(\zeta_{\alpha,i},\mathcal{F})$$ and $$(\zeta_\alpha,\frac{1}{n}\mathcal{F})=\prod\limits_{i=1}^{k+1}(\zeta_{\alpha,i},\frac{1}{n}\mathcal{F}).$$

If such a decomposition exists, then the way that the triangulation replacement (and therefore the integer multiplication) affects the cutting sequences of each geodesic segment $\zeta_{\alpha,i}$ is independent of the way that the triangulation replacement affects the cutting sequences of any other geodesic segment $\zeta_{\alpha,j}$. Heuristically, we can think of this as saying that the some of the semi-convergents of $\alpha$ directly influence some of the semi-convergents of $n\alpha$. This is formalised in the following proposition:

\begin{prop}
Let $\alpha\in\mathbb{R}_{>0}$ and assume that $\overline{\alpha}$ has a convergent of the form $\frac{p_k}{q_k}=\frac{a}{n_1c}$ and semi-convergent of the form $\frac{p_{\{k,m\}}}{q_{\{k,m\}}}=\frac{b}{n_2d}$ such that $n=n_1n_2$ and $|an_2d-bn_2c|=1$. Then $\frac{n_2a}{c}$ and $\frac{n_1b}{d}$ are both semi-convergents of $\overline{n\alpha}$. In fact, at least one of $\frac{n_2a}{c}$ or $\frac{n_1b}{d}$ will be a convergent for $\overline{n\alpha}$.
\end{prop}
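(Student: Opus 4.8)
The plan is to transport the convergent/edge dictionary of Section~\ref{CSFT} across the scaling map $n^{*}$ of Section~\ref{mult}, so that the proposition becomes a direct application of Corollary~\ref{edgesemiconv} to the geodesic ray representing $n\alpha$. Concretely, I would produce a single edge $E$ of $\mathcal{F}$ that is crossed by $\zeta_\alpha$, has the prescribed convergent and semi-convergent of $\alpha$ as its endpoints, and also lies in $\tfrac1n\mathcal{F}$; then $n^{*}\!\cdot E$ is an edge of $\mathcal{F}$ crossed by $n^{*}(\zeta_\alpha)$, and reading off its endpoints via Corollary~\ref{edgesemiconv} gives the claim.

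First I would fix a geodesic ray $\zeta_\alpha$ starting at $I$ and terminating at $\alpha$. Since $\tfrac{p_k}{q_k}=\tfrac{a}{n_1c}$ is a convergent of $\alpha$ and $\tfrac{p_{\{k,m\}}}{q_{\{k,m\}}}=\tfrac{b}{n_2d}$ is a semi-convergent of $\alpha$, Corollary~\ref{semiconvedge} provides an edge $E$ of $\mathcal{F}$ joining these two points, together with the fact that $\zeta_\alpha$ intersects $E$. The hypotheses $n=n_1n_2$ and $|an_2d-bn_1c|=1$ are exactly the condition appearing in Lemma~\ref{lem2} (with the roles of the two vertices as in that statement), so Lemma~\ref{lem2} tells us that these endpoints are neighbours in $\tfrac1n\mathcal{F}$ as well; hence $E\in\mathcal{F}\cap\tfrac1n\mathcal{F}$. (This is the step where the decomposition of Section~\ref{FnF} is implicitly in play: $E$ is one of the edges along which $\zeta_\alpha$ can be split simultaneously for $\mathcal{F}$ and $\tfrac1n\mathcal{F}$.)

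Next I would push the picture forward by $n^{*}$. As recalled in Section~\ref{mult}, $n^{*}$ fixes $I$, sends $\alpha$ to $n\alpha$, acts on points of $\mathbb{R}$ by $z\mapsto nz$, and satisfies $(\zeta_\alpha,\tfrac1n\mathcal{F})=(n^{*}(\zeta_\alpha),\mathcal{F})$; in particular $n^{*}(\zeta_\alpha)$ is a geodesic ray from $I$ to $n\alpha>0$ whose cutting sequence relative to $\mathcal{F}$ is the continued fraction expansion of $n\alpha$. Because $n^{*}$ is an isometry of $\mathbb{H}$ and $E\in\tfrac1n\mathcal{F}=(n^{*})^{-1}\mathcal{F}$, the image $n^{*}\!\cdot E$ is an edge of $\mathcal{F}$, and $n^{*}(\zeta_\alpha)$ intersects it. Its endpoints are $n^{*}\!\cdot\tfrac{a}{n_1c}=\tfrac{na}{n_1c}=\tfrac{n_2a}{c}$ and $n^{*}\!\cdot\tfrac{b}{n_2d}=\tfrac{nb}{n_2d}=\tfrac{n_1b}{d}$, and these are already in lowest terms: $\gcd(a,c)=1$ and $\gcd(c,n_2)=1$ (denominators of $\mathcal{F}$-neighbours are coprime, so $n_2$ divides neither $c$), whence $\gcd(n_2a,c)=1$, and symmetrically $\gcd(n_1b,d)=1$. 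This is precisely the reduction carried out in the $(\Leftarrow)$ direction of Lemma~\ref{lem2}.

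Finally I would apply Corollary~\ref{edgesemiconv} to the geodesic ray $n^{*}(\zeta_\alpha)$ and the edge $n^{*}\!\cdot E\in\mathcal{F}$: since $n^{*}(\zeta_\alpha)$ starts at $I$ and terminates at the positive real $n\alpha$, one vertex of $n^{*}\!\cdot E$ is a convergent of $n\alpha$ and the other is a semi-convergent of $n\alpha$ (possibly also a convergent). Since a convergent is in particular a semi-convergent, this says exactly that $\tfrac{n_2a}{c}$ and $\tfrac{n_1b}{d}$ are both semi-convergents of $\overline{n\alpha}$, at least one of which is a convergent. The only friction in the argument is bookkeeping already dispatched in the proofs of Corollaries~\ref{semiconvedge} and~\ref{edgesemiconv}: the degenerate cases where $n^{*}\!\cdot E=I$ or where $n\alpha$ coincides with one of the two endpoints (so "the ray crosses $E$'' is an endpoint incidence), and the verification just given that no common factor appears in the reduced forms of $n\cdot\tfrac{a}{n_1c}$ and $n\cdot\tfrac{b}{n_2d}$. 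I expect these to be the trickiest points to state cleanly, but none of them obstructs the argument.
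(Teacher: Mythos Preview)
Your proposal is correct and follows essentially the same route as the paper: produce the edge $E$ between the convergent and semi-convergent via Corollary~\ref{semiconvedge}, observe (via Lemma~\ref{lem2}) that $E\in\mathcal{F}\cap\tfrac{1}{n}\mathcal{F}$, push forward by $n^{*}$, and then read off the conclusion from Corollary~\ref{edgesemiconv} applied to $n^{*}(\zeta_\alpha)$ and $n^{*}\!\cdot E$. Your version is in fact slightly more careful than the paper's, in that you explicitly verify the reduced forms of $\tfrac{n_2a}{c}$ and $\tfrac{n_1b}{d}$ and flag the degenerate endpoint cases, but the underlying argument is identical.
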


\begin{proof}
Recall from Corollary~\ref{semiconvedge}, that if $\frac{p_k}{q_k}$ is a convergent of ${\alpha}$ and $\frac{p_{\{k,m\}}}{q_{\{k,m\}}}$ is a semi-convergent, then $\frac{p_k}{q_k}$ and $\frac{p_{\{k,m\}}}{q_{\{k,m\}}}$ are neighbours in $\mathcal{F}$. Furthermore, the corresponding geodesic ray $\zeta_\alpha$ intersects the edge $E$ between $\frac{p_k}{q_k}$ and $\frac{p_{\{k,m\}}}{q_{\{k,m\}}}$. Since $E$ is also an edge of $\frac{1}{n}\mathcal{F}$, we can rescale our space using the $n^*$ map. This allows us to see that $n^*\cdot{}\zeta_\alpha$ intersects $n^*\cdot{}E$, which is an edge in $\mathcal{F}$ with end points $\frac{n_2a}{c_1}$ and $\frac{n_1b}{d_1}$. Since $n^*\in{PSL_2(\mathbb{R})}$, it follows that $n^*\cdot\zeta_\alpha$ is a geodesic ray, which starts at the $y$-axis $I$, and terminates at a point $n\alpha$.  Therefore, by using Corollary~\ref{edgesemiconv}, we can conclude that both $\frac{n_2a}{c_1}$ and $\frac{n_1b}{d_1}$ will be semi-convergents of $\overline{n\alpha}$. In fact, by the proof of Corollary~\ref{semiconvedge}, one of these edges must be a fixed point in a fan. Therefore, either  $\frac{n_2a}{c_1}$ or $\frac{n_1b}{d_1}$ must be a convergent of $\overline{n\alpha}$. 
\end{proof}

\begin{rem}
In the above proof, it is worth noting that if $A=\frac{a}{n_{1}c_1}$ was a convergent of $\alpha$, this does not necessarily mean that $n{A}=\frac{n_2a}{c_1}$ is a convergent of $n\alpha$ - we can only conclude that one of $n\cdot{A}$ or $n\cdot{B}$ is a convergent.
\end{rem}

On the other hand, if we try to cut $\zeta_\alpha$ along an edge $E$ which is not in $\mathcal{F}$ (or $\frac{1}{n}\mathcal{F}$), then neither $\zeta_{\alpha,1}$ or $\zeta_{\alpha,2}$ will have well-defined cutting sequences relative to $\mathcal{F}$ (or $\frac{1}{n}\mathcal{F}$). In particular, if a geodesic ray $\zeta_\alpha$ does not intersect any edges in $\mathcal{F}\cap\frac{1}{n}\mathcal{F}$ then there is no way to decompose $\zeta_\alpha$ into smaller geodesic segments, such that each geodesic segment has a well-defined cutting sequence relative to both $\mathcal{F}$ and $\frac{1}{n}\mathcal{F}$. Again, heuristically we can think of this as essentially saying that no semi-convergents of $\alpha$ directly correspond any semi-convergents of $n\alpha$. Put another way, if we have an algorithm that maps partial quotients of $\overline{\alpha}$ to partial quotients of $\overline{n\alpha}$ and $\zeta_\alpha$ does not intersect $\mathcal{F}\cap\frac{1}{n}\mathcal{F}$ (except for at $I$), then each step in the algorithm will always depend on previous  partial quotients of $\overline{\alpha}$. As a result, these numbers behave badly with respect to integer multiplication of the corresponding continued fractions.

\subsubsection{Groups which preserve $\mathcal{F}$ and $\frac{1}{n}\mathcal{F}$}

As seen in Section~\ref{FT}, $Isom^+(\mathcal{F})=PSL_2(\mathbb{Z})$ is the maximal orientation-preserving group which preserves $\mathcal{F}$. Note that if $M$ is some orientation preserving matrix which preserves $\mathcal{F}$, then $\widetilde{M}:={ (n^*)^{-1}\circ{M}\circ{(n^*)}}$ is an orientation preserving matrix that preserves $\frac{1}{n}\mathcal{F}$. We can view this map $\widetilde{M}$ in the following way: First we use the $n^*$ map to scale $\frac{1}{n}\mathcal{F}$ to $\mathcal{F}$. Then we act upon $\mathcal{F}$ using the map $M$.  Finally, we scale $\mathcal{F}$ back to $\frac{1}{n}\mathcal{F}$ by using the map $(n^*)^{-1}$. 
Equivalently, if $L$ is a orientation preserving map which preserves $\frac{1}{n}\mathcal{F}$, then ${ (n^*)\circ{L}\circ{(n^*)^{-1}}}$ is an orientation preserving map which preserves $\mathcal{F}$. As a result, it follows that: $$Isom^+\left(\frac{1}{n}\mathcal{F}\right)=\{ (n^*)^{-1}\circ{A}\circ{(n^*)} : A\in{PSL_2(\mathbb{Z})}\}$$ is the maximal orientation preserving group which preserves $\frac{1}{n}\mathcal{F}$.

By explicit computation, we can see that these elements are of the following form: $${Isom^+\left(\frac{1}{n}\mathcal{F}\right)=\bigg\{\begin{psmallmatrix} a & \frac{b}{n}\\ nc & d \end{psmallmatrix} \in PSL_2(\mathbb{R}): \begin{psmallmatrix} a & b\\ c & d \end{psmallmatrix}\in{PSL_2(\mathbb{Z})} \bigg\}},$$ and $Isom^+(\frac{1}{n}\mathcal{F})$ takes on a natural group structure induced by $Isom^+(\mathcal{F})$.

We can recover a common subgroup of the maximal invariant subgroups of $\mathcal{F}$ and $\frac{1}{n}\mathcal{F}$ by taking the intersection of $Isom^+(\mathcal{F})$ and $Isom^+(\frac{1}{n}\mathcal{F})$. Again, by explicit computation, we see that $Isom^+(\mathcal{F})\cap{Isom^+(\frac{1}{n}\mathcal{F})}$ is given by:$$\Gamma_0(n):=\big\{\begin{psmallmatrix} a & b\\ c & d \end{psmallmatrix} \in PSL_2(\mathbb{Z}): c\equiv{0} \,\text{(mod n)} \big\}.$$ The group $\Gamma_0(n)$ is a subgroup of both $Isom^+(\mathcal{F})$ and $Isom^+(\frac{1}{n}\mathcal{F})$ by construction, and, therefore, preserves the structure of both $\mathcal{F}$ and $\frac{1}{n}\mathcal{F}$.

\subsubsection{The Structure of $\Gamma_0(n)\cdot{I}$}

When looking at $\Gamma_0(n)\cdot{I}$, the first thing to note is that the edge $I$ is in both $\mathcal{F}$ and $\frac{1}{n}\mathcal{F}$, for all $n\in\mathbb{N}$. Furthermore, $\Gamma_0(n)$ preserves both $\mathcal{F}$ and $\frac{1}{n}\mathcal{F}$ and, therefore, preserves their intersection $\mathcal{F}\cap\frac{1}{n}\mathcal{F}$. As a result, we can conclude that $\phi\cdot{I}$ is an edge of $\mathcal{F}\cap\frac{1}{n}\mathcal{F}$, for all $\phi\in\Gamma_0(n)$. This allows us to deduce that $\Gamma_0(n)\cdot{I}\subset\mathcal{F}\cap\frac{1}{n}\mathcal{F}$.

If $\phi:=\begin{psmallmatrix}a &b \\nc &d\end{psmallmatrix}\in\Gamma_0(n)\cdot{I}$, then $\phi\cdot{\infty}=\frac{a}{cn}$ and $\phi\cdot{0}=\frac{b}{d}$. Therefore, $\phi$ maps $I$ to an edge between $\frac{a}{cn}$ and $\frac{b}{d}$. Alternatively, if $\frac{a}{cn}$ and $\frac{b}{d}$ are neighbours in $\mathcal{F}$, then $|ad-bcn|=1$ and, therefore, either $\begin{psmallmatrix} a&b\\cn&d\end{psmallmatrix}$ is an element of $\Gamma_0(n)$ or $\begin{psmallmatrix} a&-b\\cn&-d\end{psmallmatrix}$ is an element of $\Gamma_0(n)$. This gives us the following lemma:

\begin{lem}\label{lem1}
Two points $A$ and $B$ are neighbours in $\Gamma_0(n)\cdot{I}$ if and only if they have reduced form $\frac{a}{nc}$ and $\frac{b}{d}$, with $|{ad-bnc}|=1$.
\end{lem}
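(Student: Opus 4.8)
The plan is to mirror the structure of Proposition~\ref{PSL} and the discussion immediately preceding the statement, where the edge-set of a tessellation (or an orbit of $I$) is identified with an orbit of matrices acting on $I$. The key observation already recorded in the excerpt is that for $\phi = \begin{psmallmatrix} a & b \\ nc & d\end{psmallmatrix} \in \Gamma_0(n)$ one has $\phi\cdot\infty = \frac{a}{nc}$ and $\phi\cdot 0 = \frac{b}{d}$, so $\phi\cdot I$ is exactly the geodesic edge between $\frac{a}{nc}$ and $\frac{b}{d}$. Thus the forward direction is essentially a restatement of this computation, once one is careful about reduced forms.

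For the $(\Rightarrow)$ direction I would argue as follows. Suppose $A$ and $B$ are joined by an edge in $\Gamma_0(n)\cdot I$, say $E = \phi\cdot I$ with $\phi = \begin{psmallmatrix} a' & b' \\ nc' & d'\end{psmallmatrix}\in\Gamma_0(n)$ (so $a'd' - nb'c' = 1$). Then $A = \phi\cdot\infty = \frac{a'}{nc'}$ and $B = \phi\cdot 0 = \frac{b'}{d'}$ as one of the two endpoints each. I must check these expressions are already in reduced form, or else pass to reduced form and track what happens to the index $n$ in the first denominator: since $\gcd(a',nc')$ divides $a'd' - nb'c' = 1$ only after clearing common factors — more carefully, $\gcd(a', nc')\mid a'd'$ and $\gcd(a',nc')\mid nc'b'$, hence divides their difference $\pm 1$, so $\gcd(a',nc') = 1$; similarly $\gcd(b',d') = 1$. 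Hence $A = \frac{a'}{nc'}$ and $B = \frac{b'}{d'}$ are reduced, and writing $a = a'$, $c = c'$, $b = b'$, $d = d'$ gives $A = \frac{a}{nc}$, $B = \frac{b}{d}$ with $|ad - bnc| = 1$, as required.

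For the $(\Leftarrow)$ direction, suppose $A = \frac{a}{nc}$ and $B = \frac{b}{d}$ in reduced form with $|ad - bnc| = 1$. If $ad - bnc = 1$, set $\phi = \begin{psmallmatrix} a & b \\ nc & d\end{psmallmatrix}$; then $\det\phi = ad - bnc = 1$, the bottom-left entry is $\equiv 0 \pmod n$, so $\phi\in\Gamma_0(n)$, and $\phi\cdot I$ is the edge between $\phi\cdot\infty = A$ and $\phi\cdot 0 = B$. If instead $ad - bnc = -1$, use $\phi = \begin{psmallmatrix} a & -b \\ nc & -d\end{psmallmatrix}$, which has determinant $-ad + nbc = 1$ and bottom-left entry $\equiv 0\pmod n$, and again $\phi\cdot I$ is the edge between $A$ and $B$ (now $\phi\cdot 0 = \frac{-b}{-d} = B$). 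Either way $A$ and $B$ are neighbours in $\Gamma_0(n)\cdot I$.

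The only genuinely delicate point — and the one I would flag as the main obstacle — is the reduced-form bookkeeping in the $(\Rightarrow)$ direction: one must ensure that the factor of $n$ really does survive into the denominator of $A$ when the matrix entries are reduced, i.e. that no cancellation can remove it, and that the power of $n$ claimed in the statement is precisely the one appearing in $\Gamma_0(n)$ (not a proper divisor). This is handled by the gcd argument above, using $\det\phi = \pm1$ to force $\gcd(a', nc') = 1$, but it should be spelled out rather than waved through, since it is exactly the subtlety that distinguishes this lemma from the cleaner statement of Proposition~\ref{PSL}. Everything else is a direct transcription of the matrix action on $I$ already carried out in the paragraph preceding the statement.
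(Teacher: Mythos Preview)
Your proposal is correct and follows essentially the same approach as the paper: the paper's argument is the short paragraph immediately preceding the lemma, which computes the endpoints of $\phi\cdot I$ for $\phi\in\Gamma_0(n)$ in one direction and, in the other, builds the matrix $\begin{psmallmatrix} a & b \\ cn & d\end{psmallmatrix}$ or $\begin{psmallmatrix} a & -b \\ cn & -d\end{psmallmatrix}$ from the reduced fractions. Your version is in fact more careful than the paper's, since you explicitly verify via the $\gcd$ argument that $\frac{a'}{nc'}$ and $\frac{b'}{d'}$ are already in reduced form---a point the paper leaves implicit.
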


 Comparing this to Lemma~\ref{lem2} allows us to deduce the following corollary:
\begin{cor}\label{subset}
The set of edges $\Gamma_0(n)\cdot{I}$ is a subset of $\mathcal{F}\cap\frac{1}{n}\mathcal{F}$. These sets are equivalent if and only if $n$ is a prime power.
\end{cor}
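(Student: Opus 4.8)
The plan is to obtain the statement by directly comparing the two edge descriptions given by Lemma~\ref{lem1} and Lemma~\ref{lem2}. The inclusion $\Gamma_0(n)\cdot I\subseteq\mathcal{F}\cap\frac{1}{n}\mathcal{F}$ has essentially been established already: $I$ is an edge of both $\mathcal{F}$ and $\frac{1}{n}\mathcal{F}$, and $\Gamma_0(n)$ preserves both tessellations, hence their intersection, so it carries $I$ into $\mathcal{F}\cap\frac{1}{n}\mathcal{F}$. All the remaining content is the claim that this inclusion is an equality precisely when $n$ is a prime power.

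First I would set up a dictionary between the two lemmas. Let $E$ be an edge of $\mathcal{F}\cap\frac{1}{n}\mathcal{F}$ with endpoints $A,B$ written in reduced form. Lemma~\ref{lem2} attaches to $E$ a factorisation $n=n_1n_2$; tracing its proof one sees that $n_2=\gcd(\mathrm{denom}(A),n)$ and $n_1=\gcd(\mathrm{denom}(B),n)$, so the unordered pair $\{n_1,n_2\}$ is an invariant of $E$ (swapping $A$ and $B$ just swaps $n_1$ and $n_2$), and moreover $\gcd(n_1,n_2)=1$, since any common factor of $n_1,n_2$ would divide $|an_2d_1-bn_1c_1|=1$. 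On the other hand, by Lemma~\ref{lem1} the edge $E$ lies in $\Gamma_0(n)\cdot I$ exactly when one of its endpoints has reduced denominator divisible by $n$ and the other has reduced denominator coprime to $n$ --- here one uses that in Lemma~\ref{lem1} the endpoint $\frac{b}{d}$ automatically satisfies $\gcd(d,n)=1$, because $\gcd(d,n)$ divides $ad-bnc=\pm1$. In terms of the formulas for $n_1,n_2$ this condition is exactly $\{n_1,n_2\}=\{1,n\}$. Hence $\Gamma_0(n)\cdot I=\mathcal{F}\cap\frac{1}{n}\mathcal{F}$ if and only if every edge of $\mathcal{F}\cap\frac{1}{n}\mathcal{F}$ has $\{n_1,n_2\}=\{1,n\}$.

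The two directions are then elementary. If $n=p^m$, every factorisation $n=n_1n_2$ with $\gcd(n_1,n_2)=1$ forces $n_1=1$ or $n_2=1$; since every edge of $\mathcal{F}\cap\frac{1}{n}\mathcal{F}$ carries coprime factors, every such edge satisfies $\{n_1,n_2\}=\{1,n\}$ and therefore lies in $\Gamma_0(n)\cdot I$, so the sets agree. Conversely, suppose $n$ is not a prime power, and write $n=n_1n_2$ with $n_1$ the full power of a single prime dividing $n$ and $n_2=n/n_1$, so that $n_1,n_2>1$ and $\gcd(n_1,n_2)=1$. The equation $an_2-bn_1=\pm1$ is solvable in integers (because $\gcd(n_1,n_2)=1$), and any solution automatically has $\gcd(a,n_1)=\gcd(b,n_2)=1$, since $an_2\equiv\pm1\pmod{n_1}$ and $bn_1\equiv\mp1\pmod{n_2}$. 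Then $\frac{a}{n_1}$ and $\frac{b}{n_2}$ are in reduced form and, by Lemma~\ref{lem2} applied with $c_1=d_1=1$, they span an edge of $\mathcal{F}\cap\frac{1}{n}\mathcal{F}$. Its endpoint denominators $n_1$ and $n_2$ are both strictly less than $n$, so neither is divisible by $n$, and hence by Lemma~\ref{lem1} this edge is not in $\Gamma_0(n)\cdot I$; the inclusion is therefore strict. (The degenerate case $n=1$ is trivial, with both sides equal to the full edge set of $\mathcal{F}$.)

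The step I expect to be most delicate is the dictionary itself: one must check that the description of neighbours in $\Gamma_0(n)\cdot I$ from Lemma~\ref{lem1} coincides exactly with the $\{n_1,n_2\}=\{1,n\}$ case of Lemma~\ref{lem2}, and that no edge can carry incompatible factorisation data under the two viewpoints. This is handled by the observation that the reduced denominators of the two endpoints of an edge determine their greatest common divisors with $n$ and thereby pin down $\{n_1,n_2\}$; with that in hand, everything else is routine number theory.
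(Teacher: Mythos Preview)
Your proof is correct and follows essentially the same approach as the paper: both arguments compare the edge characterisations of Lemma~\ref{lem1} and Lemma~\ref{lem2}, observe that the coprimality constraint $\gcd(n_1,n_2)=1$ forces the trivial factorisation when $n$ is a prime power, and in the non-prime-power case construct an explicit edge in $\mathcal{F}\cap\frac{1}{n}\mathcal{F}\setminus\Gamma_0(n)\cdot I$ via B\'ezout. Your framing in terms of the invariant $\{n_1,n_2\}$ attached to each edge and your choice $c_1=d_1=1$ make the construction slightly cleaner than the paper's, but the substance is the same.
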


\begin{proof}
Recall from Lemma~\ref{lem2}, that two points $A$ and $B$ are neighbours in both $\mathcal{F}$ and $\frac{1}{n}\mathcal{F}$ if and only if they have reduced form $\frac{a}{n_{1}c_1}$ and $\frac{b}{n_2d_1}$, with $n=n_1 n_2$ and $|{an_2d_1-bn_1c_1}|=1$. Note that here we require that $\gcd(n_1,n_2)=1$, otherwise $|{an_2d_1-bn_1c_1}|=1$ has no solutions.

We will first show that if there is some $n_1,n_2\in\mathbb{N}$ such that $n=n_1n_2$ with $n_1>1$ and $n_2>1$, and  $\gcd(n_1,n_2)=1$, then $\Gamma_0(n)\cdot{I}\neq{\mathcal{F}\cap\frac{1}{n}\mathcal{F}}$. 

\noindent
\textbf{Claim:} Assume that there is some $n_1,n_2\in\mathbb{N}$ such that $n=n_1n_2$ with $n_1>1$ and $n_2>1$, and  $\gcd(n_1,n_2)=1$. Then there is at least one pair of points $\frac{a}{n_{1}c_1}$ and $\frac{b}{n_2d_1}$, which satisfy $|{an_2d_1-bn_1c_1}|=1$. 

\noindent
\textit{Proof of claim.}
First of all, let $a$ and $c_1$ be any numbers in $\mathbb{N}$, such that $\gcd(n_2,c_1)=1$. Then by the extended Euclidean algorithm - see~\cite{Jones:1998} - there are infinitely many solutions $(X,Y)$, to:
$$ an_2Y + n_1c_1X= 1.$$
Let $(X_1,Y_1)$ be one of these solutions. Then $\frac{a}{n_1c_1}$ and $\frac{-X_1}{n_2Y_1}$  satisfy:
$$an_2Y_1 - n_1c_1(-X_1)=an_2Y_1 + n_1c_1X_1=1.$$
Therefore, the points $\frac{a}{n_1c_1}$ and $\frac{-X_1}{n_2Y_1}$ satisfy $|{an_2Y_1-bn_1(-X_1)}|=1$. \hfill\textit{QED.}

In this case, $\frac{a}{n_1c_1}$ and $\frac{-X_1}{n_2Y_1}$ form an edge in $\mathcal{F}\cap\frac{1}{n}\mathcal{F}$, but not in $\Gamma_0(n)\cdot{I}$. In particular, $\Gamma_0(n)\cdot{I}\neq{\mathcal{F}\cap\frac{1}{n}\mathcal{F}}$.

Note that if $n$ is not a prime power, then by prime decomposition, we can always write $n=p_1^{\ell_1}n_2$ such that $p_1$ is prime, $p_1>1$ and $n_2>1$, and $\gcd(p_1,n_2)=1$. Therefore, in this case we can use the above argument to see that $\Gamma_0(n)\neq{\mathcal{F}\cap\frac{1}{n}\mathcal{F}}$.

On the other hand, if $n=p^\ell$ is some prime power, then we can only write $n=p_1^{\ell_1}n_2$ with $\gcd(p_1,n_2)=1$, if either $p_1=p$, $\ell_1=\ell$ and $n_2=1$, or $p_1=1$ and $n_2=p^\ell$. As a result, the only pairs of points satisfying Lemma~\ref{lem2} must be of the form $\frac{a}{p^\ell{c}}$ and $\frac{b}{d}$ with $\gcd(p^\ell,d)=1$. In particular, every edge which is in $\mathcal{F}\cap\frac{1}{n}\mathcal{F}$ must also be an edge $\Gamma_0(n)\cdot{I}$ by Lemma~\ref{lem1}.
\end{proof}

\subsubsection{Geodesics Intersecting $\Gamma_0(n)\cdot{I}$}

As we did for arbitrary edges in $\mathcal{F}\cap\frac{1}{n}\mathcal{F}$, if a geodesic $\zeta_\alpha$ intersects an edge $\phi\cdot{I}$ in $\phi\in\Gamma_0(n)\cdot{I}$, we can decompose $\zeta_\alpha$ into two paths: $\zeta_{\alpha,1}$, which runs along $\zeta_\alpha$ from $I$ to $\phi\cdot{I}$, and $\zeta_{\alpha,2}$, which runs along $\zeta_\alpha$ from $\phi\cdot{I}$ to $\alpha$. Since $\phi\cdot{I}$ is an edge in $\mathcal{F}\cap\frac{1}{n}\mathcal{F}$, we still have that:
$$(\zeta_\alpha,\mathcal{F})=(\zeta_{\alpha,1},\mathcal{F})\cdot(\zeta_{\alpha,2},\mathcal{F})$$ and 
$$(\zeta_\alpha,\frac{1}{n}\mathcal{F})=(\zeta_{\alpha,1},\frac{1}{n}\mathcal{F})\cdot(\zeta_{\alpha,2},\frac{1}{n}\mathcal{F}).$$

However, in this case we can gather even more information. Since $\zeta_{\alpha,2}$ is a geodesic ray, which starts at $\phi\cdot{I}$ and terminates at $\alpha$, it follows that $\phi^{-1}\cdot\zeta_{\alpha,2}$ is a geodesic ray, which starts at $I$ and terminates at $\beta:=\phi^{-1}\cdot{\alpha}$.
Since we assumed that $\zeta_\alpha$ non-trivially intersects $\phi\cdot{I}$, either $\beta<0$ or $\beta>0$. If $\beta>0$, then by Theorem~\ref{thma}, the cutting sequence $(\phi^{-1}\cdot\zeta_\alpha,\mathcal{F})$ is equivalent to the continued fraction expansion of $\beta$ and the cutting sequence $(\phi^{-1}\cdot\zeta_\alpha,\frac{1}{n}\mathcal{F})$ is equivalent to the continued fraction is equivalent to the continued fraction expansion of $n\beta$. Otherwise, if $\beta<0$, the cutting sequence $(\phi^{-1}\cdot\zeta_\alpha,\mathcal{F})$ is equivalent to the continued fraction expansion of $\frac{-1}{\beta}$ and the cutting sequence $(\phi^{-1}\cdot\zeta_\alpha,\frac{1}{n}\mathcal{F})$ is equivalent to the continued fraction is equivalent to the continued fraction expansion of $\frac{-1}{n\beta}$.
Since $\phi$ is an orientation preserving isomtery, the notion of a left or right triangle is preserved, and so, the geodesic ray $\zeta_{\alpha,2}$ will intersect $\mathcal{F}$ in the same way that $\phi^{-1}\cdot\zeta_{\alpha,2}$ intersects $\phi^{-1}\cdot\mathcal{F}$. However, since $\phi\in\Gamma_0(n)$ it follows that $\phi^{-1}\in\Gamma_0(n)$, and therefore, $\phi^{-1}\cdot\mathcal{F}=\mathcal{F}$. In particular, we have:
$$(\zeta_{\alpha,2},\mathcal{F})=(\phi^{-1}\cdot\zeta_{\alpha,2},\mathcal{F}).$$
By the same argument, it also follows that:
$$(\zeta_{\alpha,2},\frac{1}{n}\mathcal{F})=(\phi^{-1}\cdot\zeta_{\alpha,2},\frac{1}{n}\mathcal{F}).$$
This allows us to deduce that $\overline\beta$ is a tail of $\overline\alpha$ and $\overline{n\beta}$ is a tail of $\overline{n\alpha}$.

%
%\begin{prop}
%If $\alpha$ is not an infinite loop mod $n$, then there is a tail $\overline{\beta}$ of $\overline\alpha$ such that $\overline{n\beta}$ is a tail of $\overline{n\alpha}$.
%\end{prop}
If a geodesic ray $\zeta_\alpha$ intersects $\phi\cdot{I}$, for some $\phi\in\Gamma_0(n)$, then we can look at how $\phi^{-1}\cdot{\zeta_\alpha}$ intersects $I$ relative to $\mathcal{F}$ and $\frac{1}{n}\mathcal{F}$ to recover information about the continued fraction expansion of $\alpha$ and $n\alpha$. The following proposition gives motivation for why we may want to do this:

\begin{prop}\label{pro2} If a continued fraction $\overline\alpha$ has a convergent denominator $q_k$, such that $q_k=nq_k'$, for some $n\in\mathbb{N}$ and some $q_k'\in\mathbb{N}_{>1}$, then $B(n\alpha)\geq{na_{k+1}}$. Furthermore, if $\frac{p_k}{q_k} =\frac{p_k}{nq'_k}$ is a convergent of $\overline\alpha$, then $\frac{p_k}{q'_k}$ is a convergent of $\overline{n\alpha}$.
\end{prop}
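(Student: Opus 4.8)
The plan is to transport the convergent $\tfrac{p_k}{q_k}$ through the scaling map $n^*$, exploiting that when $n\mid q_k$ the edge of $\mathcal F$ adjacent to $\tfrac{p_k}{q_k}$ also lies in $\tfrac1n\mathcal F$. First I would record the arithmetic. Since $\tfrac{p_k}{q_k}$ is a convergent, $\gcd(p_k,q_k)=1$; as $q_k=nq_k'$ this forces $\gcd(p_k,n)=\gcd(p_k,q_k')=1$, so $\tfrac{p_k}{q_k'}=\tfrac{np_k}{q_k}$ is already in lowest terms, and $q_k'>1$ forces $k\ge1$ (because $q_0=1$, $q_{-1}=0$), hence $q_{k-1}\ge1$. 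The predecessor $\tfrac{p_{k-1}}{q_{k-1}}$ is reduced with $p_kq_{k-1}-p_{k-1}q_k=(-1)^{k-1}$, i.e. $\lvert p_kq_{k-1}-p_{k-1}(nq_k')\rvert=1$, so by Lemma~\ref{lem1} the edge $E$ of $\mathcal F$ joining $\tfrac{p_k}{q_k}$ and $\tfrac{p_{k-1}}{q_{k-1}}$ lies in $\Gamma_0(n)\cdot I$; since neither endpoint is $\infty$ we also have $E\ne I$.

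For the inequality $B(n\alpha)\ge na_{k+1}$ I would run the geodesic decomposition of the subsection on geodesics intersecting $\Gamma_0(n)\cdot I$. By Corollary~\ref{semiconvedge} applied with the semi-convergent $\tfrac{p_{\{k,0\}}}{q_{\{k,0\}}}=\tfrac{p_{k-1}}{q_{k-1}}$, the ray $\zeta_\alpha$ crosses $E$; writing $E=\phi\cdot I$ with $\phi\in\Gamma_0(n)$ and splitting $\zeta_\alpha=\zeta_{\alpha,1}\cdot\zeta_{\alpha,2}$ along $E$, the image $\phi^{-1}\zeta_{\alpha,2}$ is a ray from $I$ to $\beta:=\phi^{-1}\alpha$. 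Using the classical identity $\alpha=\frac{\alpha_{k+1}p_k+p_{k-1}}{\alpha_{k+1}q_k+q_{k-1}}$, where $\alpha_{k+1}=[a_{k+1};a_{k+2},\dots]$ is the $(k+1)$-st complete quotient, a short computation gives $\beta=\pm\alpha_{k+1}$; hence by that subsection a tail of $\overline{n\alpha}$ is governed by $n\alpha_{k+1}$, so the integer part $\lfloor n\alpha_{k+1}\rfloor$ occurs as a partial quotient of $\overline{n\alpha}$. Since $\alpha_{k+1}\in(a_{k+1},a_{k+1}+1)$ we get $n\alpha_{k+1}\in(na_{k+1},na_{k+1}+n)$, so $\lfloor n\alpha_{k+1}\rfloor\ge na_{k+1}$, and therefore $B(n\alpha)\ge na_{k+1}$ once one checks that this is a \emph{non-initial} partial quotient, which uses $E\ne I$ and the fact that $E$ cannot be an edge of the first fan of $(\zeta_\alpha,\tfrac1n\mathcal F)$ when that fan is the one fixed at $\infty$ (its edges all terminate at $\infty$, whereas the endpoints of $E$ are finite).

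For the ``furthermore'' I would use the whole fan at $\tfrac{p_k}{q_k}$ rather than $\beta$. For $0\le i\le a_{k+1}$ the semi-convergent $\tfrac{p_{\{k,i\}}}{q_{\{k,i\}}}$ is reduced with $\lvert p_kq_{\{k,i\}}-p_{\{k,i\}}q_k\rvert=1$ and $q_k=nq_k'$, so by Lemma~\ref{lem1} every edge $E_{k,i}$ of the fan of $\zeta_\alpha$ fixed at $\tfrac{p_k}{q_k}$ lies in $\Gamma_0(n)\cdot I\subseteq\mathcal F\cap\tfrac1n\mathcal F$. Applying $n^*$, the edges $n^*E_{k,i}$ are pairwise distinct edges of $\mathcal F$ with the common vertex $n^*\!\cdot\!\tfrac{p_k}{q_k}=\tfrac{p_k}{q_k'}$, and the ray $\zeta_{n\alpha}=n^*\zeta_\alpha$ (which starts at $I$ and ends at $n\alpha$) crosses every one of them, since $\zeta_\alpha$ crosses every $E_{k,i}$. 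As $a_{k+1}\ge1$ there are at least two such edges, so by the proof of Corollary~\ref{edgesemiconv} the point $\tfrac{p_k}{q_k'}$ is the fixed vertex of a fan of $\zeta_{n\alpha}$, i.e. a convergent of $\overline{n\alpha}$.

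The step I expect to be the real work is obtaining the \emph{exact} constant $na_{k+1}$: a soft estimate from Theorem~\ref{uplow} applied to $\lvert n\alpha-\tfrac{p_k}{q_k'}\rvert=n\lvert\alpha-\tfrac{p_k}{q_k}\rvert$ only yields $B(n\alpha)\ge na_{k+1}-1$, so the argument must genuinely exhibit a partial quotient equal to $\lfloor n\alpha_{k+1}\rfloor$ by routing through the complete quotient $\beta$; the accompanying nuisance is the sign/orientation bookkeeping in the case $\beta<0$, together with the verification that the exhibited partial quotient is not the initial one.
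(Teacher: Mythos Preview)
Your proposal is correct and follows essentially the same route as the paper: identify the edge between $\tfrac{p_k}{q_k}$ and $\tfrac{p_{k-1}}{q_{k-1}}$ as an edge of $\Gamma_0(n)\cdot I$, transport through the corresponding $\phi\in\Gamma_0(n)$, and read off a fan of size at least $na_{k+1}$ at the fixed vertex $\tfrac{p_k}{q_k'}$ in $\tfrac1n\mathcal F$ (equivalently, at $\tfrac{p_k}{q_k'}$ for $n\alpha$). The only cosmetic difference is that the paper tracks the semi-convergents $\tfrac{p_{\{k,m\}}}{q_{\{k,m\}}}$ under $\phi^{-1}$ one by one to see $|\beta|>a_{k+1}$ and then counts the $na_{k+1}$ vertical edges of $\tfrac1n\mathcal F$ crossed, whereas you shortcut this with the complete-quotient identity to get $\beta=\pm\alpha_{k+1}$ and $\lfloor n\alpha_{k+1}\rfloor\ge na_{k+1}$; the paper also derives both conclusions simultaneously from the fan at $\tfrac{p_k}{q_k'}$, rather than treating the bound and the ``furthermore'' separately as you do.
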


\begin{proof}
Since $q_k=nq_k'$, we can guarantee that $\gcd(q_{k-1},n)=1$. As a result, the edge between $\frac{p_k}{nq_k'}$ and $\frac{p_{k-1}}{q_{k-1}}$ is an edge in $\Gamma_0(n)\cdot{I}$. We can therefore find a map $\phi\in\Gamma_0(n)$ such that $\phi\cdot\infty=\frac{p_k}{nq_k'}$ and $\phi\cdot{0}=\frac{p_{k-1}}{q_{k-1}}$. Moreover, there is an edge in $\mathcal{F}$ between $\frac{p_k}{q_k}$ and each of the semi-convergents  $\frac{p_{\{k,m\}}}{q_{\{k,m\}}}$, by Corollary~\ref{semiconvedge}. We can therefore also guarantee that $\gcd(np_{k}',p_{k,m})=1$ and by extension, $\gcd(n,p_{k,m})=1$. In particular, not only are the edges between $\frac{p_k}{q_k}$ and $\frac{p_{\{k,m\}}}{q_{\{k,m\}}}$ in $\mathcal{F}$, but they are also edges in $\Gamma_0(n)\cdot{I}$.

Let $\zeta_{\alpha,2}$ be the geodesic ray which runs along $\zeta_\alpha$, starting at $\phi\cdot{I}$ and terminating at $\alpha$. Then $\phi^{-1}\cdot\zeta_{\alpha,2}$ starts at $I$ and terminates at $\phi^{-1}\cdot\alpha$. The map  $\phi^{-1}$ takes $\frac{p_k}{nq_k'}$ to the point at $\infty$, take $\frac{p_{k-1}}{q_{k-1}}$ to $0$, and preserves the structure of $\mathcal{F}$. Therefore, if $\frac{a}{c}$ is a neighbour of $\frac{p_k}{q_k}$, then $\phi^{-1}\cdot\frac{a}{c}$ must be a neighbour of $\infty$. In particular, each of the semi-convergents $\frac{p_{\{k,m\}}}{q_{\{k,m\}}}$ get mapped to a neighbour of $\infty$. 

The $\{k,1\}$-th semi-convergent $\frac{p_{\{k,1\}}}{q_{\{k,1\}}}$ is also a neighbour of $\frac{p_{k-1}}{q_{k-1}}$, and therefore $\phi^{-1}\cdot{\frac{p_{\{k,1\}}}{q_{\{k,1\}}}}$ must be a neighbour of both $0$ and $\infty$. There are two options, either $\phi^{-1}\cdot{\frac{p_{\{k,1\}}}{q_{\{k,1\}}}}=1$ or $\phi^{-1}\cdot{\frac{p_{\{k,1\}}}{q_{\{k,1\}}}}=-1$. By the same argument, $\phi^{-1}\cdot\frac{p_{\{k,2\}}}{q_{\{k,2\}}}$ is must be a neighbour of $\phi^{-1}\cdot{\frac{p_{\{k,1\}}}{q_{\{k,1\}}}}$ and $\infty$, \textit{i.e.} $\phi^{-1}\cdot\frac{p_{\{k,2\}}}{q_{\{k,2\}}}=2$, if $\phi^{-1}\cdot{\frac{p_{\{k,1\}}}{q_{\{k,1\}}}}=1,$ and $\phi^{-1}\cdot\frac{p_{\{k,2\}}}{q_{\{k,2\}}}=-2$, if $\phi^{-1}\cdot{\frac{p_{\{k,1\}}}{q_{\{k,1\}}}}=-1$. There are $a_{k+1}$ of these semi-convergents and so we can repeat this procedure to see that $\phi^{-1}\cdot{\zeta_\alpha}$ intersects $a_{k+1}$ edges which have $\infty$ as an endpoint. The other endpoint of these edges will either be $i$ or $-i$, for $i\in\{1,\ldots,a_{k+1}\}$ (depending on whether $\phi^{-1}\cdot{\frac{p_{\{k,1\}}}{q_{\{k,1\}}}}=1$ or $\phi^{-1}\cdot{\frac{p_{\{k,1\}}}{q_{\{k,1\}}}}=-1$). As such, the endpoint $\beta=\phi^{-1}\cdot\alpha$ of $\phi^{-1}\cdot\zeta_\alpha$ either satisfies $\beta>a_{k+1}$ or $\beta<-a_{k+1}$. 

If we take $a\in\mathbb{Z}$, then the points $a$, $a+1$ and $\infty$ form a triangle in $\mathcal{F}$. When we replace $\mathcal{F}$ with $\frac{1}{n}\mathcal{F}$, each of these triangles is effectively subdivided into $n$ triangles. This is because  for all $i\in\mathbb{Z}$, the points $\frac{i}{n}$, $\frac{i+1}{n}$ and $\infty$ form a triangle in $\frac{1}{n}\mathcal{F}$. As a result, we can guarantee that $\phi^{-1}\cdot\zeta_\alpha$ intersects at least $na_{k+1}$ triangles of this form in $\frac{1}{n}\mathcal{F}$. 

Since each of these triangles have $\infty$ as a fixed point, they form a fan in $\frac{1}{n}\mathcal{F}$. Therefore, $\phi\cdot{\infty}=\frac{p_k}{nq_k'}$ is the fixed point of the corresponding fan that $\zeta_\alpha$ forms with $\frac{1}{n}\mathcal{F}$. When we rescale by the $n^*$ map, this allows us to deduce that $n\cdot\frac{p_k}{nq_k'}=\frac{p_k}{q_k'}$ is a convergent of $n\alpha$. The fan corresponding to this convergent is of size at least $na_{k+1}$ by the above argument. Finally, since $q_k'\neq{0}$, we can guarantee that this fan is not the first fan in $n\alpha$. In particular, if $\overline{n\alpha};=[b_0;b_1,\ldots]$, then we have shown that there is some $b_\ell$ with $b_\ell\geq{}n_{a_k}$, for $\ell\geq{1}$. As a result, $B(n\alpha)\geq{b_\ell}\geq{na_{k+1}}$, as required.
\end{proof}

We should note that Proposition~\ref{pro2} is a folklore result in Diophantine approximation, and not terribly difficult to prove using basic knowledge of continued fractions. However, it does illustrate a fairly powerful technique that we will use later: using the structure of $\mathcal{F}$ and $\frac{1}{n}\mathcal{F}$ ``near'' $I$, to determine properties of geodesics which intersect $\Gamma_0(n)\cdot{I}$.  This motivates our definition of an \textit{infinite loop} mod $n$:

\begin{defn}[\textbf{a}]\label{infloopdef}
Let $\zeta_\alpha$ be a geodesic ray starting at the $y$-axis $I$ and terminating at the point $\alpha\in\mathbb{R}_{>0}$. Then $\zeta_\alpha$ is an \textit{infinite loop mod} $n$, if $\zeta_\alpha$ is disjoint from $\Gamma_0(n)\cdot{I}$  except for the edges of the form $I+k$, for $k\in\mathbb{Z}_{\geq{0}}$.
\end{defn}

If $n=p^\ell$, then by Corollary~\ref{subset}, a geodesic ray $\zeta_\alpha$ is an infinite loop mod $n$ if and only if $\zeta_\alpha$ is disjoint from $\mathcal{F}\cap\frac{1}{n}\mathcal{F}$ - except for the edges of the form $I+k$, for $k\in\mathbb{Z}_{\geq{0}}$. In this case, we see that the corresponding continued fraction expansions behave badly under integer multiplication - as discussed in Section~\ref{FnF}. However, if $n\neq{p^\ell}$, we may have that $\zeta_\alpha$ is an infinite loop mod $n$, but $\zeta_\alpha$ still intersects $\mathcal{F}\cap\frac{1}{n}\mathcal{F}$. In this case, the corresponding continued fraction may not behave particularly badly under integer multiplication, but it also does not behave particularly well, since we can not find a tail $\overline{\beta}$ of $\overline{\alpha}$ such that $\overline{n\beta}$ is also a tail of $\overline{n\alpha}$.

\subsubsection{An Alternative Definition of Infinite Loops}

As seen in Lemma~\ref{lem1}, two points $A$ and $B$ are neighbours in $\Gamma_0(n)\cdot{I}$  if and only if they have reduced form $\frac{a}{nc}$ and $\frac{b}{d}$, with $|{ad-bnc}|=1$. Viewing this information through the lens of infinite loops, we see that if $\zeta_\alpha$ is an infinite loop mod $n$, then $\zeta_\alpha$ can not intersect any edge in $\mathcal{F}$ which has an endpoint with denominator divisible by $n$ (except for the point at $\infty$). However, as seen in  Proposition~\ref{semiconvedge}, the semi-convergents of $\alpha$ are exactly the endpoints of the edges in $\mathcal{F}$ which $\zeta_\alpha$ intersects.
This leads to an equivalent definition of an infinite loop mod $n$ (as a real number).

\begin{defnum}[\ref{infloopdef} (b)] An \textit{infinite loop} mod $n$ is any real number $\alpha\in\mathbb{R}_{>0}$ with no semi-convergent denominators which are divisible by $n$ (other than $q_{-1}=0$). 
\end{defnum}

\begin{rem}
Here, we should note that if $\alpha\in\mathbb{Q}$, we will assume that the continued fraction expansion $\overline{\alpha}$ ends in a partial quotient of size $\infty$. The real number $\alpha$ still produces two separate continued fraction expansions of the form $[a_0;a_1,\ldots,a_m+1,\infty]$ and $[a_0;a_1,\ldots,a_m,1,\infty]$. The reason why we do this is because we may have rational numbers which are the endpoint of some edge in $\Gamma_0(n)\cdot{I}$, but do not have a semi-convergent denominator divisible by $n$, unless we include the final partial quotient of size $\infty$. Note that since:
$$\lim\limits_{k\to\infty}a_0+\cfrac{1}{a_1+\cfrac{1}{{\ldots}+\cfrac{1}{a_m+1+\cfrac{1}{k}}}}=a_0+\cfrac{1}{a_1+\cfrac{1}{{\ldots}+\cfrac{1}{a_m+1}}},$$
we will consider the continued fraction expansions $[a_0;a_1,\ldots,a_m+1,\infty]$ and $[a_0;a_1,\ldots,a_m+1]$ to be equivalent.
\end{rem}

Viewing infinite loops in terms of semi-convergents allows us to very easily deduce the following:

\begin{lem} If $\alpha$ is an infinite loop mod $n$, then $\alpha$ is an infinite loop mod $kn$, where $k\in\mathbb{N}$.
\end{lem}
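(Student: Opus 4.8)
The plan is to work with the arithmetic characterisation of infinite loops from Definition~\ref{infloopdef}~(b) rather than the geometric one, since it makes the statement almost immediate. The key observation is that the collection of semi-convergent denominators of $\alpha$, namely all the $q_{\{k,m\}}$ together with $q_{-1}=0$, is a fixed collection of non-negative integers attached to $\alpha$; it does not depend on any choice of modulus. The hypothesis ``$\alpha$ is an infinite loop mod $n$'' says precisely that none of the \emph{nonzero} members of this collection is divisible by $n$.

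With this in hand, the only input needed is the elementary divisibility fact that $n \mid kn$ for every $k \in \mathbb{N}$, so that $kn \mid q$ implies $n \mid q$. Suppose, for contradiction, that $\alpha$ is not an infinite loop mod $kn$: then there is a nonzero semi-convergent denominator $q$ of $\alpha$ with $kn \mid q$. By the divisibility fact, $n \mid q$, and since $q \ne 0 = q_{-1}$, the number $\alpha$ has a nonzero semi-convergent denominator divisible by $n$, contradicting the hypothesis that $\alpha$ is an infinite loop mod $n$. Hence no nonzero semi-convergent denominator of $\alpha$ is divisible by $kn$, i.e.\ $\alpha$ is an infinite loop mod $kn$.

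There is essentially no obstacle here; the only minor care needed is the rational case, where (following the remark after Definition~\ref{infloopdef}~(b)) one appends a final partial quotient of size $\infty$ to $\overline{\alpha}$ before reading off its semi-convergent denominators. This convention still produces a fixed set of integer denominators, and the implication $kn \mid q \Rightarrow n \mid q$ is unaffected, so the argument goes through verbatim. Alternatively, one could argue geometrically via Definition~\ref{infloopdef}~(a): since $c \equiv 0 \pmod{kn}$ forces $c \equiv 0 \pmod{n}$, we have $\Gamma_0(kn) \subseteq \Gamma_0(n)$ and hence $\Gamma_0(kn)\cdot I \subseteq \Gamma_0(n)\cdot I$, while the excluded edges $I+j$ with $j \in \mathbb{Z}_{\ge 0}$ are the same for both moduli; so a geodesic ray $\zeta_\alpha$ disjoint from $\Gamma_0(n)\cdot I$ apart from the edges $I+j$ is a fortiori disjoint from $\Gamma_0(kn)\cdot I$ apart from those same edges.
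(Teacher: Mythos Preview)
Your proof is correct and follows essentially the same approach as the paper: both use the arithmetic characterisation (Definition~\ref{infloopdef}~(b)) together with the trivial implication $kn\mid q \Rightarrow n\mid q$. The paper's proof is simply the two-line version of your argument, without the contrapositive framing or the additional geometric alternative you supply.
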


\begin{proof}
Since $\alpha$ is an infinite loop mod $n$, it has no semi-convergent denominators which are divisible by $n$. By extension, $\alpha$ has no semi-convergent denominators divisible by $kn$, where $k\in\mathbb{N}$.
\end{proof}

\subsubsection{Existence of Infinite Loops mod $n$, for $n\geq{4}$}

In this section, we will show that for every $n\geq{4}$, there exist infinite loops mod $n$. In order to do this, we will first need to prove the following lemma:

\begin{lem}\label{infbound}
Let $\frac{a}{cn}$ and $\frac{b}{d}$ by two points in $\mathbb{R}_{>0}$ which satisfy $|ad-bcn|=1$. Then, for all $\alpha\in\mathbb{R}_{>0}$ satisfying $\min\left\{\frac{a}{cn},\frac{b}{d}\right\}\leq{\alpha}\leq\max\left\{\frac{a}{cn},\frac{b}{d}\right\}$, the geodesic ray $\zeta_\alpha$ is not an infinite loop mod $n$.
\end{lem}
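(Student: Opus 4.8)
The plan is to show that the edge $E$ between $A = \frac{a}{cn}$ and $B = \frac{b}{d}$ lies in $\Gamma_0(n)\cdot I$, and that every geodesic ray $\zeta_\alpha$ with $\alpha$ strictly between $A$ and $B$ (or equal to one of them) must cross $E$, hence cannot be an infinite loop mod $n$ unless $E$ happens to be of the excluded form $I+k$. First I would invoke Lemma~\ref{lem1}: since $|ad - bcn| = 1$ with $A = \frac{a}{cn}$ (denominator divisible by $n$) and $B = \frac{b}{d}$, the points $A$ and $B$ are neighbours in $\Gamma_0(n)\cdot I$, so there is an edge $E$ of $\Gamma_0(n)\cdot I$ joining them. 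In particular $E$ is an edge of $\mathcal{F}$, and it is one of the edges that an infinite loop mod $n$ is forbidden to intersect — unless $E = I + k$ for some $k \in \mathbb{Z}_{\geq 0}$, which forces (up to orientation) $\{A,B\} = \{k, \infty\}$ or $\{k, k+1\}$ with appropriate denominators; I would dispense with that degenerate case by noting that then the geodesic interval between $A$ and $B$ is $(k, k+1)$ or unbounded, and even there $\zeta_\alpha$ for $\alpha$ in the closed interval still meets $I+k$ (its starting edge or an edge it crosses), so it is still not an infinite loop — or, more cleanly, simply restrict attention to the generic edge and remark the boundary case separately.

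Next I would establish the crossing claim. The edge $E$ separates $\mathbb{H}$ into two half-planes whose boundary arcs on $\partial\mathbb{H}$ are the two complementary intervals determined by $A$ and $B$; one of these open intervals is exactly $\left(\min\{A,B\}, \max\{A,B\}\right)$. Now $\zeta_\alpha$ starts on the $y$-axis $I$ and terminates at $\alpha$. If $\alpha$ lies in the open interval strictly between $A$ and $B$, then $\alpha$ is in the half-plane not containing (most of) $I$; more carefully, $I$ has endpoints $0$ and $\infty$, and I would check that $0$ and $\infty$ are not in the open interval $\left(\min\{A,B\},\max\{A,B\}\right)$ — this uses $A, B > 0$, which is given — so $I$ lies in the closure of the other region. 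Hence $\zeta_\alpha$ runs from one side of $E$ to the other and must intersect $E$. If instead $\alpha$ equals $A$ or $B$, then $\zeta_\alpha$ terminates at an endpoint of $E$, which still counts as intersecting $E$. Either way $\zeta_\alpha$ meets the edge $E \in \Gamma_0(n)\cdot I$, and since $E$ is not of the form $I + k$ in the generic case, $\zeta_\alpha$ fails the definition of an infinite loop mod $n$. This argument is essentially the one already used in the proof of Corollary~\ref{semiconvedge}, so I would model it on that.

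The one genuine subtlety — the step I expect to need the most care — is the interaction with the exception "except for the edges of the form $I+k$". I need to be sure that in the cases where $E$ *is* of that excluded form, the conclusion of the lemma still holds, i.e. $\zeta_\alpha$ still turns out not to be an infinite loop. When $E = I + k$, the defining data forces, up to swapping $A$ and $B$, $A = \frac{k}{1}$ and $B = \frac{1}{0} = \infty$ (the $k=0$ case being $E=I$ itself), or a shifted copy; but then there is no $\alpha \in \mathbb{R}_{>0}$ strictly between a finite point and $\infty$ on the bounded side — the bounded complementary interval is empty or degenerate — so the hypothesis $\min\{A,B\} \le \alpha \le \max\{A,B\}$ with both in $\mathbb{R}_{>0}$ either is vacuous or pins $\alpha$ to an endpoint, and I would handle that endpoint directly. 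Alternatively, since $|ad-bcn|=1$ with $A=\frac{a}{cn}$ in lowest terms, if $cn \geq 2$ then $A$ cannot be an integer, so $E \neq I+k$ automatically; the only way to get $E = I+k$ is $cn = 1$, impossible for $n \geq 2$. Thus for $n \geq 2$ the exception never arises and the generic argument is the whole proof. I would state it this way to keep the write-up clean.
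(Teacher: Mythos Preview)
Your proposal is correct and follows essentially the same approach as the paper: invoke Lemma~\ref{lem1} to see that the edge $E$ between $\frac{a}{cn}$ and $\frac{b}{d}$ lies in $\Gamma_0(n)\cdot I$, observe that $E$ separates $I$ from $\alpha$, and conclude that $\zeta_\alpha$ must cross $E$. You are in fact more careful than the paper about the $I+k$ exception and the endpoint cases (the paper's three-line proof leaves these implicit), and your observation that both endpoints lie in $\mathbb{R}_{>0}$, hence are finite, already rules out $E=I+k$ since those edges have $\infty$ as a vertex.
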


\begin{proof}
Here, the edge between $\frac{a}{cn}$ and $\frac{b}{d}$ lies in $\Gamma_0(n)\cdot{I}$ by Lemma~\ref{lem1}. Furthermore, this edge separates $\mathbb{H}$ into two regions: one containing $I$, and the other containing $\alpha$. Since the geodesic ray $\zeta_\alpha$ runs from $I$ to $\alpha$, $\zeta_\alpha$ must necessarily intersect the edge between $\frac{a}{cn}$ and $\frac{b}{d}$. Therefore, $\zeta_\alpha$ can not be an infinite loop mod $n$.
\end{proof}

This allows us to prove the following:

\begin{prop} If $n\in\mathbb{N}$ and $n\geq{4}$, then there exist infinite loops mod $n$.
\end{prop}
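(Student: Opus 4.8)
The plan is to work with the continued-fraction form of the definition (Definition~\ref{infloopdef}(b)): $\alpha$ is an infinite loop mod $n$ iff no semiconvergent denominator of $\alpha$ other than $q_{-1}=0$ is divisible by $n$. (By Lemma~\ref{infbound} the set of $\alpha$ that fail this is a dense union of intervals, so an infinite loop must be produced explicitly.) The structural fact I would lean on is that the semiconvergent denominators occurring in the $j$-th fan, namely $q_{j-1},\ q_{j-1}+q_j,\ \dots,\ q_{j-1}+a_{j+1}q_j$, form an arithmetic progression modulo $n$ with common difference $q_j$; such a fan avoids the class $0$ precisely when it is short enough relative to $-q_j^{-1}q_{j-1}\bmod n$, and the start $q_{-1}=0,\ q_0=1$ is harmless. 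So everything reduces to choosing partial quotients that keep all convergent denominators, and all fans, away from $0\bmod n$.

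I would then split on whether $n$ is prime. If $n$ is composite, pick a divisor $d$ of $n$ with $1<d<n$ and take the rational $\alpha=1/d$, whose continued fraction expansions are $[0;d,\infty]$ and $[0;d-1,1,\infty]$ in the paper's trailing-$\infty$ convention. Its semiconvergent denominators are then $\{1,2,\dots,d\}\cup\{1+kd:k\geq 0\}$: none of $1,\dots,d$ is a multiple of $n$ since $d<n$, and $1+kd\equiv 0\pmod n$ is impossible because $d\mid n$ would force $d\mid 1$. Hence $1/d$ is an infinite loop mod $n$ (in particular $1/2$ settles $n=4$).

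If $n=p$ is prime, so $p\geq 5$, no rational works: a rational $a/b$ with $\gcd(b,p)=1$ has, through its trailing-$\infty$ fan, semiconvergent denominators $\{mb+q_{N-1}:m\geq 0\}$ running over every residue mod $p$, while $\gcd(b,p)=p$ means $p\mid b=q_N$. So the witness must be irrational, and I would take the quadratic surd $\alpha=[0;\,p-2,\ \overline{(p-3)/2}\,]$. The point of the recurring partial quotient $A=(p-3)/2$ is that $A\equiv (p-2)-(p-2)^{-1}\pmod p$, which makes the convergent denominators a geometric progression $q_j\equiv (p-2)^j\pmod p$ in the unit $p-2$ — so no $q_j$ is divisible by $p$ — while the $m$-th semiconvergent in the $j$-th fan is $\equiv (p-2)^{j-1}\bigl(m(p-2)+1\bigr)\pmod p$, which vanishes only when $m\equiv (p+1)/2\pmod p$, a value never reached in the admissible range $1\leq m\leq (p-3)/2$ (and the $0$-th fan only produces $1,\dots,p-2$). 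Thus every semiconvergent denominator of $\alpha$ is prime to $p$.

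The main obstacle is exactly the prime case: the composite case is immediate once one observes that a denominator sharing a proper factor with $n$ traps the trailing-$\infty$ fan inside a residue coset missing $0$, but for prime $p$ one is forced to build an infinite continued fraction and to control convergent denominators \emph{and} all fan lengths modulo $p$ at once. Making $(q_j\bmod p)$ geometric is what reconciles these two constraints; the only elementary checks are $1\leq (p-3)/2<(p+1)/2$, valid for $p\geq 5$. (An alternative to the closed form is a greedy construction, maintaining the invariant $q_{j-1}+q_j\not\equiv 0\pmod p$ on the running pair of denominators and choosing $a_{j+1}$ at each step so as to preserve it; the surd above is a tidy closed form of such a choice.)
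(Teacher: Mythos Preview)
Your proof is correct, and it takes a genuinely different route from the paper's. The paper argues non-constructively: it shows that for $n\geq 4$ there is no finite chain of edges in $\Gamma_0(n)\cdot I$ connecting $0$ to $1$, by tracking only the denominators of Farey mediants modulo $n$ and observing that the subsequence $\{1,2\}$ reproduces itself under iterated mediant insertion, so the reduction algorithm never terminates; hence some $\alpha\in[0,1]$ is missed by every arc of $\Gamma_0(n)\cdot I$ and must be an infinite loop. Your argument, by contrast, is fully constructive: for composite $n$ you exhibit the rational $1/d$ (with $d\mid n$, $1<d<n$) and use that $d\mid n$ forces every trailing semi-convergent denominator $kd\pm 1$ to miss the class $0$; for prime $p\geq 5$ you exhibit the quadratic surd $[0;\,p-2,\overline{(p-3)/2}\,]$, arranged so that $q_j\equiv(-2)^j\pmod p$ and the forbidden fan index $(p+1)/2$ exceeds the maximal admissible $m=(p-3)/2$. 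What the paper's approach buys is uniformity (one argument for all $n\geq 4$, no prime/composite split) and, implicitly, that the set of infinite loops is substantial rather than a single point. What your approach buys is explicit witnesses---including an irrational, badly approximable one in the prime case, which is more directly useful for the pLC applications later in the paper---and a considerably shorter, more elementary argument once the right examples are in hand. A small remark: your opening comment that density of the complement ``forces'' an explicit construction is not quite right---the paper's non-constructive route works fine---but this does not affect the validity of your proof.
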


\begin{proof}
In order to prove this statement, it is equivalent to show that there is no finite set of edges in $\Gamma_0(n)\cdot{I}$ connecting $0$ to $1$. Note that since $\begin{psmallmatrix} 1 &1 \\ 0 & 1 \end{psmallmatrix}$ is an element in $\Gamma_0(n)$ for all $n\in\mathbb{N}$, $\alpha\in[0,1]$ is an infinite loop mod $n$ if and only if $\alpha+k$ is an infinite loop for all $k\in\mathbb{Z}_{\geq{0}}$. As a result, it is sufficient to look for infinite loops in the interval $[0,1]$. 

As seen in Lemma~\ref{infbound}, if we have two points $\frac{a}{cn}$ and $\frac{b}{d}$ in the interval $[0,1]$ which satisfy $|ad-bcn|=1$, then for all $\alpha\in\mathbb{R}_{>0}$ satisfying $\min\left\{\frac{a}{cn},\frac{b}{d}\right\}\leq{\alpha}\leq\max\left\{\frac{a}{cn},\frac{b}{d}\right\}$, the geodesic ray $\zeta_\alpha$ is not an infinite loop mod $n$. If we assume that $\frac{a}{cn}<\frac{b}{d}$ and assume that there is another point of the form $\frac{e}{nf}>\frac{b}{d}$ with $|ed-bnf|=1$, then we can further conclude that for all $\alpha\in\mathbb{R}_{>0}$ satisfying $\frac{a}{cn}\leq{\alpha}\leq\frac{e}{nf}$, the corresponding geodesic rays $\zeta_\alpha$ are not infinite loops mod $n$. If there is a finite set of edges connecting $0$ to $1$, then we can use Lemma~\ref{infbound} on each of these edges to see that there is no infinite loop mod $n$, for all $0\leq{\alpha}\leq{1}$. However, if no such finite path exists, then there must be a non-empty set of points in $[0,1]$ which do not lie between any neighbours in $\Gamma_0(n)\cdot{I}$. If $\alpha$ is one of these points, then  the  corresponding geodesic ray $\zeta_\alpha$ does not intersect $\Gamma_0(n)\cdot{I}$. Therefore, $\alpha$ is an infinite loop mod $n$.

To find this set of edges, it is equivalent to find a finite sequence of rational points between $0$ and $1$ such that each consecutive pair of rational points are neighbours in $\Gamma_0(n)\cdot{I}$. This sequence of rational numbers will be of the form $\left\{\frac{0}{1}=\frac{b_0}{d_0},\frac{a_1}{c_1n},\frac{b_1}{d_1},\ldots,\frac{a_k}{c_kn},\frac{b_k}{d_k}=\frac{1}{1}\right\}$, where $\frac{b_{i-1}}{d_{i-1}}<\frac{a_i}{c_in}<\frac{b_{i}}{d_{i}}$, $a_i,b_i,c_i,d_i\in\mathbb{N}$ and  $\gcd(n,d_i)=1$. Given two points $A$ and $B$ and a sequence of rationals $\left\{A=A_0,A_1,A_2,\ldots,A_k=B\right\}$, we will say that this sequence is \textit{a sequence of neighbours in $\Gamma_0(n)\cdot{I}$ connecting $A$ and $B$} if $A_i<A_{i+1}$ and $A_i$ and $A_{i+1}$ are all neighbours in $\Gamma_0(n)\cdot{I}$ for all $i\in\left\{0,1,\ldots,k-1\right\}$. Similarly, if we have two points $A$ and $B$ and a sequence of rationals $\left\{A=A_0,A_1,A_2,\ldots,A_k=B\right\}$, we will say that this sequence is \textit{a sequence of neighbours in $\mathcal{F}$ connecting $A$ and $B$} if $A_i<A_{i+1}$ and $A_i$ and $A_{i+1}$ are all neighbours in $\mathcal{F}$ for all $i\in\left\{0,1,\ldots,k-1\right\}$. 

Since $\Gamma_0(n)\cdot{I}$ is a sub-graph of $\mathcal{F}\cap\frac{1}{n}\mathcal{F}$, which is in turn a sub-graph of $\mathcal{F}$, each edge $E$ in the finite set of edges in $\Gamma_0(n)\cdot{I}$ connecting $0$ to $1$, must also be an edge of $\mathcal{F}$. As a result, we will start with a sequence of neighbours in $\mathcal{F}$, and insert additional Farey neighbours to this sequence, until this sequence is also a sequence of neighbours in $\Gamma_0(n)\cdot{I}$. To show that this constructs a minimal sequence of neighbours in $\Gamma_0(n)\cdot{I}$ (should a minimal sequence exist), we will use the following claim:

\noindent
\textbf{Claim:} Assume that $\frac{a}{c},\frac{b}{d}\in\mathbb{Q}\cap[0,1]$ are neighbours in $\mathcal{F}$ with $\frac{a}{c}<\frac{b}{d}$. Then any sequence of neighbours in $\mathcal{F}$ of the form $\left\{\frac{a}{c}=\frac{a_0}{c_0},\frac{a_1}{c_1},\frac{a_2}{c_2},\ldots,\frac{a_k}{c_k}=\frac{b}{d}\right\}$ satisfying $\frac{a_{i-1}}{c_{i-1}}<\frac{a_i}{c_i}<\frac{a_{i+1}}{c_{i+1}}$ must either:
\begin{enumerate}

\item Only contain the points $\left\{\frac{a}{c},\frac{b}{d}\right\}$, or
\item Contain the point $\frac{a}{c}\oplus\frac{b}{d}=\frac{a+b}{c+d}$.
\end{enumerate}

\noindent
\textit{Proof of claim.}
Since $\frac{a}{c}$ and $\frac{b}{d}$ are  neighbours in $\mathcal{F}$ we know that there is an edge $E$ in $\mathcal{F}$ connecting these points. This edge separates the plane $\mathbb{H}$ into two regions: $E_+$, containing the interval $(\frac{a}{c},\frac{b}{d})$, and $E_-$, containing the intervals $[-\infty,\frac{a}{c})$ and $(\frac{b}{d},\infty]$. The sequence of neighbours $\left\{\frac{a}{c}=\frac{a_0}{c_0},\frac{a_1}{c_1},\frac{a_2}{c_2},\ldots,\frac{a_k}{c_k}=\frac{b}{d}\right\}$, must all lie in the interval $[\frac{a}{c},\frac{b}{d}]$, since we assumed that  $\frac{a_{i-1}}{c_{i-1}}<\frac{a_i}{c_i}<\frac{a_{i+1}}{c_{i+1}}$. In particular, the edges between each of these vertices must either be contained in $E_+$ or be the edge $E$ itself, \textit{i.e.} the sequence of neighbours in $\mathcal{F}$ is just $\left\{\frac{a}{c},\frac{b}{d}\right\}$.

If this is not the case, then we can assume that the sequence of neighbours in $\mathcal{F}$, given by $\left\{\frac{a}{c}=\frac{a_0}{c_0},\frac{a_1}{c_1},\frac{a_2}{c_2},\ldots,\frac{a_k}{c_k}=\frac{b}{d}\right\}$, contains a vertex $\frac{a_j}{b_j}$ which is not $\frac{a}{c},\frac{b}{d}$ or $\frac{a+c}{b+d}$.
Since $\frac{a}{c}\oplus\frac{b}{d}=\frac{a+b}{c+d}$ is a neighbour of both $\frac{a}{c}$ and $\frac{b}{d}$ in $\mathcal{F}$, the vertices $\frac{a+b}{c+d},\frac{a}{c}$ and $\frac{b}{d}$ form a triangle in $\mathcal{F}$. Furthermore, since $\frac{a_j}{c_j}\neq{\frac{a+c}{b+d}}$, the vertex $\frac{a_j}{c_j}$ can either lie in the interval $(\frac{a}{c},\frac{a+b}{c+d})$ or  $(\frac{a+b}{c+d},\frac{b}{d})$. We assume that $\frac{a_j}{c_j}$ lies in the interval $(\frac{a}{c},\frac{a+b}{c+d})$ - a similar argument can be made if $\frac{a_j}{c_j}$ lies in the interval $(\frac{a+b}{c+d},\frac{b}{d})$. Then, we take $E'$ to be the edge between $\frac{a}{c}$ and $\frac{a+b}{c+d}$, and assume $E'_+$ is the region containing the interval $(\frac{a}{c},\frac{a+b}{c+d})$. By assumption, the vertex $\frac{a_j}{c_j}$ is contained in the region $E'_+$. In the sequence $\left\{\frac{a}{c}=\frac{a_0}{c_0},\frac{a_1}{c_1},\frac{a_2}{c_2},\ldots,\frac{a_k}{c_k}=\frac{b}{d}\right\}$, there must be a subsequence of neighbours in $\mathcal{F}$ given by $\left\{\frac{a_j}{c_j},\frac{a_{j+1}}{c_{j+1}},\ldots,\frac{a_k}{c_k}=\frac{b}{d}\right\}$ which connects $\frac{a_j}{c_j}$ to $\frac{b}{d}$. However, $\frac{a_j}{c_j}$ lies in $E'_+$ and $\frac{b}{d}$ lies in $E'_-$. As a result, the corresponding sequence of edges in $\mathcal{F}$ must either contain the point $\frac{a+c}{b+d}$ or non-trivially intersect the edge $E'$. However, since $E'$ and the sequence of edges connecting $\frac{a_j}{c_j}$ to $\frac{b}{d}$ are all edges in the Farey tessellation, none of these edges can non-trivially intersect. Therefore, the subsequence of edges must pass through the point $\frac{a+c}{b+d}$ and so, the sequence of  neighbours $\left\{\frac{a_j}{c_j},\frac{a_{j+1}}{c_{j+1}},\ldots,\frac{a_k}{c_k}=\frac{b}{d}\right\}$ must contain the point $\frac{a+c}{b+d}$. Finally, since this subsequence contains the point $\frac{a+b}{c+d}$, so must our original sequence of neighbours $\left\{\frac{a}{c}=\frac{a_0}{c_0},\frac{a_1}{c_1},\frac{a_2}{c_2},\ldots,\frac{a_k}{c_k}=\frac{b}{d}\right\}$.\hfill \textit{QED}.\par

Given two Farey neighbours  $\frac{a}{c}$ and $\frac{b}{d}$ with $\frac{a}{c}<\frac{b}{d}$, we can use this claim to construct a minimal sequence of neighbours in $\Gamma_0(n)\cdot{I}$ between these points. We denote this minimal sequence $\widetilde{V}$. Firstly, we take the sequence of neighbours in $\mathcal{F}$ given by $V_0:=\left\{\frac{a}{c},\frac{b}{d}\right\}$ to be our initial sequence.
 If $\frac{a}{c}$ and $\frac{b}{d}$ are neighbours in $\Gamma_0(n)\cdot{I}$, then we will take $\widetilde{V}=V_0$, and we are done. Otherwise, by the above claim, the set $\widetilde{V}$ must include the point $\frac{a+b}{c+d}$. We know that $\frac{a+b}{c+d}$ is a Farey neighbour of both $\frac{a}{c}$ and $\frac{b}{d}$ and $\frac{a}{c}<\frac{a+b}{c+d}<\frac{b}{d}$. As a result, we can replace our initial sequence of neighbours $V_0=\left\{\frac{a}{c},\frac{b}{d}\right\}$ with the sequence of neighbours $V_1:=\left\{\frac{a}{c}, \frac{a+b}{c+d},\frac{b}{d}\right\}$. Since each consecutive pair of vertices in $V_1$ are neighbours in $\mathcal{F}$, we can consider each pair of vertices in the set $V_1$ individually and apply the same process on each of these pairs. For example, if $\frac{a}{c}$ and $ \frac{a+b}{c+d}$ are neighbours in $\Gamma_0(n)\cdot{I}$, then we do not need to construct any more vertices between them. However, if they are not neighbours in $\Gamma_0(n)\cdot{I}$, then our sequence of neighbours in $\Gamma_0(n)\cdot{I}$ must include their Farey neighbour $\frac{2a+b}{2c+d}$. As a result, we can replace the subsequence $\left\{\frac{a}{c},\frac{a+b}{c+d}\right\}$ by the subsequence $\left\{\frac{a}{c},\frac{2a+b}{2c+d},\frac{a+b}{c+d}\right\}$. We can then apply the same procedure on the subsequence $\left\{\frac{a+b}{c+d},\frac{b}{d}\right\}$ to form our next iterated set of neighbours in $\mathcal{F}$, which we denote $V_2$. We can then perform this procedure on each pair of vertices in $V_2$ to form a new set $V_3$, and then perform this procedure on the set $V_3$, and so on. Since we only add in additional neighbours between two points $A$ and $B$ when $A$ and $B$ are not neighbours in $\Gamma_0(n)\cdot{I}$, this process will form a minimal sequence of neighbours in $\Gamma_0(n)\cdot{I}$ between the points $A$ and $B$  - provided such a sequence of vertices exist. Starting with the initial set of vertices $V_0=\left\{0,1\right\}$, the process can be described algorithmically as follows:

%If two reduced rational points are of the form $\frac{a}{c}$ and $\frac{b}{d}$, and satisfy $|ad-bc|=1$, we recall that these points are Farey neighbours. Their Farey sum $\frac{a}{c}\oplus\frac{b}{d}:=\frac{a+b}{c+d}$ is also neighbour of both $\frac{a}{c}$ and $\frac{b}{d}$. In particular, the points $\frac{a}{c},\frac{b}{d}$ and $\frac{a+b}{c+d}$ form a triangle lying $\mathcal{F}$. Furthermore, if $\frac{a}{c}<\frac{b}{d}$ lie in the interval $[0,1]$, then $\frac{a}{c}<{\frac{a+b}{c+d}}<\frac{b}{d}$. As a result, any path of edges in $\mathcal{F}$ which runs between $\frac{a}{c}$ and $\frac{b}{d}$ and has endpoints in the interval $[\frac{a}{c},\frac{b}{d}]$, must either just consist of the edge between $\frac{a}{c}$ and $\frac{b}{d}$, or the path must pass through the point $\frac{a+b}{c+d}$. This tells us that if the polygon $P$  contains the points $\frac{a}{c}$ and $\frac{b}{d}$ as vertices, but does not contain the edge between these points (i.e the edge between them is not in $\Gamma_0(n)\cdot{I}$), then $P$ must also contain $\frac{a+b}{c+d}$ as a vertex.
% Using this information, we can take $\left\{\frac{0}{1},\frac{1}{1}\right\}$ to be our initial set of vertices and use the following iterative process to find the sequence of vertices of the required form $\left\{\frac{0}{1}=\frac{b_0}{d_0},\frac{a_1}{c_1n},\frac{b_1}{d_1},\ldots,\frac{a_k}{c_kn},\frac{b_k}{d_k}=\frac{1}{1}\right\}$ in $\Gamma_0(n)\cdot{I}$.

\begin{enumerate}
\item Start with the set of vertices $V_0=\left\{\frac{0}{1},\frac{1}{1}\right\}$.
\item While $V_i$ is not of the required form, repeat the following process:
\begin{enumerate}
\item Take $V_{i+1}=\left\{\frac{0}{1}\right\}$.
\item For each pair of vertices $v_i$ and $v_{i+1}$ in $V_i$:

 If $v_i$ and $v_{i+1}$ are neighbours in $\Gamma_0(n)\cdot{I}$:
\begin{itemize}
\item Append $v_{i+1}$ onto $V_{i+1}$
\end{itemize}
Otherwise:
\begin{itemize}
\item Append $w=v_i\oplus{v_{i+1}}$ onto $V_{i+1}$.
\item Append $v_{i+1}$ onto $V_{i+1}$.
\end{itemize}
\end{enumerate}
\item End of algorithm.
\end{enumerate}

If we take $n=2$ then we have:
\begin{gather*}
V_0 =\left\{\frac{0}{1},\frac{1}{1}\right\},\\
V_1 = \left\{\frac{0}{1},\frac{1}{2},\frac{1}{1}\right\}.
\end{gather*}
At which point the process stops.

If we instead take $n=3$ then we have:
\begin{gather*}
V_0 =\left\{\frac{0}{1},\frac{1}{1}\right\},\\
V_1 = \left\{\frac{0}{1},\frac{1}{2},\frac{1}{1}\right\},\\
V_2 =\left\{\frac{0}{1},\frac{1}{3},\frac{1}{2},\frac{2}{3},\frac{1}{1}\right\}.
\end{gather*}
Again, the process stops at this point.

However, for $n=5$, we have:
\begin{gather*}
V_0 =\left\{\frac{0}{1},\frac{1}{1}\right\},\\
V_1 = \left\{\frac{0}{1},\frac{1}{2},\frac{1}{1}\right\},\\
V_2 =\left\{\frac{0}{1},\frac{1}{3},\frac{1}{2},\frac{2}{3},\frac{1}{1}\right\},\\
V_3 =\left\{\frac{0}{1},\frac{1}{4},\frac{1}{3},\frac{2}{5},\frac{1}{2},\frac{3}{5},\frac{2}{3},\frac{3}{4},\frac{1}{1}\right\},\\
V_4 =\left\{\frac{0}{1},\frac{1}{5},\frac{1}{4},\frac{2}{7},\frac{1}{3},\frac{2}{5},\frac{1}{2},\frac{3}{5},\frac{2}{3},\frac{5}{7},\frac{3}{4},\frac{4}{5},\frac{1}{1}\right\},\\
\ldots
\end{gather*}
%\begin{rem}For each pair of neighbours in $V_i$ the above process ``resolves'' this pair of vertices in a minimal way. As previously stated, any path between two neighbours of $A<B$ in $\mathcal{F}$ which satisfy the above conditions and do not take the edge between $A$ and $B$, must pass through the Farey neighbour $A\oplus{B}$. Since the above process repeatedly takes Farey neighbours until these neighbours are of the appropriate form, this process applied to the initial set $V_0:=\left\{A,B\right\}$ the minimal set of neighbours in $\Gamma_0(n)\cdot{I}$ between any two neighbours $A$ and $B$ in $\mathcal{F}$.
%\end{rem} 
Given two points $\frac{a}{c}$ and $\frac{b}{d}$, which are neighbours in $\mathcal{F}$, we can see from Lemma~\ref{lem1} that $\frac{a}{c}$ and $\frac{b}{d}$ are neighbours in $\Gamma_0(n)\cdot{I}$ if and only if exactly one of $c\equiv{0}\mod{n}$ or $d\equiv{0}\mod{n}$. Here, we should note that we can not have that both $c\equiv{0}\mod{n}$ and $d\equiv{0}\mod{n}$, since we know that $\gcd(c,d)=1$. 
In particular, assuming the points $\frac{a}{c}$ and $\frac{b}{d}$ are Farey neighbours, we only need to know the value of $c$ and $d$ mod $n$ to be able to tell if they are neighbours in $\Gamma_0(n)\cdot{I}$.  As a result, for us to construct a finite sequence of neighbours in $\Gamma_0(n)\cdot{I}$ of the form $\left\{\frac{0}{1}=\frac{b_0}{d_0},\frac{a_1}{c_1n},\frac{b_1}{d_1},\ldots,\frac{a_k}{c_kn},\frac{b_k}{d_k}=\frac{1}{1}\right\}$ it is a necessary condition that the sequence of denominators (taken $\mod{n}$) is of the form $\left\{\overline{d_0}=1,0,\overline{d_1},0,\ldots,0,\overline{d_k}=1\right\}$ where each $\overline{d_i}\in\left\{1,\ldots,n-1\right\}$. As a result, if we wish to show that the sequence of neighbours in $\Gamma_0(n)\cdot{I}$ of the form $\left\{\frac{0}{1}=\frac{b_0}{d_0},\frac{a_1}{c_1n},\frac{b_1}{d_1},\ldots,\frac{a_k}{c_kn},\frac{b_k}{d_k}=\frac{1}{1}\right\}$ does not exist, then it is sufficient to show that the corresponding sequence $\left\{\overline{d_0}=1,0,\overline{d_1},0,\ldots,0,\overline{d_k}=1\right\}$ does not exist.

If we start with two points $\frac{a}{c}$ and $\frac{b}{d}$ which are Farey neighbours, we can replace the sequence $V_0:=\left\{\frac{a}{c},\frac{b}{d}\right\}$ with the sequence $D_0:=\left\{\overline{c},\overline{d}\right\}$, where $\overline{c}\equiv{c}\mod{n}$, $\overline{d}\equiv{d}\mod{n}$ and $\overline{c},\overline{d}\in\left\{0,1,\ldots,n-1\right\}$. If one of $\overline{c}=0$ or $\overline{d}=0$, then we are done. Otherwise, $\frac{a}{c}$ and $\frac{b}{d}$ are not neighbours in $\Gamma_0(n)\cdot{I}$. In this case, we would replace the sequence $V_0:=\left\{\frac{a}{c},\frac{b}{d}\right\}$ with the sequence $V_1:=\left\{\frac{a}{c},\frac{a+b}{c+d},\frac{b}{d}\right\}$, and so we analogously replace the sequence $D_0:=\left\{\overline{c},\overline{d}\right\}$ with the sequence $D_1:=\left\{\overline{c},\overline{c+d},\overline{d}\right\}$, where $\overline{c+d}\equiv{c+d}\mod n$ and $\overline{c+d}\in\left\{0,1,\ldots,n-1\right\}$. If $\overline{c+d}=0$, then we are done. Otherwise, we can consider each consecutive pair in $D_1$ and perform the same procedure on each pair, \textit{i.e.} we perform the same procedure on $\left\{\overline{c},\overline{c+d}\right\}$ and $\left\{\overline{c+d},\overline{d}\right\}$. Iterating this procedure, we  can form a new algorithm to find a sequence of denominators of the required form $\left\{d_0=\overline{c},0,d_1,0,\ldots,0,d_k=\overline{d}\right\}$, where each $d_i\in\left\{1,\ldots,n-1\right\}$. For our initial set being $D_0:=\left\{1,1\right\}$ (corresponding to the set $V_0:=\left\{\frac{0}{1},\frac{1}{1}\right\}$),  the above procedure is described by the following algorithm.
% In particular, for us to be able to construct a finite sequence of vertices $\left\{\frac{0}{1}=\frac{b_0}{d_0},\frac{a_1}{c_1n},\frac{b_1}{d_1},\ldots,\frac{a_k}{c_kn},\frac{b_k}{d_k}=\frac{1}{1}\right\}$ such that each consecutive pair of vertices are pairs in $\Gamma_0(n)\cdot{I}$, we must necessarily be able to find a sequence of denominators of the form $\left\{d_0=1,0,d_1,0,\ldots,0,d_k=1\right\}$ where each $d_i\in\left\{1,\ldots,n-1\right\}$. Since our initial sequence was only really dependent on conditions pertaining to the denominators, the following process as defined on the sequence of denominators is equivalent the previous process defined on fractions.

\begin{enumerate}
\item Start with the set of denominators $D_0=\left\{1,1\right\}$.
\item While $D_i$ is not of the required form, repeat the following process:
\begin{enumerate}
\item Take $D_{i+1}=\left\{1\right\}$.
\item For each pair of denominators $d_i$ and $d_{i+1}$ in $D_i$:

 If $d_i$ and $d_{i+1}$ are neighbours in $\Gamma_0(n)\cdot{I}$:
\begin{itemize}
\item Append $d_{i+1}$ onto $D_{i+1}$
\end{itemize}
Otherwise:
\begin{itemize}
\item Append $e=d_i+{d_{i+1}}\mod{n}$ onto $D_{i+1}$.
\item Append $d_{i+1}$ onto $D_{i+1}$.
\end{itemize}
\end{enumerate}
\item End of algorithm.
\end{enumerate}
For example, for $n=5$ we would have:
\begin{gather*}
D_0 =\left\{{1},{1}\right\},\\
D_1 = \left\{{1},{2},{1}\right\},\\
D_2 =\left\{{1},{3},{2},{3},{1}\right\},\\
D_3 =\left\{{1},{4},{3},0,{2},0,{3},{4},{1}\right\},\\
D_4 =\left\{{1},0,{4},2,{3},0,{2},0,{3},2,4,0,{1}\right\},\\
D_5 =\left\{{1},0,{4},1,2,0,{3},0,{2},0,{3},0,2,1,4,0,{1}\right\},\\
\ldots
\end{gather*}
%Note that being able to find a finite set of denominators of the above form is a necessary condition to being find a finite set of vertices of the form $\left\{\frac{0}{1}=\frac{b_0}{d_0},\frac{a_1}{c_1n},\frac{b_1}{d_1},\ldots,\frac{a_k}{c_kn},\frac{b_k}{d_k}=\frac{1}{1}\right\}$. As a result, if we can not find a finite sequence of denominators of the form $\left\{d_0=1,0,d_1,0,\ldots,0,d_k=1\right\}$ where each $d_i\in\left\{1,\ldots,n-1\right\}$, we can conclude that we can not find a finite sequence of vertices $\left\{\frac{0}{1}=\frac{b_0}{d_0},\frac{a_1}{c_1n},\frac{b_1}{d_1},\ldots,\frac{a_k}{c_kn},\frac{b_k}{d_k}=\frac{1}{1}\right\}$
For $n>2$, we can always guarantee that the above process does not terminate  after the first iteration, and so, the above process creates the set $D_1=\left\{1,2,1\right\}$. Furthermore, for an arbitrary $n>3$, we can perform iterative Farey sums between the sub-sequence $\left\{1,2\right\}$ to obtain the sequence $\left\{1,0,n-1,n-2,\ldots,2\right\}$, and this sequence does not simply reduce to $\left\{1,0,2\right\}$, since  $n-1\neq{2}\mod n$ for $n>3$. If we perform the same process on the sub-sequence $\left\{n-1,n-2\right\}$ mod $n$, we obtain the sequence $\left\{n-1,0,1,2,\ldots,n-3,n-2\right\}$. Combining together these sequences, we see that iteratively performing the procedure on sub-sequence $\left\{1,2\right\}$ produces the sequence $\left\{1,0,n-1,0,1,2,\ldots,n-3,n-2,\ldots,2\right\}$. However, the sequence $\left\{1,0,n-1,0,1,2,\ldots,n-3,n-2,\ldots,2\right\}$  contains the sub-sequence $\left\{1,2\right\}$. This in turn implies that for $n>3$ we can not resolve any sub-sequence of the form $\left\{1,2\right\}$, since any attempt to do so produces another sub-sequence of the form $\left\{1,2\right\}$.  As a result, for $n>3$  we can not find a finite sequence of denominators: $$\left\{\overline{d_0}=1,0,\overline{d_1},0,\ldots,0,\overline{d_k}=1\right\}$$ corresponding to the finite sequence of neighbours in $\Gamma_0(n)\cdot{I}$ of the form: $$\left\{\frac{0}{1}=\frac{b_0}{d_0},\frac{a_1}{c_1n},\frac{b_1}{d_1},\ldots,\frac{a_k}{c_kn},\frac{b_k}{d_k}=\frac{1}{1}\right\}.$$ In particular, no such sequence of neighbours in $\Gamma_0(n)\cdot{I}$  can exist, for  $n>3$. Finally, this implies that there are infinite loops mod $n$ for all $n>3$.
\end{proof}

\subsection{\texorpdfstring{Infinite Loops and the $p$-adic Littlewood Conjecture}{Infinite Loops and the p-adic Littlewood Conjecture}}\label{ILpLC2}

We start this section by restating the $p$-adic Littlewood Conjecture and Corollary~\ref{uplow2}:

\begin{pconj}
For every real number $\alpha\in\mathbb{R}$, we have:
$$m_p(\alpha):=\inf\limits_{q\in\mathbb{N}}\left\{q\cdot|q|_p\cdot\|q\alpha\|\right\}=0.$$
\end{pconj}

\begin{corn}[\ref{uplow2}]
Every real number $\alpha\in\mathbb{R}\setminus\mathbb{Q}$ satisfies the following inequality:
$$\inf_{\ell\in\mathbb{N}}\frac{1}{B(p^\ell\alpha)+2}<m_{p}(\alpha)<\inf_{\ell\in\mathbb{N}}\frac{1}{B(p^\ell\alpha)}.$$
In particular, if $\alpha\in\mathbb{R}\setminus\mathbb{Q}$, then $\alpha$ satisfies pLC if and only if:
$$\sup\limits_{\ell\in\mathbb{N}\cup\{0\}}B(p^\ell\alpha)=\infty.$$
\end{corn}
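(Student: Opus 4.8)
The plan is to read both assertions off Corollary~\ref{uplowcor} together with the reduction $m_p(\alpha)=\inf_{\ell\in\mathbb{N}\cup\{0\}}c(p^\ell\alpha)$ recorded in the text just above, so no new machinery is required. First I would observe that $\alpha\in\mathbb{R}\setminus\mathbb{Q}$ forces $p^\ell\alpha\in\mathbb{R}\setminus\mathbb{Q}$ for every $\ell$, so Corollary~\ref{uplowcor} applies to each dilate and gives $\frac{1}{B(p^\ell\alpha)+2}<c(p^\ell\alpha)<\frac{1}{B(p^\ell\alpha)}$ for all $\ell$. Taking the infimum over $\ell$ and substituting $m_p(\alpha)=\inf_\ell c(p^\ell\alpha)$ yields
$$\inf_{\ell}\frac{1}{B(p^\ell\alpha)+2}\le m_p(\alpha)\le\inf_{\ell}\frac{1}{B(p^\ell\alpha)}.$$
The right-hand inequality is strict whenever $\sup_\ell B(p^\ell\alpha)<\infty$, since this supremum is then attained at some $\ell_0$ and $m_p(\alpha)\le c(p^{\ell_0}\alpha)<1/B(p^{\ell_0}\alpha)=\inf_\ell 1/B(p^\ell\alpha)$; the left-hand one is strict in that case too, using in addition that $c(\beta)$ stays bounded away from $\frac{1}{B(\beta)+2}$ for each fixed value of $B(\beta)$ (a fact one extracts from the Lagrange-spectrum formula $c(\beta)=1/\sup_k\lambda_k$), so that $m_p(\alpha)$ is kept strictly above $\frac{1}{\sup_\ell B(p^\ell\alpha)+2}=\inf_\ell\frac{1}{B(p^\ell\alpha)+2}$. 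When $\sup_\ell B(p^\ell\alpha)=\infty$, all three quantities are $0$, which is exactly what the second assertion records.

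For the reformulation I would prove $m_p(\alpha)=0\iff\sup_\ell B(p^\ell\alpha)=\infty$. Since $B(p^\ell\alpha)\ge 1$ gives $\frac{1}{B(p^\ell\alpha)}\le\frac{3}{B(p^\ell\alpha)+2}$ for each $\ell$, the two bounding infima in the display vanish simultaneously, so the squeeze gives $m_p(\alpha)=0\iff\inf_\ell\frac{1}{B(p^\ell\alpha)}=0$, and the latter holds precisely when the heights $B(p^\ell\alpha)$ are unbounded in $\ell$ (adopting the convention $B(\beta)=\infty$, equivalently $c(\beta)=0$, for $\beta\notin\textbf{Bad}$, so the argument is insensitive to some dilate being well approximable). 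Since ``$\alpha$ satisfies pLC'' means exactly $m_p(\alpha)=0$, this is the claimed characterisation.

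I do not expect a genuine obstacle here: the corollary is a formal consequence of Corollary~\ref{uplowcor} and of the already-recorded identity $m_p(\alpha)=\inf_\ell c(p^\ell\alpha)$. The one point that warrants a word of care is the passage from the termwise strict inequalities of Corollary~\ref{uplowcor} to strict inequalities after taking infima --- which is why the unbounded-height case (equivalently, $\alpha$ satisfying pLC) must be singled out, the outer quantities being $0$ there; all the substantive input has already been supplied earlier in the excerpt.
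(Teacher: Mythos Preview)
Your approach matches the paper's: the corollary is stated there without proof, simply as the consequence of Corollary~\ref{uplowcor} combined with the identity $m_p(\alpha)=\inf_{\ell}c(p^\ell\alpha)$ recorded just before it, and that is exactly what you carry out. Your extra care about whether the strict inequalities survive the passage to infima in fact goes beyond what the paper supplies (and you are right that when $\sup_\ell B(p^\ell\alpha)=\infty$ the display degenerates to $0=0=0$, so the inequalities should be read as non-strict in that case; the paper is slightly informal on this point).
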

 
As seen in the previous section, infinite loops mod $n$ behave ``badly'' when multiplied by $n$. In fact, infinite loops mod $n$ behave even worse when $n=p^\ell$. Since the $p$-adic Littlewood Conjecture is very closely related to the behaviour of the continued fractions expansions $\left\{\overline{p^m\alpha}:m\in\mathbb{N}\cup\left\{0\right\}\right\}$, it seems very natural that investigating  infinite loops mod $p^\ell$ may tell us something non-trivial about the $p$-adic Littlewood Conjecture. Our first confirmation of this fact, comes from the next lemma and its corollary.

This lemma can be viewed as a slightly weaker version of Proposition~\ref{pro2}. Instead of assuming $\alpha$ has a convergent denominator divisible by $n$, we assume that $\alpha$ is not an infinite loop mod $n$, \textit{i.e.} it has a semi-convergent denominator divisible by $n$. This lemma essentially states that if $\alpha$ is not an infinite loop mod $n$, then $B(\alpha)$ and $B(n\alpha)$ can not both be simultaneously small relative to $\sqrt{n}$.

\begin{lem}\label{noloop}
Assume that $\alpha\in\mathbb{R}_{>0}$ is not an infinite loop mod $n$. Then:
$$\max\left\{B(\alpha),B(n\alpha)\right\}\geq{\left\lfloor{2\sqrt{n}}\right\rfloor{-1}},$$

where $\lfloor\cdot\rfloor$ is the standard floor function.
\end{lem}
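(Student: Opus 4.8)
The plan is to combine the combinatorial content of non-loopness with a two-sided use of Theorem~\ref{uplow}. From Definition~\ref{infloopdef}, $\zeta_\alpha$ meets an edge $E\in\Gamma_0(n)\cdot I$ that is not a translate $I+k$; by Corollary~\ref{edgesemiconv} the endpoints of $E$ are a convergent $\frac{p_k}{q_k}$ and a semi-convergent $\frac{p_{\{k,m\}}}{q_{\{k,m\}}}$ of $\alpha$, with $\alpha$ lying in the closed arc between them (as $E$ separates $I$ from $\alpha$), and by Lemma~\ref{lem1} exactly one of the two reduced denominators is divisible by $n$. I would first dispose of the easy cases: if $\alpha$ equals an endpoint (so $\alpha\in\mathbb{Q}$) one argues directly; if $n\mid q_k$, write $q_k=cn$ and apply Proposition~\ref{pro2} to get $B(n\alpha)\ge na_{k+1}\ge n$ (in the sub-case $c=1$ instead note that Theorem~\ref{uplow} gives $|n\alpha-p_k|=n|\alpha-\frac{p_k}{n}|<\frac1{a_{k+1}n}$, so the first partial quotient of $n\alpha$ is $\ge n$), and since $n=(\sqrt n-1)^2+(2\sqrt n-1)\ge\lfloor 2\sqrt n\rfloor-1$ this case is done.

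So the real work is when $n\mid q_{\{k,m\}}$, say $q_{\{k,m\}}=cn$. Using $q_{\{k,m\}}=mq_k+q_{k-1}$ and that $\zeta_\alpha$ crosses every edge of the fan, I would take $m$ to be the least admissible index, so $1\le m\le n-1$; if moreover $m\notin\{1,\dots,a_{k+1}-1\}$ then the semi-convergent is in fact a convergent and we are back in the previous case, and if $a_{k+1}\ge n$ then $B(\alpha)\ge a_{k+1}\ge n$ and we are done, so assume $1\le m\le a_{k+1}-1\le n-2$. Then $c=\frac{mq_k+q_{k-1}}{n}<q_k$. Since $\alpha$ lies between $\frac{p_k}{q_k}$ and $\frac{p_{\{k,m\}}}{q_{\{k,m\}}}$, which are at distance $\frac1{q_k\cdot cn}$ (the determinant identity in the proof of Corollary~\ref{semiconvedge}), Theorem~\ref{uplow} gives $\frac1{(a_{k+1}+2)q_k^2}<|\alpha-\frac{p_k}{q_k}|<\frac1{q_kcn}$, hence $cn<(a_{k+1}+2)q_k$. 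Applying the scaling $n^*$, the ray to $n\alpha$ crosses the Farey edge between $\frac{np_k}{q_k}$ (denominator $q_k$) and $\frac{p_{\{k,m\}}}{c}$ (denominator $c$), again at distance $\frac1{q_kc}$, so $n\alpha$ lies between these two points; because $c<q_k$, Corollary~\ref{edgesemiconv} together with the fan bookkeeping in the proof of Corollary~\ref{semiconvedge} forces $\frac{p_{\{k,m\}}}{c}$ to be a genuine \emph{convergent} of $n\alpha$, with some subsequent partial quotient $b$.

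Now apply Theorem~\ref{uplow} to $n\alpha$: at $\frac{np_k}{q_k}$ it gives $|n\alpha-\frac{np_k}{q_k}|=n|\alpha-\frac{p_k}{q_k}|>\frac n{(a_{k+1}+2)q_k^2}$, and at $\frac{p_{\{k,m\}}}{c}$ it gives $|n\alpha-\frac{p_{\{k,m\}}}{c}|>\frac1{(b+2)c^2}$; adding these and using that the two distances sum to $\frac1{q_kc}$,
$$\frac1{(b+2)c^2}<\frac1{q_kc}-\frac n{(a_{k+1}+2)q_k^2}.$$
Putting $X=a_{k+1}+2$, $Y=b+2$, and $u=\frac{cn}{q_k}=m+\frac{q_{k-1}}{q_k}$ (so $0<u<a_{k+1}<X$ by the inequality $cn<(a_{k+1}+2)q_k$ above), this rearranges to $n<Y\,u\,(1-\frac uX)$. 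Since $\max_{u} u(1-\frac uX)=\frac X4$, we get $XY>4n$, hence $\max\{X,Y\}>2\sqrt n$, i.e. $\max\{a_{k+1},b\}>2\sqrt n-2$; as this is an integer and a lower bound for $\max\{B(\alpha),B(n\alpha)\}$, adding $2$ gives $\max\{a_{k+1},b\}+2>\lfloor 2\sqrt n\rfloor$, so $\max\{B(\alpha),B(n\alpha)\}\ge\lfloor 2\sqrt n\rfloor-1$.

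The step I expect to be most delicate is the assertion that $\frac{p_{\{k,m\}}}{c}$ is a convergent --- not merely a semi-convergent --- of $n\alpha$, since Theorem~\ref{uplow} is only available at convergents; this is exactly where $c<q_k$ is needed, via the description in the proof of Corollary~\ref{semiconvedge} of how denominators grow along a fan. The other loose ends --- the rational-endpoint cases, the sub-case $q_k=n$, and checking that $a_{k+1}$ and $b$ occur at indices $\ge1$ so that they count towards $B(\cdot)$ --- are routine, and the extra factor of $2$ over the naive $\sqrt n$ bound is precisely the elementary fact $\max_u u(1-\frac uX)=\frac X4$.
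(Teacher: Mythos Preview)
Your argument is correct and the delicate step you flag --- that the smaller-denominator endpoint $\frac{p_{\{k,m\}}}{c}$ of the scaled edge must be a genuine convergent of $n\alpha$ --- does go through: on any Farey edge crossed by $\zeta_{n\alpha}$, one endpoint is the fixed vertex of a fan (a convergent $P_j/Q_j$) and the other has denominator $iQ_j+Q_{j-1}$; since $c<q_k$, either $c=Q_j$ directly, or $c=iQ_j+Q_{j-1}<Q_j$ forces $i=0$ and $c=Q_{j-1}$, again a convergent. With that in hand, your chain $\frac{n}{Xq_k^2}+\frac{1}{Yc^2}<\frac{1}{q_kc}\Rightarrow n<Yu(1-u/X)\le \frac{XY}{4}$ is clean, and the integrality step to $\lfloor 2\sqrt n\rfloor-1$ is fine.

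This is a genuinely different route from the paper's. The paper normalises geometrically: it picks $\phi\in\Gamma_0(n)$ with $\zeta_\alpha$ crossing $\phi\cdot I$, pulls back by $\phi^{-1}$ so the crossing is at $I$ itself, and then \emph{counts fan sizes} on either side --- $r_\alpha,s_\alpha$ in $\mathcal F$ versus $R_\alpha=\lfloor n/(r_\alpha+1)\rfloor$, $S_\alpha=\lfloor n/(s_\alpha+1)\rfloor$ in $\frac1n\mathcal F$ --- and optimises $f(x)=\frac{n}{x+1}+\frac{n}{\lfloor 2\sqrt n\rfloor-x-1}$. You instead stay with the original edge and extract the same trade-off analytically via two applications of Theorem~\ref{uplow}, one for $\alpha$ and one for $n\alpha$; the ``$+2$'' in Theorem~\ref{uplow} is doing the work that the paper's two-sided fan count $r_\alpha+s_\alpha$ does. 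Your approach is more arithmetic and avoids the $\Gamma_0(n)$-normalisation and the explicit picture of $\frac1n\mathcal F$ near $I$; the paper's approach makes the mechanism (large fan in $\mathcal F$ $\leftrightarrow$ small fan in $\frac1n\mathcal F$) more visible and yields the intermediate quantities $R_\alpha,S_\alpha$ that feed into the algorithmic work in Part~II. The optimisations are formally the same: your $u(1-u/X)\le X/4$ is the paper's observation that $f$ is minimised at the midpoint.

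Minor tidy-ups: in the sub-case $q_k=n$ your sentence ``the first partial quotient of $n\alpha$ is $\ge n$'' is not quite right when $n\alpha$ lies just \emph{below} $p_k$ (the first partial quotient is then $1$), but the second one is large and the conclusion $B(n\alpha)\ge n$ still holds; and ``least admissible index'' for $m$ is unnecessary --- you only use $1\le m\le a_{k+1}-1$.
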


\begin{proof}
Assume $\alpha$ is not an infinite loop mod $n$ and let $\zeta_\alpha$ be the associated geodesic ray in $\mathbb{H}$. Since $\alpha$ is not an infinite loop mod $n$, there is an element $\phi\in\Gamma_0(n)$, where $\phi$ is not the identity, such that $\zeta_\alpha$ intersects the edge $\phi\cdot{I}$. We can apply the map $\phi^{-1}$ to the whole of $\mathbb{H}$ such that $\phi^{-1}\cdot\zeta_\alpha$ intersecting $\mathcal{F}$ resembles Fig.~\ref{nonloop} (a) - up to taking a mirror image in the $y$-axis. Taking a mirror image has no affect on this argument other than to swap the roles of left and right fans. As such, we shall assume that we are oriented as in the figure.

\begin{figure}[htbp]
\centering
\begin{subfigure}{.65\textwidth}
  \centering
  \includegraphics[width=1\linewidth]{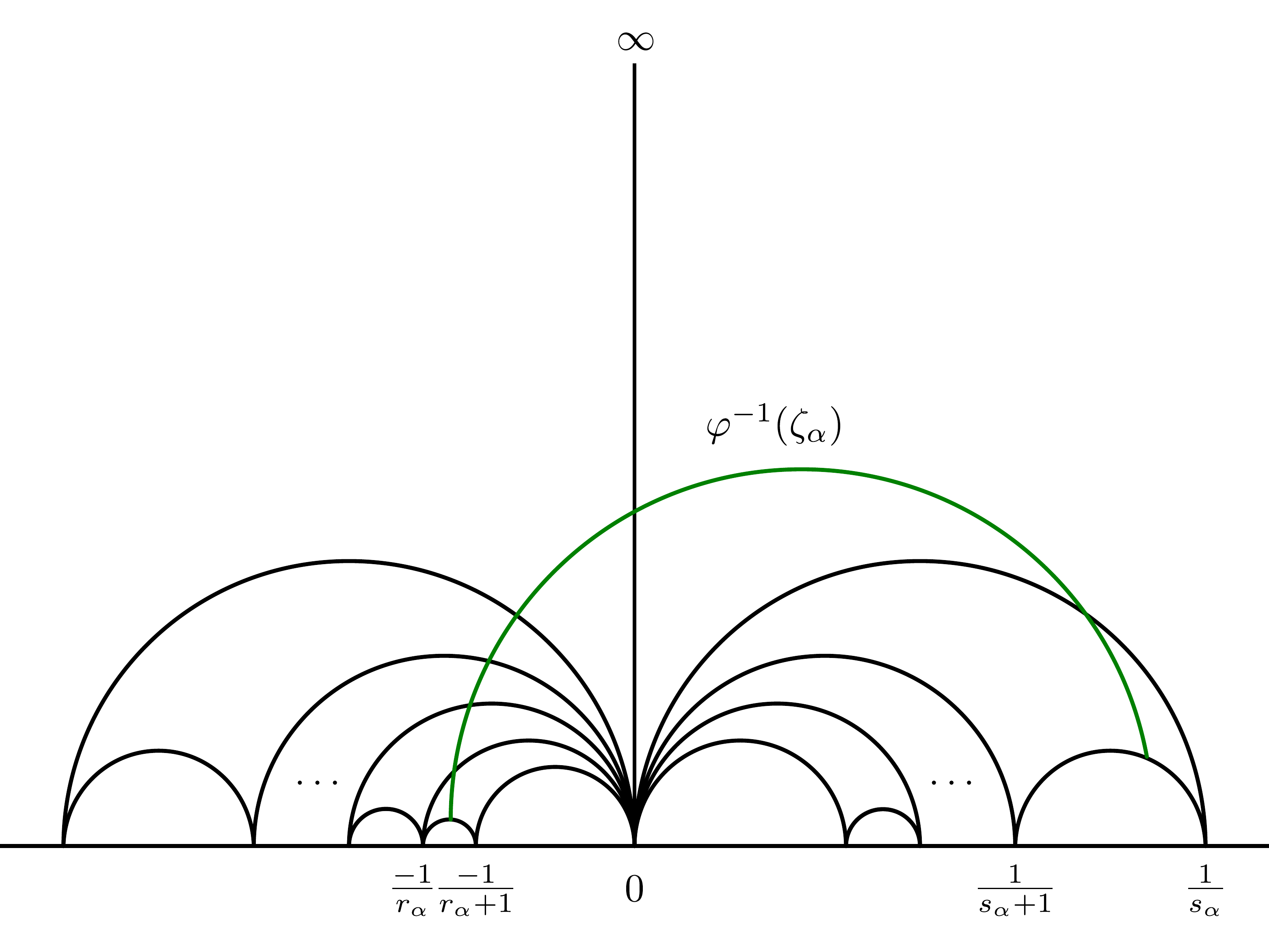}
  \caption{An example of a geodesic $\phi^{-1}\cdot\zeta_\alpha$ approaching $I$ by a right fan of size $r_\alpha$ in $\mathcal{F}$ and leaving $I$ via a fan of size $s_\alpha$. This results in a fan of size $(r_\alpha+s_\alpha)$.}
  \label{fig:sub1}
\end{subfigure}%

\begin{subfigure}{.65\textwidth}
  \centering
  \includegraphics[width=1\linewidth]{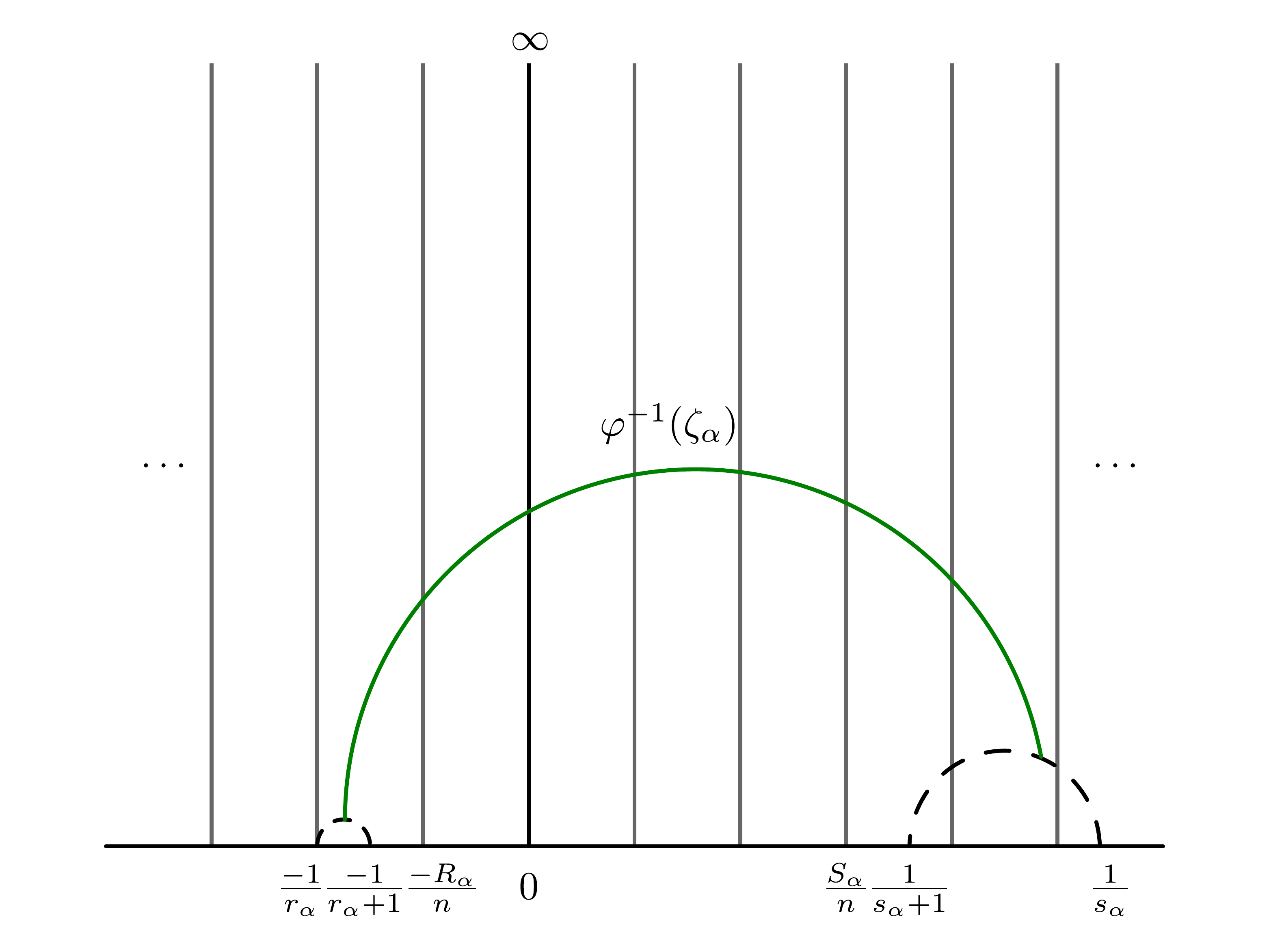}
  \caption{An example of how the geodesic $\phi^{-1}\cdot\zeta_\alpha$ intersects $\frac{1}{n}\mathcal{F}$. The lines between $\frac{-R_\alpha}{n}$ and $\frac{S_\alpha}{n}$ are necessarily intersected by $\phi^{-1}\cdot\zeta_\alpha$ and this results in a fan of size $\geq{(R_\alpha+S_\alpha)}$.}
  \label{fig:sub2}
\end{subfigure}
\caption{An example of a how geodesic ray $\zeta_\alpha$, which is not an infinite loop mod $n$,  intersects both $\mathcal{F}$, (a), and $\frac{1}{n}\mathcal{F}$, (b). This is considered up to re-framing by $\Gamma_0(n)$. }
\label{nonloop}
\end{figure}

We assume that the geodesic ray $\phi^{-1}\cdot\zeta_\alpha$ approaches the $y$-axis $I$ by a right fan of size $r_\alpha\in\mathbb{N}\cup\left\{0\right\}$ and leaves by a right fan of size $s_\alpha\in\mathbb{N}\cup\left\{0\right\}$. Here, we allow these fans to be of size $0$, however, in this case we interpret this fan to be a left fan. In this case, $\phi\cdot\zeta_\alpha$ either intersects $I$ and $I-1$ (when $r_\alpha=0$), or it intersects $I$ and $I+1$ (when $s_\alpha$=0). In either case, the point at infinity is a fixed point of this fan. This tells us that $\phi\cdot\infty$ is a convergent of $\alpha$. This point $\phi\cdot\infty$ will be of the form $\frac{a}{nc}$ and so by Proposition~\ref{pro2}, this induces $B(n\alpha)\geq{n}$ - in which case the result follows.

We therefore assume $r_\alpha,s_\alpha\geq{1}$ and note that $\phi^{-1}\cdot\zeta_\alpha$ approaches the $y$-axis from a value less than $-[0;r_\alpha,1]=\frac{-1}{r_\alpha+1}$.  Similarly, we can assume that $\phi^{-1}\cdot\zeta_\alpha$ departs the $y$-axis and approaches a point greater than $[0;s_\alpha,1]=\frac{1}{s_\alpha+1}$. Since $\frac{1}{n}\mathcal{F}$ has vertices between $\frac{i}{n}$ and $\infty$ for all $i\in\mathbb{N}$, we can ask how many of these lines the geodesic ray $\phi^{-1}\cdot\zeta_\alpha$ intersects in this neighbourhood. We see that there is some number $R_\alpha\in\mathbb{N}\cup\left\{0\right\}$ such that $\frac{-R_\alpha-1}{n}\leq{\frac{-1}{r_\alpha+1}\leq{\frac{R_\alpha}{n}}}$. One can then guarantee that $\phi^{-1}\cdot\zeta_\alpha$ intersects a left fan in $\frac{1}{n}\mathcal{F}$ of size at least $R_\alpha$  directly before approaching the $y$-axis. Note that here the value $R_\alpha$ can be defined as $R_\alpha:=\left\lfloor\frac{n}{r_\alpha+1}\right\rfloor$. By a similar process we can see that $\phi^{-1}\cdot\zeta_\alpha$ intersects a left fan in $\frac{1}{n}\mathcal{F}$ of size at least $S_\alpha:=\left\lfloor\frac{n}{s_\alpha+1}\right\rfloor$ in $\frac{1}{n}\mathcal{F}$ directly after leaving the $y$-axis, see Fig.~\ref{nonloop} (b). These fans concatenate to form a fan of size $R_\alpha+S_\alpha$ in $\frac{1}{n}\mathcal{F}$. Therefore, we know that $\overline{\alpha}$ has a term of size at least $r_\alpha+s_\alpha$ and $\overline{n\alpha}$
has a term of size at least $R_\alpha+S_\alpha$. We conclude that $B(\alpha)\geq{r_\alpha+s_\alpha}$ and $B(n\alpha)\geq{R_\alpha+S_\alpha}$, and by extension $\text{max}\left\{B(\alpha),B(n\alpha)\right\}\geq{\text{max}\left\{r_\alpha+s_\alpha,\,R_\alpha+S_\alpha\right\}}$.\\

 We assume that $r_\alpha+s_\alpha\leq\left\lfloor{2\sqrt{n}}\right\rfloor-2$, since otherwise we would have $B(\alpha)\geq{}\left\lfloor{2\sqrt{n}}\right\rfloor-1$.
If we fix $0\leq{r_\alpha}\leq\left\lfloor{2\sqrt{n}}\right\rfloor-2$, then  $0\leq{s_\alpha}\leq\left\lfloor{2\sqrt{n}}\right\rfloor-2-r_\alpha$. For all $s_\alpha$ in this range, we note:
$$ S_\alpha=\left\lfloor\frac{n}{s_\alpha+1}\right\rfloor\geq\left\lfloor\frac{n}{\left\lfloor{2\sqrt{n}}\right\rfloor-2-r_\alpha+1}\right\rfloor=\left\lfloor\frac{n}{\left\lfloor{2\sqrt{n}}\right\rfloor-r_\alpha-1
}\right\rfloor,$$

and:
\begin{align*}
R_\alpha+S_\alpha=\left\lfloor\frac{n}{r_\alpha+1}\right\rfloor+\left\lfloor\frac{n}{s_\alpha+1}\right\rfloor\geq{}&\left\lfloor\frac{n}{r_\alpha+1}\right\rfloor+\left\lfloor\frac{n}{\left\lfloor{2\sqrt{n}}\right\rfloor-r_\alpha-1}\right\rfloor\\
\geq{}&\left\lfloor\frac{n}{r_\alpha+1}+\frac{n}{\left\lfloor{2\sqrt{n}}\right\rfloor-r_\alpha-1}\right\rfloor-1.
\end{align*}

We can find a lower bound estimation for this by considering the following equation:
$$ \text{f(x)}:=\frac{n}{x+1}+\frac{n}{\left\lfloor{2\sqrt{n}}\right\rfloor-x-1}, \text{ for x} \in \left[0,\lfloor{2\sqrt{n}}\rfloor-2\right], $$

and noting that $\left\lfloor{\text{f(x)}}\right\rfloor$ is minimised when $\text{f(x)}$ is minimised. The derivative of $\text{f(x)}$ is given by:
$$ \text{f}'\text{(x)}=n\left(\frac{-1}{(x+1)^2}+\frac{1}{(\left\lfloor{2\sqrt{n}}\right\rfloor-x-1)^2} \right). $$

\noindent
Note that we can write $\left\lfloor{2\sqrt{n}}\right\rfloor=2\left\lfloor{\sqrt{n}}\right\rfloor+\delta$ where $\delta=0,1$.

\noindent
\textbf{1.  Assume $\delta=0$:}

In this case, $\text{f}'\text{(x)}=0$ if and only if $x=\left\lfloor{\sqrt{n}}\right\rfloor-1$, and so, $x=\left\lfloor{\sqrt{n}}\right\rfloor-1$ must be either a minima or a maxima (since $\text{f(x)}$ is symmetric in x).  At $x=\left\lfloor{\sqrt{n}}\right\rfloor-1$, we have:
\begin{align*}
\lfloor{\text{f(x)}}\rfloor={}&\left\lfloor\frac{n}{\left\lfloor{\sqrt{n}}\right\rfloor}+\frac{n}{\left\lfloor{\sqrt{n}}\right\rfloor}\right\rfloor-1 \\ ={}& \left\lfloor\frac{2n}{\left\lfloor{\sqrt{n}}\right\rfloor}\right\rfloor-1\\
\geq{}&  \left\lfloor\frac{2n}{\sqrt{n}}\right\rfloor-1\\
={}& \left\lfloor{2\sqrt{n}}\right\rfloor-1
\end{align*}

We note that $\left\lfloor\text{f(0)}\right\rfloor\geq{n}$, which is greater than or equal to $2\left\lfloor{\sqrt{n}}\right\rfloor-1$ for all $n\in\mathbb{N}$. Therefore, $\lfloor{\text{f(}\left\lfloor{\sqrt{n}}\right\rfloor-1\text{)}}\rfloor$ is a minima.

\noindent
\textbf{2.  Assume $\delta=1$:}

In this case, $\text{f}'\text{(x)}=0$ if and only if $x=\left\lfloor{\sqrt{n}}\right\rfloor-\frac{1}{2}$. At $x=\left\lfloor{\sqrt{n}}\right\rfloor-\frac{1}{2}$, we have:
\begin{align*}
\lfloor{\text{f(x)}}\rfloor={}&\left\lfloor\frac{n}{\left\lfloor{\sqrt{n}}\right\rfloor+\frac{1}{2}}+\frac{n}{\left\lfloor{\sqrt{n}}\right\rfloor+\frac{1}{2}}\right\rfloor-1\\
 ={}& \left\lfloor\frac{4n}{2\left\lfloor{\sqrt{n}}\right\rfloor+1}\right\rfloor-1\\
={}& \left\lfloor\frac{4n}{\left\lfloor{2\sqrt{n}}\right\rfloor}\right\rfloor-1\\
\geq{}&  \left\lfloor\frac{4n}{2\sqrt{n}}\right\rfloor-1\\
={}& \left\lfloor{2\sqrt{n}}\right\rfloor-1
\end{align*}

We note that $\left\lfloor\text{f(0)}\right\rfloor\geq{n}$, which is greater than or equal to $2\left\lfloor{\sqrt{n}}\right\rfloor-1$ for all $n\in\mathbb{N}$. Therefore, $\lfloor{\text{f(}\left\lfloor{\sqrt{n}}\right\rfloor-\frac{1}{2}\text{)}}\rfloor$ is a minima.

Therefore, for all $r_\alpha+s_\alpha\leq{\left\lfloor{2\sqrt{n}}\right\rfloor}-2$, we have that: $$R_\alpha+S_\alpha\geq\min\limits_{x\in\mathbb{R}_{\geq{}0}}\left\lfloor\text{f(x)}\right\rfloor\geq\left\lfloor{2\sqrt{n}}\right\rfloor-1.$$ Finally, it follows that $\text{max}\left\{r_\alpha+s_\alpha,\,R_\alpha+S_\alpha\right\}\geq{\left\lfloor{2\sqrt{n}}\right\rfloor}-1$ for all possible $r_\alpha$ and $s_\alpha$.
%\noindent
%\textit{(ii)}: We take $r_\alpha$, $s_\alpha$, $R_\alpha$ and $S_\alpha$ as before, though do not assume $r_\alpha,s_\alpha\geq{1}$. Note that ${r_\alpha+s_\alpha}$ and $R_\alpha+S_\alpha$ may both be zero, though not simultaneously. We can conclude that $B(\alpha)\geq\min\left\{{r_\alpha+s_\alpha},1\right\}$, $B(n\alpha)\geq\min\left\{{R_\alpha+S_\alpha},1\right\}$ and by extension $B(\alpha)\cdot{B(n\alpha)}\geq\min\left\{{r_\alpha+s_\alpha},1\right\}\cdot\min\left\{{R_\alpha+S_\alpha},1\right\}$
\end{proof}

The above lemma gives us a lower bound for $\max\left\{B(\alpha),B(n\alpha)\right\}$ if $\alpha$ is not an infinite loop mod $n$. Recall from Corollary~\ref{uplow2}, that for all $\alpha\in\mathbb{R}$ we have:
 $$\inf\limits_{\ell\in\mathbb{N}\cup\left\{0\right\}}\left\{\frac{1}{B(p^\ell\alpha)+2}\right\}\leq{}m_p(\alpha)\leq\inf\limits_{\ell\in\mathbb{N}\cup\left\{0\right\}}\left\{\frac{1}{B(p^\ell\alpha)}\right\}.$$ This leads to the following corollary:

\begin{cor}\label{infl} If $\alpha\in\mathbb{R}_{>0}$ is not an infinite loop mod $p^m$, then:
\[ m_p(\alpha)\leq{\frac{1}{\left\lfloor{2\sqrt{p^m}}\right\rfloor{-1}}}.\] 
\end{cor}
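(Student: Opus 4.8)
The plan is to derive Corollary~\ref{infl} directly from Lemma~\ref{noloop} (applied with $n=p^m$) together with the upper bound on $m_p$ recorded in Corollary~\ref{uplow2}. First I would dispose of the degenerate cases. If $\alpha$ is rational, then taking $q$ equal to the denominator of $\alpha$ gives $\|q\alpha\|=0$, hence $m_p(\alpha)=0$ and the conclusion is immediate (note that $\lfloor 2\sqrt{p^m}\rfloor-1\geq 1$ for every prime power, so the right-hand side is a genuine positive real number). Similarly, if $B(\alpha)=\infty$ or $B(p^m\alpha)=\infty$ — observe that $p^m\alpha$ is irrational whenever $\alpha$ is — then Theorem~\ref{uplow} applied along a subsequence of partial quotients tending to infinity forces the corresponding value of $c(\cdot)$ to vanish, and since $m_p(\alpha)=\inf_{\ell\in\mathbb{N}\cup\{0\}}c(p^\ell\alpha)\leq\min\{c(\alpha),c(p^m\alpha)\}$, again $m_p(\alpha)=0$.

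So I may assume that $\alpha$ and $p^m\alpha$ are irrational with $B(\alpha),B(p^m\alpha)<\infty$. Then Corollary~\ref{uplowcor} gives $c(\alpha)<1/B(\alpha)$ and $c(p^m\alpha)<1/B(p^m\alpha)$. Restricting the infimum defining $m_p(\alpha)$ to the two indices $\ell=0$ and $\ell=m$, I obtain
$$m_p(\alpha)\;\leq\;\min\{c(\alpha),\,c(p^m\alpha)\}\;\leq\;\min\left\{\frac{1}{B(\alpha)},\,\frac{1}{B(p^m\alpha)}\right\}\;=\;\frac{1}{\max\{B(\alpha),\,B(p^m\alpha)\}}.$$

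Finally I would invoke Lemma~\ref{noloop} with $n=p^m$: since by hypothesis $\alpha$ is not an infinite loop mod $p^m$, that lemma yields $\max\{B(\alpha),B(p^m\alpha)\}\geq\lfloor 2\sqrt{p^m}\rfloor-1$, and substituting this into the displayed chain of inequalities gives $m_p(\alpha)\leq 1/(\lfloor 2\sqrt{p^m}\rfloor-1)$, as claimed. There is no substantial obstacle here: the only points requiring a moment's care are the bookkeeping for the degenerate cases and the observation that one must feed \emph{both} $\ell=0$ and $\ell=m$ into the infimum defining $m_p$ — a single value of $\ell$ would control only one of $B(\alpha)$, $B(p^m\alpha)$, whereas Lemma~\ref{noloop} controls their maximum.
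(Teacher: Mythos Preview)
Your proof is correct and follows essentially the same route as the paper: apply Lemma~\ref{noloop} with $n=p^m$ to bound $\max\{B(\alpha),B(p^m\alpha)\}$ from below, then feed the indices $\ell=0$ and $\ell=m$ into the upper bound $m_p(\alpha)\leq\inf_\ell 1/B(p^\ell\alpha)$ from Corollary~\ref{uplow2}. The only difference is cosmetic: you treat the degenerate cases ($\alpha$ rational, or one of the heights infinite) explicitly, whereas the paper absorbs these into its blanket invocation of Corollary~\ref{uplow2}.
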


\begin{proof} Since we know that:
$$m_p(\alpha)\leq{}\inf\limits_{\ell\in\mathbb{N}\cup\left\{0\right\}}\left\{\frac{1}{B(p^\ell\alpha)}\right\},$$
we can conclude that $$m_p(\alpha)\leq\frac{1}{B(p^j\alpha)},$$ for any $j\in\mathbb{N}\cup\left\{0\right\}$. Since $\alpha$ is not an infinite loop mod $p^m$, we know by Lemma~\ref{noloop}, that: $$\max\left\{B(\alpha),B(p^m\alpha)\right\}\geq{\left\lfloor{2\sqrt{p^m}}\right\rfloor{-1}}.$$
Combining this information together, we see that:
$$m_p(\alpha)\leq\min\left\{\frac{1}{B(\alpha)},\frac{1}{B(p^m\alpha)}\right\}\leq{\frac{1}{\left\lfloor{2\sqrt{p^m}}\right\rfloor{-1}}},$$
as required.
\end{proof}

\begin{cor}\label{c}
Let $\alpha\in\textbf{Bad}$ and assume  there is some sequence of natural numbers $\left\{\ell_m\right\}_{m\in\mathbb{N}}$  such that $p^{\ell_m}\alpha$ is not an infinite loop mod $p^m$. Then $\alpha$ satisfies pLC.
\end{cor}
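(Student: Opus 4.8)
The plan is to deduce this directly from Corollary~\ref{infl}, together with the elementary fact that multiplying $\alpha$ by a power of $p$ can only increase $m_p$.

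First I would dispose of the trivial case: if $\alpha\notin\textbf{Bad}$ then $c(\alpha)=0$, so $m_p(\alpha)\leq c(\alpha)=0$ and $\alpha$ satisfies pLC; hence I may assume $\alpha\in\textbf{Bad}$ as in the statement. Using the translation-invariance of $m_p$ recorded at the end of Section~\ref{mlc2}, I may also assume $\alpha\in\mathbb{R}_{>0}$, so that each $p^{\ell_m}\alpha$ is positive and Corollary~\ref{infl} applies to it.

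Next I would record the monotonicity statement $m_p(\alpha)\leq m_p(p^{k}\alpha)$ for every $k\in\mathbb{N}\cup\{0\}$. This follows from the identity $m_p(\beta)=\inf_{\ell\in\mathbb{N}\cup\{0\}}c(p^\ell\beta)$ of Section~\ref{mlc2}: taking $\beta=p^{k}\alpha$ rewrites the right-hand side as \[ m_p(p^{k}\alpha)=\inf_{\ell\geq 0}c(p^{\ell+k}\alpha)=\inf_{j\geq k}c(p^{j}\alpha)\geq\inf_{j\geq 0}c(p^{j}\alpha)=m_p(\alpha), \] since this is an infimum over a subset of the index set defining $m_p(\alpha)$. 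The only point to be careful about is that this is genuinely an inequality — scaling by $p^{k}$ merely discards the first $k$ terms of the infimum — and need not be an equality.

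Finally, I would combine the pieces. Fix $m\in\mathbb{N}$. Since $p^{\ell_m}\alpha$ is not an infinite loop mod $p^m$, Corollary~\ref{infl} gives $m_p(p^{\ell_m}\alpha)\leq 1/(\lfloor 2\sqrt{p^m}\rfloor-1)$, and hence $m_p(\alpha)\leq 1/(\lfloor 2\sqrt{p^m}\rfloor-1)$ by the monotonicity above. As this holds for every $m$ and $\lfloor 2\sqrt{p^m}\rfloor-1\to\infty$ as $m\to\infty$, while $m_p(\alpha)\geq 0$, we conclude that $m_p(\alpha)=0$, i.e. $\alpha$ satisfies pLC. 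There is no real obstacle here: Corollary~\ref{infl} does all the work, and the only (easy) subtlety is the monotonicity of $m_p$ under multiplication by $p$.
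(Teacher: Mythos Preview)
Your proof is correct and follows essentially the same approach as the paper: both apply Corollary~\ref{infl} to $p^{\ell_m}\alpha$ and then let $m\to\infty$. The only cosmetic difference is that you package the passage from $m_p(p^{\ell_m}\alpha)$ to $m_p(\alpha)$ as an explicit monotonicity lemma $m_p(\alpha)\le m_p(p^k\alpha)$, whereas the paper absorbs this step into the inequality $m_p(\alpha)\le 1/B(p^j\alpha)$ for every $j$ coming from Corollary~\ref{uplow2}; these are the same observation in slightly different clothing.
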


%If pLC is false, one can ask if there is some $\epsilon<m_p(\alpha)$ and if so, what is the largest such value? We define $m_{PLC}(p):=\sup_{\alpha\in\mathbb{R}} m_p(\alpha)$ and note that saying $m_{PLC}(p)=0$ is equivalent to pLC. An immediate consequence of the above corollary is that if there are no infinite loops mod $p^n$ up to iterative multiplication/division by $p$, then for all $\alpha\in\mathbb{R}$:

%\[ m_{PLC}(p)<{\frac{1}{\left\lfloor{2\sqrt{p^n}}\right\rfloor{-1}}}\] 
\begin{proof}
From Corollary~\ref{infl}, we can conclude that for any $\alpha\in\mathbb{R}_{>0}$, if there is a sequence of natural numbers $\{\ell_m)\}_{m\in\mathbb{N}}$ such that $p^{\ell_m}\alpha$ is not an infinite loop mod $p^m$, then we have:  $$m_p(\alpha)\leq\lim\limits_{m\to\infty}\frac{1}{B(p^{\ell_m}\alpha)}\leq{}\lim\limits_{m\to\infty}{\frac{1}{\left\lfloor{2\sqrt{p^m}}\right\rfloor{-1}}}=0.$$ Therefore, $\alpha$ satisfies pLC. 
\end{proof}
Here, we should note that the sequence $\left\{\ell_m\right\}$ need not be monotonically increasing. For example, we may have that $\alpha$ is an infinite loop mod $p$, but there exist some $K\in\mathbb{N}$, such that $p^K\alpha$ is not an infinite loop mod $p^m$ for all $m\in\mathbb{N}$. In this case, the sequence $l_m=K$ for all $m\in\mathbb{N}$ would allow us to show that $\alpha$ satisfies $p$LC. We omit the proof here, but  such a constant exists for all real numbers with an eventually recurrent continued fraction expansion - by extension this is also true for all real numbers which admit an eventually periodic continued fraction expansion. See \cite{paper,thesis}. In particular, for every real number $\alpha$ and  every prime $p$, there exists some integer $K\in\mathbb{N}$ such that $p^K\alpha$ admits a semi-convergent denominator divisible by $p^m$, for every possible $m\in\mathbb{N}$.

In contrast to Corollary~\ref{c}, if there exists an $m\in\mathbb{N}$ such that $p^\ell\alpha$ is an infinite loop mod $p^m$, for all $\ell\in\mathbb{N}\cup\left\{0\right\}$,  then $\alpha$ is a counterexample to pLC.

\begin{lem}\label{count}
Let $\alpha\in\textbf{Bad}$ and assume there exists an $m\in\mathbb{N}$ such that $p^\ell\alpha$ is an infinite loop mod $p^m$, for all $\ell\in\mathbb{N}\cup\left\{0\right\}$. Then $\alpha$ is a counterexample to pLC and $m_p(\alpha)\geq{\frac{1}{p^m-2}}$.
\end{lem}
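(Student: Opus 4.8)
The plan is to derive both conclusions from the single quantitative estimate
$$B(p^{\ell}\alpha)\ \le\ p^{m}-4\qquad\text{for every }\ell\in\mathbb{N}\cup\{0\},$$
using Corollaries~\ref{uplowcor} and \ref{uplow2}. Since $p^{j}(p^{\ell}\alpha)=p^{j+\ell}\alpha$ is again an infinite loop mod $p^{m}$ for every $j\ge 0$ under the hypothesis, the first step is to reduce to the following statement and then apply it with $\beta=p^{\ell}\alpha$ for each $\ell$:

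\emph{Claim.} If $\beta\in\mathbb{R}_{>0}$ and $p^{j}\beta$ is an infinite loop mod $p^{m}$ for every $j\in\mathbb{N}\cup\{0\}$, then $B(\beta)<p^{m}-4$.

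I would prove the Claim by contraposition: suppose $\beta$ has a partial quotient $a_{k+1}\ge p^{m}-4$. By Corollaries~\ref{semiconvedge} and \ref{edgesemiconv}, the corresponding fan of $\zeta_{\beta}$ in $\mathcal{F}$ has the convergent $\tfrac{p_{k}}{q_{k}}$ as fixed vertex, and the non-fixed endpoints of the $a_{k+1}+1$ edges crossed there are exactly the semi-convergents $\tfrac{p_{\{k,i\}}}{q_{\{k,i\}}}$, $0\le i\le a_{k+1}$, whose denominators $q_{\{k,i\}}=q_{k-1}+i\,q_{k}$ run through an arithmetic progression with common difference $q_{k}$. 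If $p\nmid q_{k}$, then $q_{k}$ is invertible modulo $p^{m}$, so $i\mapsto q_{k-1}+i\,q_{k}$ is a bijection of $\mathbb{Z}/p^{m}\mathbb{Z}$, and once $a_{k+1}$ is large enough the index $i_{0}$ with $q_{k-1}+i_{0}q_{k}\equiv 0$ falls into $\{0,\dots,a_{k+1}\}$, producing a semi-convergent denominator of $\beta$ divisible by $p^{m}$ and contradicting that $\beta$ is an infinite loop mod $p^{m}$; quantifying this, together with the sharper constraint obtained when $q_{k-1}$ is itself divisible by $p$, bounds $a_{k+1}$. If instead $p\mid q_{k}$, write $q_{k}=p^{b}q_{k}'$ with $p\nmid q_{k}'$; the hypothesis forces $p^{m}\nmid q_{k}$, so $b\le m-1$, and repeatedly applying Proposition~\ref{pro2} with $n=p$ (multiplying by $p$ as long as the relevant convergent denominator exceeds $p$) transports this fan to a partial quotient of $p^{b}\beta$ — or of $p^{b-1}\beta$ in the degenerate case $q_{k}=p^{b}$ — that is at least $p^{b}a_{k+1}$ (resp.\ $p^{b-1}a_{k+1}$) and sits at a convergent with denominator coprime to $p$ (resp.\ equal to $p$); since $p^{b}\beta$ is also an infinite loop mod $p^{m}$, the first case applies to it, and following the degenerate situation through one further multiplication (where $p\cdot\tfrac{p_{k}}{p}$ is an integer and the resulting fan abuts the initial fan of $p^{b}\beta$) closes the remaining gap. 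Tracking the constants through both cases, including the finitely many small-index exceptions near the start of the expansion, yields $a_{k+1}<p^{m}-4$, contrary to the choice of $k$, which proves the Claim.

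With the Claim established, $\sup_{\ell}B(p^{\ell}\alpha)\le p^{m}-4<\infty$, so by Corollary~\ref{uplow2} $\alpha$ does not satisfy pLC, i.e.\ $\alpha$ is a counterexample; and Corollary~\ref{uplowcor} gives, for every $\ell$,
$$c(p^{\ell}\alpha)\ >\ \frac{1}{B(p^{\ell}\alpha)+2}\ \ge\ \frac{1}{(p^{m}-4)+2}\ =\ \frac{1}{p^{m}-2},$$
so that $m_p(\alpha)=\inf_{\ell}c(p^{\ell}\alpha)\ge\frac{1}{p^{m}-2}$, as required.

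The hard part will be the $p\mid q_{k}$ case of the Claim, and more precisely the bookkeeping needed to reach the clean constant $p^{m}-4$: keeping control of the convergent denominators of the $p^{j}\beta$ generated by iterating Proposition~\ref{pro2}, handling the degenerate situations where such a denominator becomes a pure power of $p$ (so that Proposition~\ref{pro2} no longer applies), and checking that the pigeonhole step can indeed be rerun for $p^{j}\beta$ with the amplified partial quotient. By contrast, the pigeonhole estimate for $p\nmid q_{k}$, the reduction in the first paragraph, and the final deduction via Corollaries~\ref{uplowcor} and \ref{uplow2} are routine.
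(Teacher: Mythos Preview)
Your overall plan coincides with the paper's: the same Claim, the same two-case split on $\gcd(q_k,p)$, Case~II reduced to Case~I by multiplying through by a power of $p$, and the final deduction via Corollary~\ref{uplow2}. But the route to the sharp constant $p^m-4$ is not where you locate it. Your progression argument in Case~I gives only $a_{k+1}\le p^m-2$, and the allusion to ``a sharper constraint when $q_{k-1}$ is divisible by $p$'' does not recover the missing $-2$. What the paper actually uses is that two \emph{additional} semi-convergents of $\beta$ --- one each from the adjacent fans --- are also Farey neighbours of $\frac{p_k}{q_k}$: in your notation their denominators are $q_k-q_{k-1}=q_{\{k-1,a_k-1\}}$ and $q_{k+1}+q_k=q_{\{k+1,1\}}$, occupying indices $i=-1$ and $i=a_{k+1}+1$ in the extended progression $q_{k-1}+iq_k$. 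Requiring those, too, to be $\not\equiv 0\pmod{p^m}$ gives $a_{k+1}+3$ consecutive nonzero residues, hence $a_{k+1}\le p^m-4$.

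Your Case~II, by contrast, is overcomplicated rather than incomplete. Instead of iterating Proposition~\ref{pro2} with $n=p$ and tracking degenerate tails, the paper passes to $p^j\beta$ in a single step, where $p^j\Vert q_k$: since every Farey neighbour of $\frac{p_k}{q_k}$ has denominator coprime to $p^j$, every edge of the current fan already lies in $\mathcal{F}\cap\frac{1}{p^j}\mathcal{F}$, so $\frac{p_k}{q_k/p^j}$ is a convergent of $p^j\beta$ carrying a partial quotient at least $p^j a_{k+1}$, and Case~I applies directly because $\gcd(q_k/p^j,p)=1$. No iteration, and no degenerate case to chase.
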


In order to prove this statement, we will first prove the following claim:

\noindent
\textbf{Claim:} If $\beta\in\mathbb{R}_{>0}$ is a real number  such that $p^j\beta$ is an infinite loop mod $p^m$ for all $j\in\mathbb{N}\cup\left\{0\right\}$, then $b_{k+1}\leq{p^m-4}$ for all $k\in\mathbb{N}\cup\{0\}$, where $\overline{\beta}=[b_0;b_1,\ldots]$. In particular, we can then conclude that $B(\beta)\leq{p^m-4}$.

\begin{proof}[Proof of claim]
Let $b_{k+1}$ be an arbitrary partial quotient of $\beta$ for some $k\in\mathbb{N}\cup\{0\}$ and consider the following two cases for the corresponding convergent denominator $q_k$:\\

\hspace{10mm}\textbf{(Case I):} The prime $p$ and $q_{k}$ are coprime.\\

\hspace{10mm}\textbf{(Case II):} The prime $p$ and $q_{k}$ are not coprime.\\

\noindent
\textbf{(Case I):} Since $q_{k}$ is coprime with $p$, we know that there are infinitely many neighbours of $\frac{p_{k}}{q_{k}}$ in $\mathcal{F}$ which have a denominator divisible by $p^m$. This is analogous to the fact the $0$ has infinitely many neighbours of the form $\frac{1}{p^mj}$ in $\mathcal{F}$, where $j\in\mathbb{N}$. The corresponding geodesic ray $\zeta_{\beta}$ must not intersect any of the geodesic arcs from $\frac{p_{k}}{q_{k}}$ to these neighbours. As a result, there is a unique pair of neighbours in $\mathcal{F}$, $\frac{a_1}{c_1p^m}$ and $\frac{a_2}{c_2p^m}$, such that the arcs between these points and $\frac{p^\ell_{k}}{q^\ell_{k}}$ separate $\zeta_{\beta}$ from all other neighbours of $\frac{p_{k}}{q_{k}}$ in $\mathcal{F}$ whose denominator divisible by $p^m$. See Figure~\ref{counter}.

\begin{figure}[htb]
\centering
  \includegraphics[width=0.8\linewidth]{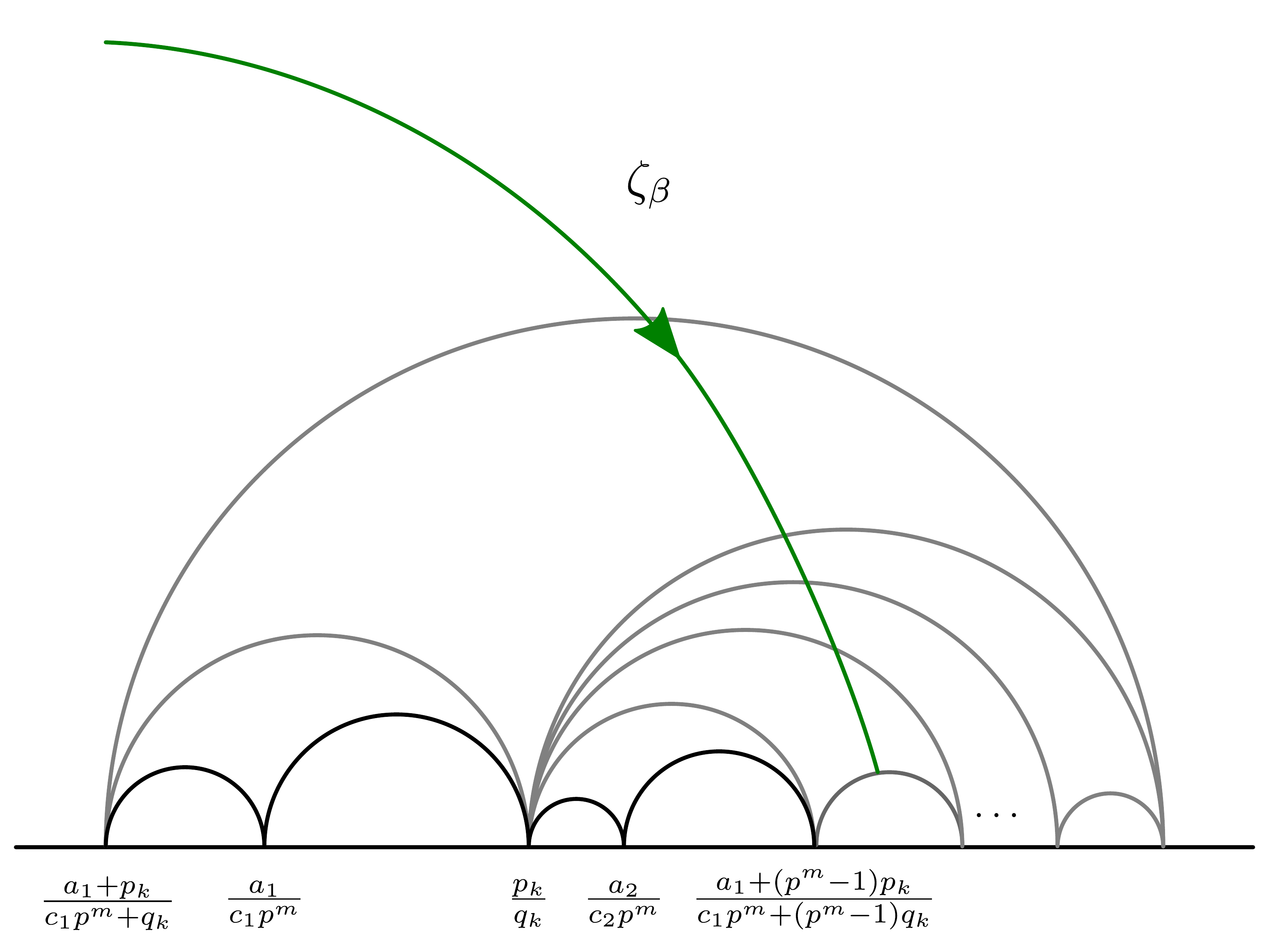} 
  \caption{An image of $\zeta_{\beta}
$ cutting a fan (relative to $\mathcal{F}$) with fixed point $\frac{p_k}{q_k}$ and $\gcd(p,q_k)=1$. In this scenario, $\zeta_{\beta}$ is an infinite loop mod $p^m$ and forms the largest fan possible for this $b_{k+1}$.}  \label{counter}
\end{figure}

Similarly, we can express all other neighbours of $\frac{p_{k}}{q_{k}}$ in $\mathcal{F}$ in this region by using either Farey addition or Farey subtraction  on $\frac{p_{k}}{q_{k}}$ and $\frac{a_1}{c_1p^m}$ (up to relabelling). In the case that we have to do Farey subtraction, we can replace the representation of $A=\frac{p_k}{q_k}$ with $A=\frac{-p_k}{-q_k}$ and do Farey addition instead. In either case, we can express the neighbours in this region as: 
\[
n_i=\frac{a_1+i\cdot{p^\ell_{k}}}{c_1p^m + i\cdot{q^\ell_{k}}}
,\]  where  $i\in\left\{0,1,\ldots,p^m\right\}$ \text{ and } $n_0=\frac{a_1}{c_1p^m}$ \text{ and } $n_{p^m}=\frac{a_2}{c_2p^m}$.

Two of these neighbours will be fixed vertices for the previous and subsequent fans, and we label these neighbours as $n_s$ and $n_t$ with $t>s$. The size of the fan $b_{k+1}$ is given by $t-s$. The points $\frac{p_{k}}{q_{k}},n_{s}$ and $n_{s-1}$ form a triangle in $\mathcal{F}$, and so, since $n_s$ is a convergent denominator of $\zeta_{\beta}$, the point $n_{s-1}$ must be a semi-convergent of $p^\ell\alpha$. Similarly, since $n_t$ is  the convergent of the next partial quotient, the point $n_{t+1}$ is a semi-convergent of $\beta$. If either $n_0$ or $n_{p^m}$ are semi-convergents of $\beta$, then, since they are of the form $\frac{A}{Cp^m}$ with $C\neq{0}$, we can conclude that $\beta$ is not an infinite loop mod $p^m$. It follows that for $\zeta_\beta$ to be an infinite loop mod $p^m$, we have $s\in\left\{2,\ldots,p^m-3\right\}$ and $t\in\left\{3,\ldots,p^m-2\right\}$. Therefore, the maximum size of the fan is $b_{k+1}$ is given by $(\max{t}-\min{s})=p^m-2-2=p^m-4$, as required.  \hfill\textit{QED.}

%
%Alternatively, for a slightly worse bound we can note that every semi-convergent denominator is of the form:
%
%$$q_{\left\{k,m\right\}}=xq_{k}+q_{k-1},$$
% where $0\leq{}m\leq{}a_{k+1}$. If there is some value of $m$ in the range $\left\{0,1,\ldots,a_{k+1}\right\}$ such that $$q_{\left\{k,m\right\}}=mq_{k}+q_{k-1}\equiv{0}\mod{p^m}$$, then this implies that $\alpha$ is not an infinite loop mod $p^m$. We know that the linear congruence: $$ ax\equiv b \mod{n}$$
% has a solution for $x\in\left\{1,\ldots,n-1\right\}$ if and only if $\gcd(a,n)\mid{b}$. Since we assumed that $\gcd(q_k,p^m)=\gcd(q_k,p)=1$ and $1\mid{q_{k-1}}$, the equation $$xq_{k}\equiv{q_{k-1}}\mod{p^m}$$ has a solution for $x\in\left\{1,\ldots,p^m-1\right\}$. As a result, if $a_{k+1}>p^m-1$ we can guarantee that there is some value $x\in\left\{1,\ldots,p^m-1,\ldots,a_{k+1}\right\}$ such that $$xq_{k}\equiv{q_{k-1}}\mod{p^m}.$$
%By rearranging the equation, this implies that the semi-convergent denominator $q_{k,x}\equiv0\mod{p^m}$ since:
% $$q_{\left\{k,x\right\}}=xq_{k}+q_{k-1}\equiv{0}\mod{p^m}.$$
% 
% As a result, we can conclude that $a_{k+1}\leq{p^m}-2<p^m-1$ whenever $\gcd(q_k,p)=1$.

\noindent
\textbf{(Case II):} In this case, there is some $j\in\mathbb{N}$ such that $p^j\mid{q_{k}}$ and $p^{j+1}\nmid{q_{k}}$. We will write $q_k=p^jd_k$, where $d_k\in\mathbb{N}$ and $\gcd(q'_k,p)=1$.  Therefore, by Proposition~\ref{pro2}, we can deduce that $B(p^j\beta)\geq{}p^jb_{k+1}$ and ${p_k}{d_k}$ is a convergent of $p^j\beta$. We wish to show that if $b^j_{k'+1}$ is the partial quotient of $p^j\beta$ corresponding to the convergent ${p_k}{d_k}$, then we have ${b_{k+1}\cdot{p^j}}\leq{b^{j}_{k'+1}}$. Since $\gcd(d_k,p)=1$, we can use Case I to conclude that $b_k<b^j_{k'+1}\leq{p^m-4}$.

The geodesic ray $\zeta_{\beta}$ forms a fan $B_{k+1}$  with $\mathcal{F}$ of size $b_{k+1}$ and this fan has a fixed vertex $\frac{p_{k}}{q_{k}}$. Since $p^j\mid{q_{k}}$, any neighbour $\frac{a}{c}$ of $\frac{p_k}{q_k}$ in $\mathcal{F}$ must satisfy $\gcd(c,p^j)=1$. By Lemma~\ref{lem1}, the edge between a neighbour of this form and $\frac{p_k}{q_k}$ must be an edge of $\mathcal{F}\cap{\frac{1}{p^j}\mathcal{F}}$. Therefore, since every edge that $\zeta_\beta$ intersects in $B_{k+1}$ has $\frac{p_{k}}{q_{k}}$ as one of its endpoints, we can conclude that each of these edges lie in $\mathcal{F}\cap{\frac{1}{p^j}\mathcal{F}}$. Since these edges lie in $\mathcal{F}\cap{\frac{1}{p^j}\mathcal{F}}$, we can guarantee that $\frac{p_k}{q_k}$ is a fixed point of some fan $B^j_{k'+1}$ in the cutting sequence $(\zeta_\beta,\frac{1}{p^j}\mathcal{F})$. After having corrected for scaling, we can observe that $\frac{p_{k}}{d_{k}}$ is a convergent of $p^j\beta$, where $q_{k}=d_{k}\cdot{p^j}$, as above. Each triangle in $B_k$ is sub-divided into $p^j$ triangles when we replace $\mathcal{F}$ with $\frac{1}{p^j}\mathcal{F}$, as described in Proposition~\ref{pro2}. Therefore, if $b^{j}_{k'+1}$ is the partial quotient of $p^j\beta$ corresponding to the fan $B^j_{k'+1}$ - with corresponding convergent $\frac{p_{k}}{d_{k}}$ - then $b^{j}_{k'+1}$ satisfies: $${{p^j}\cdot{}b_{k+1}}\leq{}b^{j}_{k'+1}.$$ Since $gcd(d_{k},p)=1$, we can use Case I to see that $b^{j}_{k'+1}<p^m-4$. However, since ${{p^j}\cdot{}b_{k+1}}\leq{b^{j}_{k'+1}}$, we can conclude ${b_{k+1}}\leq{p^m-4}$, as required.\hfill\textit{QED.}

Finally, since the partial quotients $b_{k+1}$ of $\beta$ are all bounded above by $p^m-4$, we can conclude that $B(\beta)\leq{p^m-4}$ and this completes the proof of the claim.
\end{proof}

\begin{proof}[Proof of Lemma~\ref{count}]
For each $\ell\in\mathbb{N}\cup\{0\}$, $p^\ell\alpha$ is not an infinite loop mod $p^m$. Therefore $p^{\ell+j}\alpha$ is also an infinite loop mod $p^m$ for all $j\in\mathbb{N}\cup\left\{0\right\}$. As a result, we can replace $\beta$ in the above claim by $p^\ell\alpha$, and we see that $B(p^\ell\alpha)\leq{p^m-4}$, for all $\ell\in\mathbb{N}\cup\left\{0\right\}$.  As seen in Corollary~\ref{uplow2}, we know that: $$m_p(\alpha)\geq{}\inf\limits_{\ell\in\mathbb{N}\cup\left\{0\right\}}\frac{1}{B(p^\ell\alpha)+2}.$$ Finally, we can conclude that: $$m_p(\alpha)\geq{}{\frac{1}{p^m-2}}.$$
\end{proof}
Combining Corollary~\ref{infl} and Lemma~\ref{count} gives us the main theorem for this paper.

\begin{thm}\label{theorem}
Let $\alpha\in\textbf{Bad}$. Then
$\alpha$ satisfies pLC if and only if there is a sequence of natural numbers $\left\{\ell_m\right\}_{m\in\mathbb{N}}$ such that $p^{\ell_m}\alpha$ is not an infinite loop mod $p^m$.
\end{thm}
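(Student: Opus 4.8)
The plan is to deduce the theorem directly from the two results already assembled in this section, handling the two implications separately; each is essentially a restatement of a lemma or corollary proved above.

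For the ``if'' direction, suppose there is a sequence $\{\ell_m\}_{m\in\mathbb{N}}$ of natural numbers such that $p^{\ell_m}\alpha$ is not an infinite loop mod $p^m$ for every $m$. Then Corollary~\ref{c} applies verbatim and yields that $\alpha$ satisfies pLC. Recall that Corollary~\ref{c} is itself obtained from Lemma~\ref{noloop} through Corollary~\ref{infl}, so this half of the theorem carries the quantitative content $m_p(\alpha)\le 1/(\lfloor 2\sqrt{p^{m}}\rfloor-1)$ for each $m$, which forces $m_p(\alpha)=0$ on letting $m\to\infty$. Nothing further is needed here, and note that the $\ell_m$ need not be monotone, as already observed after Corollary~\ref{c}.

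For the ``only if'' direction I would argue by contraposition. Assume that no sequence $\{\ell_m\}_{m\in\mathbb{N}}$ with the stated property exists. Since the admissible choices of $\ell_m$ for distinct indices $m$ are completely unconstrained, such a sequence can be built coordinate by coordinate; its non-existence therefore means that for at least one index $m_0\in\mathbb{N}$ there is no $\ell\in\mathbb{N}\cup\{0\}$ for which $p^\ell\alpha$ fails to be an infinite loop mod $p^{m_0}$. Equivalently, $p^\ell\alpha$ is an infinite loop mod $p^{m_0}$ for every $\ell\in\mathbb{N}\cup\{0\}$. This is precisely the hypothesis of Lemma~\ref{count}, whose conclusion gives $m_p(\alpha)\ge 1/(p^{m_0}-2)>0$; hence $\alpha$ is a counterexample to pLC. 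This establishes the contrapositive and completes the proof.

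The only step requiring any care is the elementary logical manipulation in the reverse direction — converting ``there is no good sequence'' into ``some fixed modulus $p^{m_0}$ is bad for all powers of $p$'' — and this is immediate from the independence of the coordinates of the sequence. All of the genuine mathematical work has already been carried out in Lemma~\ref{noloop} and in the Case~I/Case~II analysis behind Lemma~\ref{count}, so I do not anticipate any further obstacle.
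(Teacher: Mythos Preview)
Your proposal is correct and follows essentially the same approach as the paper, which simply records that the theorem is obtained by combining Corollary~\ref{infl} (through Corollary~\ref{c}) with Lemma~\ref{count}. Your explicit unpacking of the contrapositive in the ``only if'' direction---reducing the non-existence of the sequence to the existence of a single bad modulus $p^{m_0}$---is exactly the logical step the paper leaves implicit.
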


\subsection*{Funding and Acknowledgements}
This research was supported by the Engineering and Physical Sciences Research Council (EPSRC) [grant no. EP/N509462/1], awarded through Durham University.\\

\noindent
I would like to thank my Ph.D. supervisor, Dr. Anna Felikson, for her continued support and encouragement throughout the project. I would also like to thank Matthew Northey for his helpful discussions surrounding this topic.

\nocite{*}
\printbibliography
\end{document}